\pgfplotsset{compat=1.18}
\newtheorem{theorem}{Theorem}[section]
\newtheorem{corollary}[theorem]{Corollary}
\newtheorem{proposition}[theorem]{Proposition}
\newtheorem{lemma}[theorem]{Lemma}
\newtheorem{claim}{Claim}[theorem]
\theoremstyle{definition}
\newtheorem{remark}[theorem]{Remark}
\newtheorem{question}[theorem]{Question}
\newtheorem{definition}[theorem]{Definition}
\newtheorem{example}[theorem]{Example}
\newtheorem{problem}{Problem}
\def\ie{{\em i.e.,} }
\def\eg{{\em e.g.} }
\newfont\bbf{msbm10 at 12pt}
\def\eps{\varepsilon}
\def\phi{\varphi}
\def\R{{\mathbb R}}
\def\N{{\mathbb N}}
\def\Z{{\mathbb Z}}
\def\red{\textcolor{red}}
\def\blue{\textcolor{blue}}
\def\black{\textcolor{black}}
\def\theta{\vartheta}
\def\Ent{\mbox{\rm Ent}}
\begin{document}
	
	\title{Euclidean-type algorithm for functions with same or similar origin}
	
	\author{Ana\ Anu\v si\' c and Christopher Mouron}
	\address[A.\ Anu\v{s}i\'c]{University of Zagreb Faculty of Electrical Engineering and Computing, Department of Applied Mathematics, Unska 3, 10000 Zagreb, Croatia}
	\email{aanusic@fer.unizg.hr}
	\address[C.\ Mouron]{Rhodes College, 2000 North Parkway, Memphis, TN 38112, USA}
	\email{mouronc@rhodes.edu}
	\thanks{AA was supported by the grant HRZZ-IP-2022-10-9820.}
	\date{\today}
	
	\subjclass[2010]{26A21, 
		37E05, 
		54C60 
		37B40, 
		37E05, 
		37B45, 
	}
	\keywords{commuting functions, cousins, kin, set-valued maps, topological entropy, inverse limit space}
	
	\begin{abstract}
		Maps $f,g\colon X\to X$ are called kin if they are forward iterates of the same map $\varphi\colon X\to X$, up to a composition with a commuting homeomorphism. Kin form an important class of commuting maps on $X$. In this paper, we characterize kin, and give an Euclidean-type algorithm which tests when two maps $f,g\colon X\to X$ are kin. Furthermore, we compute the topological entropy of diagonal maps induced by commuting diagonal kin diagrams.  
	\end{abstract}

	\maketitle
	
	\section{Introduction}
	
	Functions $f,g \colon X \to X$ are said to \emph{commute} if $f \circ g(x) = g \circ f(x)$ for all $x \in X$. In this paper we introduce and study a special class of commuting functions, which we call \emph{kin}. We work primarily with \emph{continua}--that is, compact, connected metric spaces $X$, and with continuous onto functions on $X$, which we refer to simply as \emph{maps}. This setting is chosen mainly for convenience, as most of our results extend to considerably more general spaces and classes of functions.
	
	The structure of commuting maps remains poorly understood, even on simple spaces such as the unit interval $I=[0,1]$. Pairs of maps rarely commute, and explicit constructions are either trivial, or notoriously difficult. Recently, however, there has been growing interest in developing methods for constructing commuting pairs (in particular, on simple spaces such as trees), motivated in part by the need for examples of dynamical systems on topological spaces that exhibit specific desirable properties. For example, applications of commuting maps on trees appear (see \cite{Bellamy80, OvRog, OvRog2, HoHerGut}) in questions related to the {\em planar fixed point property} problem, which is considered to be one of the most important open problems in planar topology \cite{Bing69, Hagopian07}.
	
	In this paper we continue to pursue a systematic study of commuting maps of the unit interval $I = [0,1]$ that are additionally \emph{piecewise monotone}, initiated in \cite{AM-commuting}. There, we classified all pairs of commuting piecewise monotone interval maps under an additional technical condition, which we called \emph{strong commutativity}. Piecewise monotone, commuting interval maps are arguably the simplest non-trivial maps on continua. Our hope is that classifying such pairs will provide further insight and offer a potential approach toward understanding commutativity of maps on more general spaces (such as trees). 
	
	\begin{problem}
		Classify all piecewise monotone commuting maps $f,g \colon I \to I$. 
		What if we additionally assume that $f$ and $g$ are piecewise linear?
	\end{problem}
	
	The only currently known pairs of commuting piecewise monotone maps on the interval are:
	\begin{enumerate}
		\item pairs of symmetric tent maps $(T_n, T_m)$ (see Figure~\ref{fig:tents}),
		\item pairs $(\varphi,h)$ consisting of a piecewise monotone interval map $\varphi$ and a homeomorphism $h$ that commutes with $\varphi$ (see Figure~\ref{fig:Anna}),
		\item pairs $(f,g)$ that are kin (see the definition below),
	\end{enumerate}
	as well as certain pairs $(f,g)$ that are semi-conjugate to the three classes above (under the same semi-conjugacy), as described in \cite{Folkman}, or \cite[Figure~15]{AM-commuting}, or certain ``diagonal'' or ``anti-diagonal'' combinations of maps from the classes above, as illustrated in \cite[Figures~12--14]{AM-commuting}. More about previous and current research on commuting interval maps can be found in \cite{Ritt, Baxter, Isbell, Boyce, Huneke, Cohen, Joichi, Folkman, McDowell, AliKoo, CanovasLinero, AM-commuting, RFBrown}. Also, other recent research on commuting maps asks about the existence of other common periodic points of commuting interval maps (see \eg \cite{AliKoo}, \cite{CanovasLinero}), or assumes $X$ is a more complicated space, \eg a square $I^2$ (see \cite{GrincSnoha},\cite{Linero}), or  an arc-like continuum (see \cite{Boronski}, \cite{Mou}). Overview of other contemporary research in the area can be found in \cite{RFBrown}.
	
	The first focus of the paper is to understand the properties of commuting maps that we call \emph{kin}. Kin slightly generalize the (obviously) commuting pairs of the form $(\varphi^n,\varphi^m)$, for some some map $\varphi\colon X\to X$ (these pairs are called {\em cousins}). To be more precise, maps $f,g\colon X\to X$ are called {\em kin} if there is a map $\varphi\colon X\to X$, a homeomorphism $h\colon X\to X$ that commutes with $\varphi$, and $a,b\in\N\cup\{0\}$, $n,m\in\N$ such that $f=h^a\circ \varphi^n$, $g=h^b\circ \varphi^m$. Cousins can be interpreted as different states of some dynamical system $(X,\varphi)$, so that cousins $f$ and $g$ can be viewed as maps sharing the same ``origin". Kin do not share the same ``origin" (unless they are cousins), but they possess a similar origin in the sense that the iterates of $\varphi$ are only mildly perturbed by a homeomorphism.
	
	Maps that are kin obviously commute. However, given commuting maps $f,g$, there does not appear to be an obvious way to determine whether they are kin (or cousins). Suppose we (or a computer) find a pair of commuting maps on $X$. How can we decide whether these maps are kin (or cousins), \ie whether they fall into a trivial class of commuting maps? In this paper, we propose an Euclidean-type algorithm (see Theorem~\ref{thm:cousins}) that detects when two commuting maps are kin. For piecewise monotone interval maps, this algorithm always terminates in finitely many steps, which makes its implementation practically useful.
	
	The second focus of the paper is on extending the entropy results we obtained in \cite{AM-entropy}. Given a continuum $X$ and a map $f\colon X \to X$, we form a new continuum, called the \emph{inverse limit} of $(X,f)$, defined by
	$$\varprojlim(X,f) := \{(x_0, x_1, x_2, \ldots) \in X^{\mathbb{N}} : f(x_i) = x_{i-1} \text{ for all } i \in \mathbb{N}\},$$
	considered as a subspace of $X^{\mathbb{N}}$ equipped with the product topology.  
	If $g \colon X \to X$ commutes with $f$, then we may define a ``diagonal" map 
	$G \colon \varprojlim(X,f) \to \varprojlim(X,f)$,  
	$$G((x_0,x_1,x_2,\ldots)) := (g(x_1), g(x_2), \ldots).$$
	Thus we obtain a new dynamical system $(\varprojlim(X,f), G)$, where the space $\varprojlim(X,f)$ is typically a much more complicated continuum than $X$, while the map $G$ is described entirely in terms of maps on the (usually simpler) space $X$. In \cite{AM-entropy}, we showed that if $f,g\colon X\to X$ are piecewise monotone, strongly commuting maps, then the topological entropy, \ie the measure of complexity of the dynamical system $(\varprojlim(X,f), G)$, can be completely understood in terms of the maps $f$ and $g$. Precisely, $\Ent(G)=\Ent(g\circ f^{-1})$, and if $X=I$, then $\Ent(G)=\max\{\Ent(f), \Ent(g)\}$. In this paper we extend these results to diagonal maps $G$ induced by commuting maps which are kin. In Theorem~\ref{thm:ent} we show that, given kin $f=h^a\circ\varphi^n$, $g=h^b\circ\varphi^m$, the entropy of the dynamical system $(\varprojlim(X,f), G)$ is equal to $\Ent(G)=\Ent(h^{b-a}\circ\varphi^{m-n})$, and if $X=I$, then $\Ent(G)=|m-n|\cdot\Ent(\varphi)$.
	
	The paper is organized as follows. In Section~\ref{sec:prelim} we give preliminaries on commuting maps, set-valued functions, topological entropy, and interval maps. In Section~\ref{sec:kin} we define kin and cousins, give examples and a classification theorem, Theorem~\ref{thm:kin}. In Section~\ref{sec:alg} we give an algorithm that tests whether two commuting maps are kin (or cousins). The main result of this section is Theorem~\ref{thm:cousins}, which proves that the algorithm decides when two {\em non-homeomorphisms} on a general continuum $X$ are kin. We give examples, including the examples for which the algorithm never terminates (not piecewise monotone). In Subsection~\ref{subsec:homeo} we extend the algorithm to homeomorphisms on the interval and discuss why we had to restrict to the interval in this case. Finally, in Section~\ref{sec:entropy} we extend results on entropy of diagonal maps from \cite{AM-entropy} to diagonal maps induced by maps $f,g$ which are kin. In Theorem~\ref{thm:ent} we show that, given kin $f=h^a\circ\varphi^n$, $g=h^b\circ\varphi^m$, the entropy of the dynamical system $(\varprojlim(X,f), G)$ can be completely understood in terms of maps $\varphi$ and $h$. 		
	
	\section{Preliminaries}\label{sec:prelim}
	The background of this study comes from the study of topological spaces $X$ which are often additionally assumed to be {\em continua}, \ie compact, connected, metric spaces. We do not need such strong assumptions on spaces $X$ in this paper, except in Subsection~\ref{sec:prel_ent} and Section~\ref{sec:entropy}, where we discuss topological entropy of certain maps on continua. The main results in the first half of the paper actually do not require any additional structure on sets $X$ and the functions between them. However, to keep the paper consistent, we will assume that spaces $X$ are continua, and functions $f\colon X\to X$ are {\em maps}, \ie continuous functions.
	
	We say that two maps $f,g\colon X\to X$ {\em commute} if $f\circ g=g\circ f$. Given a map $f\colon X\to X$, and $x\in X$, we denote the preimage of $x$ under $f$ as $f^{-1}(x)=\{y\in X: f(y)=x\}$.
	
	\begin{lemma}\cite[Lemma~2.1]{AM-commuting}\label{lem:comm}
		Maps $f,g\colon X\to X$ commute if and only if $f(g^{-1}(x))\subseteq g^{-1}(f(x))$ for all $x\in X$.
	\end{lemma}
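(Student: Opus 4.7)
The plan is to prove the two directions by straightforward unpacking of the set-theoretic definitions, without any topology or continuity needed.

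For the forward direction, I would assume $f \circ g = g \circ f$ and pick an arbitrary $y \in f(g^{-1}(x))$. By definition of the image of a preimage, there exists $z \in X$ with $g(z)=x$ and $y = f(z)$. To show $y \in g^{-1}(f(x))$ I just need to verify $g(y)=f(x)$, and indeed $g(y) = g(f(z)) = f(g(z)) = f(x)$, where the middle equality uses commutativity. This gives the inclusion for every $x\in X$.

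For the converse, I would assume $f(g^{-1}(x)) \subseteq g^{-1}(f(x))$ holds for every $x\in X$ and check commutativity pointwise. Given an arbitrary $z \in X$, set $x := g(z)$, so that $z \in g^{-1}(x)$ and therefore $f(z) \in f(g^{-1}(x)) \subseteq g^{-1}(f(x)) = g^{-1}(f(g(z)))$. Unpacking the latter means precisely $g(f(z)) = f(g(z))$, which is what we need for all $z\in X$.

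There is essentially no technical obstacle here; the statement is a direct translation between two equivalent ways of expressing commutativity. The only point worth a brief remark is that the reverse direction does not actually require surjectivity of $g$: if $x$ is not in the image of $g$ then $g^{-1}(x)=\emptyset$ and the hypothesized inclusion is vacuous, but we only ever apply it at points of the form $x=g(z)$, which automatically lie in the image. So the proof plan is truly just these two short paragraphs of elementary unpacking.
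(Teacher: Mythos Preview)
Your proof is correct; both directions are handled by the obvious element-chasing, and your remark that surjectivity of $g$ is not needed for the converse is accurate. Note that the present paper does not actually give a proof of this lemma---it is quoted from \cite{AM-commuting}---so there is no in-paper argument to compare against, but the elementary unpacking you give is the standard one and almost certainly coincides with what appears in the cited reference.
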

	
	A {\em set-valued function} on $X$ is a function $F\colon X\to 2^X$, where $2^X$ denotes the set of all {\em non-empty} closed subsets of $X$. We say that $F$ is {\em onto} if for every $y\in X$ there is $x\in X$ such that $y\in F(x)$. If $F\colon X\to 2^X$ is such that $F(x)$ is degenerate (\ie a set containing a single point in $X$) for every $x\in X$, then we say that $F$ is {\em single-valued}. Assume that $F$ is single-valued and denote  by $F(x)=\{f(x)\}$ for every $x\in X$. Then $f\colon X\to X$ is a function, and we often identify $F$ with $f$.
	
	We say that a set-valued function $F\colon X\to 2^X$ is {\em upper semi-continuous} if the {\em graph} of $F$, \ie the set $\Gamma(F)=\{(x,y): x\in X, y\in F(x)\}$, is closed in $X\times X$.
	Assume $f,g\colon X\to X$ are maps such that $g$ is onto. If $X$ is a continuum, then $f\circ g^{-1}\colon X\to 2^X$ is an upper semi-continuous set-valued function on $X$. If $f\circ g^{-1}$ is single-valued, then it is a map. The behavior of a set-valued function $f\circ g^{-1}$ is a central topic in this paper. We begin with some preliminary lemmas.  
	
	\begin{lemma}\label{lem:iterate_map}
		Let $F\colon X\to 2^X$ be a set-valued function such that there is $x\in X$ for which $F(x)$ is non-degenerate. Assume that $F$ is onto. Then, for every $n\in\N$ there exists $x\in X$ such that $F^n(x)$ is non-degenerate.
	\end{lemma}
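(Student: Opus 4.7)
The plan is to prove the statement by induction on $n$, using ontoness to successively pull back the point at which the iterate is non-degenerate. The single observation that drives everything is monotonicity of $F$ in the set argument: if $A \subseteq B \subseteq X$, then $F(A) = \bigcup_{a \in A} F(a) \subseteq \bigcup_{b \in B} F(b) = F(B)$.

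For the base case $n=1$, the assertion is exactly the hypothesis: some $x_0 \in X$ has $F(x_0)$ non-degenerate. For the inductive step, suppose we have found $x_{n-1} \in X$ such that $F^n(x_{n-1})$ is non-degenerate. Since $F$ is onto, there exists $x_n \in X$ with $x_{n-1} \in F(x_n)$. Using associativity of composition and the monotonicity observation, I would then write
\[
F^{n+1}(x_n) \;=\; F^n(F(x_n)) \;\supseteq\; F^n(\{x_{n-1}\}) \;=\; F^n(x_{n-1}),
\]
which is non-degenerate by the inductive hypothesis; hence $F^{n+1}(x_n)$ is also non-degenerate, completing the induction.

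There isn't really a conceptual obstacle; the only mild point of care is the bookkeeping for set-valued iterates. One should verify that $F^{n+1} = F^n \circ F$ as set-valued functions (which follows from associativity of the ``union-of-images'' composition) and that a set containing a non-degenerate subset is itself non-degenerate (which is trivial, since it contains at least two distinct points). Once these are acknowledged, the induction runs in one line per step, and the proof is complete.
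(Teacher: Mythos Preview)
Your proof is correct and follows essentially the same inductive argument as the paper: use ontoness to pull back, then monotonicity of set-valued images to conclude that the next iterate contains a non-degenerate set. The only cosmetic difference is that the paper writes $F^{n+1}=F\circ F^n$ and pulls back through $F^n$ to the fixed witness $y$ with $F(y)$ non-degenerate, whereas you write $F^{n+1}=F^n\circ F$ and pull back one step through $F$ to the previously constructed $x_{n-1}$; both variants are equally valid.
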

	\begin{proof}
		Inductively, assume the claim is true for $n\in\N$, and note that $F^n$ is onto. Let $y\in X$ be such that $F(y)$ is non-degenerate, and let $x\in X$ be such that $y\in  F^n(x)$. Then $F^{n+1}(x)=F(F^n(x))\supseteq F(y)$, so $F^{n+1}(x)$ is non-degenerate.
	\end{proof}
	
	\begin{lemma}\label{lem:decompose}
		Let $\varphi\colon X\to X$ be an onto map, and let $a,b\in\Z$ be such that $\varphi^{a+b}\colon X\to 2^X$ is single-valued. Then $\varphi^{a+b}(x)\in\varphi^a(\varphi^b(x))$ for all $x\in X$. 
	\end{lemma}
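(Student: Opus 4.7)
The plan is to reduce the lemma to the set-theoretic inclusion
\[
\varphi^{a+b}(x)\subseteq \varphi^a(\varphi^b(x)) \quad \text{for all } x\in X,
\]
valid for arbitrary $a,b\in\Z$. Once this inclusion is proved, the lemma is immediate: under the single-valuedness hypothesis, $\varphi^{a+b}(x)$ is a singleton, and the inclusion places that single point inside $\varphi^a(\varphi^b(x))$, which is exactly the conclusion (using the usual identification between a singleton and its element).

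The inclusion itself is proved by cases according to the signs of $a$ and $b$, using ontoness of $\varphi$ as the key ingredient. When $a,b\ge 0$ (resp.\ $a,b\le 0$), the identity $\varphi^{a+b}=\varphi^a\circ\varphi^b$ holds on the nose as set-valued maps, by unfolding the definitions of iteration of $\varphi$ (resp.\ of $\varphi^{-1}$); no ontoness is needed. The essential cases are the mixed-sign ones, where I would handle $a\ge 0>b$ explicitly (the case $a<0\le b$ being symmetric). Write $b=-b'$ with $b'>0$ and split into two subcases according to the sign of $a+b$.

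In the subcase $a+b\ge 0$, the point $y:=\varphi^{a+b}(x)=\varphi^{a-b'}(x)$ is a single value. By ontoness of $\varphi$ the set $\varphi^{-b'}(x)=(\varphi^{b'})^{-1}(x)$ is non-empty, so I pick any $z$ with $\varphi^{b'}(z)=x$; then $\varphi^a(z)=\varphi^{a-b'}(\varphi^{b'}(z))=\varphi^{a-b'}(x)=y$, which shows $y\in\varphi^a(\varphi^b(x))$. In the subcase $a+b<0$, any $y\in\varphi^{a+b}(x)=(\varphi^{b'-a})^{-1}(x)$ satisfies $\varphi^{b'-a}(y)=x$; invoking ontoness of $\varphi$ (hence of $\varphi^a$), pick $z$ with $\varphi^a(z)=y$, so $\varphi^{b'}(z)=\varphi^{b'-a}(\varphi^a(z))=\varphi^{b'-a}(y)=x$, giving $z\in\varphi^b(x)$ and therefore $y=\varphi^a(z)\in\varphi^a(\varphi^b(x))$.

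The main obstacle is purely combinatorial: keeping track of the signs of $a$, $b$, and $a+b$, and being careful that the exponent identities such as $\varphi^a=\varphi^{a-b'}\circ\varphi^{b'}$ are applied only when the relevant exponents are non-negative (so they really are the honest single-valued iterates of $\varphi$). Ontoness of $\varphi$ enters only to guarantee the existence of the intermediate point $z$ needed to witness membership in the composition; no further assumption on $X$ or on $\varphi^{a+b}$ (beyond single-valuedness) is used.
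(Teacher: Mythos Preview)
Your argument is essentially correct and in fact establishes the stronger set-theoretic inclusion $\varphi^{a+b}(x)\subseteq\varphi^a(\varphi^b(x))$ for \emph{all} $a,b\in\Z$, with no hypothesis on $\varphi^{a+b}$ beyond ontoness of $\varphi$. One caution: the case $a<0\le b$ is not literally ``symmetric'' to $a\ge 0>b$, since swapping $a\leftrightarrow b$ yields $\varphi^{a+b}(x)\subseteq\varphi^b(\varphi^a(x))$, the wrong order of composition. That case is, however, \emph{easier} than the one you wrote out: writing $a=-a'$ with $a'>0$, one checks directly that $\varphi^{a'}(y)=\varphi^b(x)$ for every $y\in\varphi^{a+b}(x)$ (in either subcase $a+b\ge 0$ or $a+b<0$), so $y\in\varphi^{-a'}(\varphi^b(x))$; no choice of preimage, hence no appeal to ontoness, is needed here.

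The paper organizes the proof differently. It first uses the single-valuedness hypothesis together with Lemma~\ref{lem:iterate_map} to rule out $a+b<0$ when $\varphi$ is not a homeomorphism, and then in the mixed-sign case $a<0<b$ it first obtains $\varphi^{a+b}(x)\in\varphi^b(\varphi^a(x))$ by the genuine symmetry and invokes Lemma~\ref{lem:comm} (applied to the commuting pair $\varphi^b$, $\varphi^{-a}$) to swap the order of composition. Your route is more self-contained, avoiding both auxiliary lemmas and delivering a slightly more general inclusion; the paper's route is a bit shorter on the page but leans on the surrounding infrastructure.
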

	\begin{proof}
		If $\varphi$ is a homeomorphism, then obviously $\varphi^{a+b}=\varphi^a\circ\varphi^b$. Assume $\varphi$ is not a homeomorphism, so $\varphi^{-1}\colon X\to 2^X$ is set-valued, onto, and $\varphi^{-n}$ is not single-valued for every $n\in\N$ by Lemma~\ref{lem:iterate_map}. We conclude that $a+b\geq 0$. 
		
		If $a,b\geq 0$, then obviously $\varphi^{a+b}=\varphi^a\circ\varphi^b$, so there is nothing to prove. Assume that either $a>0>b$, or $b>0>a$.
		
		Assume first that $a>0>b$. Let $x\in X$, and let $y\in X$ be such that $\varphi^{-b}(y)=x$ (recall that $\varphi$ is onto). Then $\varphi^{a}(y)=\varphi^{a+b}(\varphi^{-b}(y))=\varphi^{a+b}(x)$, so $\varphi^{a+b}(x)\in\varphi^a(\varphi^b(x))$.
		
		Now assume that $a<0<b$. As before, we can conclude that $\varphi^{a+b}(x)\in\varphi^b(\varphi^a(x))$. Since $\varphi^b$ and $\varphi^{-a}$ commute, by Lemma~\ref{lem:comm}, we have $\varphi^b(\varphi^{a}(x))\subseteq\varphi^a(\varphi^b(x))$, which concludes the proof.
	\end{proof}
	
	\begin{lemma}\label{lem:composing}
		Assume that maps $f,g\colon X\to X$ commute, $g$ is onto, and $a_1, a_2, b_1, b_2\in\N_0$ are such that $f^{a_1}\circ g^{-a_2}\circ f^{b_1}\circ g^{-b_2}\colon X\to X$ is single-valued. Then $f^{a_1}\circ g^{-a_2}\circ f^{b_1}\circ g^{-b_2}=f^{a_1+b_1}\circ g^{-a_2-b_2}$.
	\end{lemma}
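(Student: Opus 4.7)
The plan is to prove the inclusion $f^{a_1+b_1}\circ g^{-a_2-b_2}(x)\subseteq f^{a_1}\circ g^{-a_2}\circ f^{b_1}\circ g^{-b_2}(x)$ for every $x\in X$, and then use the hypothesis that the right-hand side is single-valued, together with the onto-ness of $g$, to upgrade this inclusion to equality.

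The main tool will be Lemma~\ref{lem:comm}, which in its iterated form says that for all $j,k\in\N_0$ one has $f^j\circ g^{-k}\subseteq g^{-k}\circ f^j$ as set-valued functions on $X$. I would establish this in two small induction steps: first on $k$ (using $f\circ g^{-1}\subseteq g^{-1}\circ f$ and writing $f\circ g^{-k}=(f\circ g^{-1})\circ g^{-(k-1)}\subseteq g^{-1}\circ f\circ g^{-(k-1)}\subseteq\ldots\subseteq g^{-k}\circ f$), and then on $j$ by composing with $f$ on the left and using the case just proved. This commutation-up-to-containment is the core computational fact; everything else is manipulation.

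Applying this with $j=b_1$ and $k=a_2$, I obtain $f^{b_1}\circ g^{-a_2}\subseteq g^{-a_2}\circ f^{b_1}$. Composing with $f^{a_1}$ on the left and $g^{-b_2}$ on the right (and noting that composing set-valued functions with anything preserves inclusions), I get
\begin{equation*}
f^{a_1+b_1}\circ g^{-a_2-b_2}\;=\;f^{a_1}\circ(f^{b_1}\circ g^{-a_2})\circ g^{-b_2}\;\subseteq\;f^{a_1}\circ g^{-a_2}\circ f^{b_1}\circ g^{-b_2}.
\end{equation*}

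It now remains to upgrade this inclusion to equality. The right-hand side is assumed to be single-valued, so for each $x\in X$ the set $f^{a_1}\circ g^{-a_2}\circ f^{b_1}\circ g^{-b_2}(x)$ consists of exactly one point. The left-hand side $f^{a_1+b_1}\circ g^{-a_2-b_2}(x)$ is a subset of a one-point set, so it is either empty or equal to it; but since $g$ is onto, every preimage $g^{-1}(y)$ is non-empty, hence so is $g^{-a_2-b_2}(x)$, and applying $f^{a_1+b_1}$ preserves non-emptiness. Therefore the left-hand side is non-empty, forcing equality pointwise, which gives the desired identity of maps. The only mildly delicate step is the iterated commutation lemma; the rest is bookkeeping.
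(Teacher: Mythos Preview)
Your proof is correct and follows essentially the same route as the paper's: establish the inclusion $f^{a_1+b_1}\circ g^{-a_2-b_2}\subseteq f^{a_1}\circ g^{-a_2}\circ f^{b_1}\circ g^{-b_2}$ via the commutation inclusion $f^{b_1}\circ g^{-a_2}\subseteq g^{-a_2}\circ f^{b_1}$, then use non-emptiness (from $g$ onto) and single-valuedness to upgrade to equality. The only difference is that you derive $f^{b_1}\circ g^{-a_2}\subseteq g^{-a_2}\circ f^{b_1}$ by a double induction from the base case of Lemma~\ref{lem:comm}, whereas the paper simply observes that $f^{b_1}$ and $g^{a_2}$ themselves commute (being iterates of commuting maps) and applies Lemma~\ref{lem:comm} to that pair directly, avoiding the induction.
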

	\begin{proof}
		Since $f$ and $g$ commute, then so do $f^{b_1}$ and $g^{a_2}$. By Lemma~\ref{lem:comm}, $f^{b_1}\circ g^{-a_2}(x)\subseteq g^{-a_2}\circ f^{b_1}(x)$ for all $x\in X$. Thus, for $x\in X$,
		$$f^{a_1}\circ g^{-a_2}\circ f^{b_1}\circ g^{-b_2}(x)\supseteq f^{a_1+b_1}\circ g^{-a_2-b_2}(x).$$ 
		Since $g^{a_2+b_2}$ is onto, $f^{a_1+b_1}\circ g^{-a_2-b_2}(x)$ is non-empty for every $x\in X$. Moreover, since $f^{a_1}\circ g^{-a_2}\circ f^{b_1}\circ g^{-b_2}$ is single-valued, then for all $x\in X$,
		$$f^{a_1}\circ g^{-a_2}\circ f^{b_1}\circ g^{-b_2}(x)= f^{a_1+b_1}\circ g^{-a_2-b_2}(x).$$ 
	\end{proof}
	
	\begin{corollary}\label{cor:power}
		Assume that $f, g\colon X\to X$ are commuting maps, $g$ is onto, and $n,m\in\N$ are such that $f^n\circ g^{-m}$ is single-valued. Then, for every $N\in\N$ and every $x\in X$,
		$$(f^n\circ g^{-m})^N(x)=f^{nN}\circ g^{-mN}(x).$$ 
	\end{corollary}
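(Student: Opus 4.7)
The plan is a straightforward induction on $N$, with Lemma~\ref{lem:composing} supplying the inductive step.

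For the base case $N=1$, the identity $(f^n\circ g^{-m})^1 = f^n\circ g^{-m}$ is immediate. For the inductive step, I would assume that $(f^n\circ g^{-m})^N(x)=f^{nN}\circ g^{-mN}(x)$ for all $x\in X$, and in particular that $f^{nN}\circ g^{-mN}$ is single-valued (being the $N$-th iterate of the single-valued function $f^n\circ g^{-m}$). Writing
\[
(f^n\circ g^{-m})^{N+1}(x) \;=\; f^n\circ g^{-m}\circ (f^n\circ g^{-m})^N(x) \;=\; f^n\circ g^{-m}\circ f^{nN}\circ g^{-mN}(x),
\]
the right-hand side is single-valued since it is the composition of two single-valued functions. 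I can therefore invoke Lemma~\ref{lem:composing} with $a_1=n$, $a_2=m$, $b_1=nN$, $b_2=mN$, all of which lie in $\N_0$, to obtain
\[
f^n\circ g^{-m}\circ f^{nN}\circ g^{-mN} \;=\; f^{n+nN}\circ g^{-m-mN} \;=\; f^{n(N+1)}\circ g^{-m(N+1)}.
\]
This closes the induction.

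There is no real obstacle here: the only thing worth checking carefully is that the hypothesis of Lemma~\ref{lem:composing} (single-valuedness of the four-fold composition) is met at each step, which follows for free from the inductive assumption together with the fact that a composition of single-valued functions is single-valued. The corollary is essentially a repackaging of Lemma~\ref{lem:composing} applied iteratively.
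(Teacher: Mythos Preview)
The proposal is correct and matches the paper's intended argument: the paper states the result as an immediate corollary of Lemma~\ref{lem:composing} without writing out a proof, and your induction on $N$ with Lemma~\ref{lem:composing} supplying the inductive step is exactly how one fills in the details. Your care in checking that the single-valuedness hypothesis of Lemma~\ref{lem:composing} is satisfied at each step is the only point worth noting, and you handle it correctly.
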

	
	\begin{lemma}\label{lem:commfginv}
		Assume that $f,g,h\colon X\to X$ are maps such that $h$ commutes with both $f$ and $g$, and $f\circ g^{-1}$ is single-valued. Then $h$ also commutes with $f\circ g^{-1}$.
	\end{lemma}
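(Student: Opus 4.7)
The plan is to reduce the statement to the inclusion version of commutativity from Lemma~\ref{lem:comm}, and then use the single-valuedness hypothesis to upgrade the resulting inclusion to an equality.

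First I would apply Lemma~\ref{lem:comm} to the commuting pair $(h,g)$ to obtain $h(g^{-1}(x))\subseteq g^{-1}(h(x))$ for every $x\in X$. Applying $f$ to both sides, and using that $f\circ h = h\circ f$ as (single-valued) maps, I would compute
\[
 h\bigl((f\circ g^{-1})(x)\bigr)=h(f(g^{-1}(x)))=f(h(g^{-1}(x)))\subseteq f(g^{-1}(h(x)))=(f\circ g^{-1})(h(x)).
\]
Thus $h\circ(f\circ g^{-1})(x)\subseteq (f\circ g^{-1})\circ h(x)$ for every $x\in X$.

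Now I would invoke the single-valuedness hypothesis: since $f\circ g^{-1}$ is single-valued, so are both $h\circ(f\circ g^{-1})$ and $(f\circ g^{-1})\circ h$. An inclusion between two singleton-valued functions forces equality pointwise, so $h\circ(f\circ g^{-1})=(f\circ g^{-1})\circ h$, which is precisely the desired commutativity.

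There is essentially no serious obstacle here; the only subtlety worth flagging is that without the single-valuedness assumption we would only get a one-sided inclusion of set-valued functions, so the hypothesis that $f\circ g^{-1}$ is single-valued is doing real work at the last step. Everything else is a direct application of Lemma~\ref{lem:comm} together with the fact that honest maps $f$ and $h$ commute in the ordinary sense.
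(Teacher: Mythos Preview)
Your proof is correct and essentially identical to the paper's: both establish the chain $h\circ f\circ g^{-1}(x)=f\circ h\circ g^{-1}(x)\subseteq f\circ g^{-1}\circ h(x)$ using commutativity of $h$ with $f$ and Lemma~\ref{lem:comm} applied to $(h,g)$, then upgrade the inclusion to equality via single-valuedness.
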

	\begin{proof}
		Let $x\in X$. We have
		$$h\circ f\circ g^{-1}(x)=f\circ h\circ g^{-1}(x)\subseteq f\circ g^{-1}\circ h(x),$$
		since $h$ and $f$ commute, $h$ and $g$ commute, and by Lemma~\ref{lem:comm}. Since $f\circ g^{-1}$ is single-valued, then so are $f\circ g^{-1}\circ h$ and $h\circ f\circ g^{-1}$, and thus $f\circ h\circ g^{-1}(x)= f\circ g^{-1}\circ h(x)$.
	\end{proof}
	
	\begin{lemma}\label{lem:commH}
		Assume that $f,g\colon X\to X$ commute, and there is a map $h\colon X\to X$ such that $f=h\circ g$. Then $f\circ h=h\circ f$, and $g\circ h=h\circ g$.
	\end{lemma}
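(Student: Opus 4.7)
The plan is to first derive $g\circ h = h\circ g$ from the commutativity of $f$ and $g$ combined with the factorization $f = h\circ g$, and then deduce $f\circ h = h\circ f$ as an easy consequence.

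First, I would substitute $f = h\circ g$ into both sides of the equality $f\circ g = g\circ f$ to obtain
\begin{equation*}
h\circ g\circ g \;=\; g\circ h\circ g.
\end{equation*}
Since in this paper every map is assumed to be onto, $g$ is surjective, so every $y\in X$ can be written as $y = g(x)$ for some $x\in X$. Evaluating both sides of the displayed equation at such an $x$ yields $h\circ g(y) = g\circ h(y)$ for every $y\in X$, which is precisely $h\circ g = g\circ h$.

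With this in hand, the second identity follows by a one-line computation: using $f = h\circ g$ together with the relation $g\circ h = h\circ g$ just established,
\begin{equation*}
f\circ h \;=\; (h\circ g)\circ h \;=\; h\circ (g\circ h) \;=\; h\circ (h\circ g) \;=\; h\circ f.
\end{equation*}

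There is no real obstacle here; the only subtlety is that the argument relies crucially on $g$ being onto in order to cancel the rightmost $g$ in $h\circ g\circ g = g\circ h\circ g$. This is already built into the paper's standing convention that the word ``map'' means a continuous onto function, so no extra hypothesis is needed.
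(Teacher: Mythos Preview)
Your proof is correct and follows essentially the same route as the paper: derive $h\circ g\circ g = g\circ h\circ g$ from $f\circ g = g\circ f$ and $f = h\circ g$, cancel the rightmost $g$ using surjectivity, and then deduce $f\circ h = h\circ f$ by the same one-line computation. The paper additionally remarks that the result can alternatively be obtained from Lemma~\ref{lem:commfginv}, since $f\circ g^{-1}$ is single-valued (equal to $h$) and both $f$ and $g$ commute with it.
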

	\begin{proof}
		We have $g\circ h\circ g=g\circ f=f\circ g=h\circ g\circ g$, which implies $g\circ h=h\circ g$. Therefore, $f\circ h=h\circ g\circ h=h\circ h\circ g=h\circ f$.
		
		Alternatively, we can use Lemma~\ref{lem:commfginv} after noting that $f\circ g^{-1}(x)=\{h(x)\}$ for every $x\in X$, so it is single-valued, and using that $f$ and $g$ commute.
	\end{proof}
	
	\subsection{Preliminaries on entropy}\label{sec:prel_ent}
	Assume that $X$ is a {\em continuum}, \ie a compact, connected, metric space. We denote the metric on $X$ by $d_X$. Recall that a continuous function $f\colon X\to X$ is called a {\em map}. 
	
	Given an upper semi-continuous set-valued function $F\colon X\to 2^X$, and $n\in\N$, an {\em $n$-orbit} is every $n$-tuple $(x_1,\ldots,x_n)\in X^n$ such that $x_{i+1}\in F(x_{i})$ for all $1\leq i<n$. The set of all $n$-orbits of $F$ is denoted by $Orb_n(F)$.
	Given $\eps>0$ and $n\in\N$, a set $S\subset Orb_n(F)$ is called $(n,\eps)$-separated if for every $(x_1,\ldots,x_n), (y_1,\ldots,y_n)\in S$ there is $i\in\{1,\ldots,n\}$ such that $d_X(x_i,y_i)\geq\eps$. By $s_{n,\eps}(F)$ we denote the largest cardinality of an $(n,\eps)$-separated set. The {\em entropy} of $F$ is defined as
	$$\Ent(F)=\lim_{\eps\to 0}\limsup_{n\to\infty}\frac 1n\log(s_{n,\eps}(F)).$$
	
	Positive topological entropy represents exponential growth rate of the distinguishable orbits of $F$, and $\Ent(F)\in[0,+\infty]$. An accessible reference for learning about entropy can be found in Walters \cite{Walters}.  Below we list basic properties of entropy that will be needed later.
	
	\begin{lemma}\cite{Adler}\label{lem:conjugate}
		Given a continuum $X$, and maps $f,g\colon X\to X$, we say that $f$ and $g$ are {\em conjugate} if there is a homeomorphism $h\colon X\to X$ such that $f\circ h=h\circ g$. If $f$ and $g$ are conjugate, then $\Ent(f)=\Ent(g)$.
	\end{lemma}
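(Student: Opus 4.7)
The plan is to exploit the conjugacy at the level of orbits, then convert between separated sets for $f$ and for $g$ using the uniform continuity of $h$ and $h^{-1}$, and finally take the limits defining entropy.

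First, I would show by a quick induction that $f^n \circ h = h \circ g^n$ for every $n \in \N$. This implies that the bijection $(x_1,\ldots,x_n) \mapsto (h(x_1),\ldots,h(x_n))$ from $X^n$ to itself restricts to a bijection $\Phi_n \colon Orb_n(g) \to Orb_n(f)$: indeed, if $x_{i+1} = g(x_i)$, then $h(x_{i+1}) = h(g(x_i)) = f(h(x_i))$. So the combinatorics of orbits match exactly under $h$.

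The next step is the metric comparison. Because $X$ is a compact metric space and $h \colon X \to X$ is a homeomorphism, both $h$ and $h^{-1}$ are uniformly continuous. Thus for every $\eps > 0$ there is $\delta = \delta(\eps) > 0$, with $\delta(\eps) \to 0$ as $\eps \to 0$, such that $d_X(h(x),h(y)) < \delta$ implies $d_X(x,y) < \eps$. Taking contrapositives, if $S \subseteq Orb_n(g)$ is $(n,\eps)$-separated, then $\Phi_n(S) \subseteq Orb_n(f)$ is $(n,\delta)$-separated, because whenever $d_X(x_i,y_i) \ge \eps$ we have $d_X(h(x_i),h(y_i)) \ge \delta$. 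Consequently $s_{n,\delta(\eps)}(f) \ge s_{n,\eps}(g)$ for every $n$.

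Passing to limits, for each fixed $\eps > 0$,
\[
\Ent(f) \ge \limsup_{n\to\infty} \tfrac{1}{n}\log s_{n,\delta(\eps)}(f) \ge \limsup_{n\to\infty} \tfrac{1}{n}\log s_{n,\eps}(g),
\]
and letting $\eps \to 0$ (so that $\delta(\eps)\to 0$ as well, which is exactly what is needed because the entropy is defined as a limit as the separation scale shrinks to zero), I obtain $\Ent(f) \ge \Ent(g)$. Running the identical argument with the roles of $f$ and $g$ swapped, via the conjugacy $g \circ h^{-1} = h^{-1} \circ f$, gives the reverse inequality, and hence equality.

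The only step with any subtlety is the bookkeeping in the limit: one must ensure that $\delta(\eps)\to 0$ as $\eps\to 0$, which is immediate from uniform continuity of $h^{-1}$ on the compact space $X$. Everything else is a direct transport of orbits and separated sets across the conjugating homeomorphism.
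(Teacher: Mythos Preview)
Your proof is correct. The paper does not supply its own proof of this lemma---it is stated as a standard result with a citation to \cite{Adler}---so there is no approach to compare against.

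One minor remark: you do not actually need $\delta(\eps)\to 0$. Since $s_{n,\eps}(f)$ is non-increasing in $\eps$, the limit in the definition of $\Ent(f)$ is a supremum over $\eps>0$, and hence $\limsup_{n\to\infty}\tfrac{1}{n}\log s_{n,\delta}(f)\le \Ent(f)$ for \emph{every} $\delta>0$; the first inequality in your display therefore holds regardless of how $\delta(\eps)$ behaves. (In any case, one can always replace $\delta(\eps)$ by $\min\{\delta(\eps),\eps\}$, so your stated claim is true as well.)
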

	\begin{lemma}\cite{Adler}\label{lem:power}
		Given a continuum $X$, a map $f\colon X\to X$, and $n\in\N$,
		$$\Ent(f^n)=n\cdot\Ent(f).$$
	\end{lemma}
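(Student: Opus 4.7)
The plan is to work directly from the definition of entropy via $(N,\eps)$-separated sets of orbits, comparing orbits of $f^n$ to orbits of $f$ in both directions. Throughout I will use $N$ for the orbit length and $n$ for the iteration exponent as in the statement, and verify the two inequalities separately.

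For the easier direction $\Ent(f^n)\leq n\cdot\Ent(f)$, I would embed every $N$-orbit of $f^n$, say $(x_1,\ldots,x_N)$, into the $f$-orbit of length $(N-1)n+1$ obtained by filling in intermediate iterates, namely $(x_1,f(x_1),\ldots,f^{n-1}(x_1),x_2,f(x_2),\ldots,x_N)$. If two $f^n$-orbits are $(N,\eps)$-separated, differing at position $i$ by at least $\eps$, then the corresponding $f$-orbits differ by at least $\eps$ at position $(i-1)n+1$, so they are $((N-1)n+1,\eps)$-separated. This gives $s_{N,\eps}(f^n)\leq s_{Nn,\eps}(f)$; dividing by $N$, taking $\limsup$ as $N\to\infty$, and then letting $\eps\to 0$ yields $\Ent(f^n)\leq n\cdot\Ent(f)$.

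For the reverse inequality $n\cdot\Ent(f)\leq\Ent(f^n)$, I would invoke uniform continuity: since $X$ is compact, for each $\eps>0$ there exists $\delta>0$ such that $d_X(a,b)<\delta$ implies $d_X(f^i(a),f^i(b))<\eps$ for every $0\leq i<n$. Given an $(Nn,\eps)$-separated set $S$ of $f$-orbits, I would project each orbit $(y_1,y_2,\ldots,y_{Nn})$ to the $f^n$-orbit $(y_1,y_{n+1},y_{2n+1},\ldots,y_{(N-1)n+1})$ of length $N$. If two orbits in $S$ produced projections that agreed to within $\delta$ at every coordinate, then since $y_{(q-1)n+1+r}=f^r(y_{(q-1)n+1})$ for $0\leq r<n$, the choice of $\delta$ would force all intermediate positions of the $f$-orbits to agree within $\eps$, contradicting $(Nn,\eps)$-separation. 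Hence the projections form an $(N,\delta)$-separated set for $f^n$, so $s_{Nn,\eps}(f)\leq s_{N,\delta}(f^n)$. Dividing by $Nn$, taking $\limsup$ as $N\to\infty$, and then letting $\eps\to 0$ (which forces $\delta\to 0$) yields $\Ent(f)\leq\frac{1}{n}\Ent(f^n)$.

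The main obstacle is the careful bookkeeping in the lower-bound direction: one must verify that the projection from $f$-orbits of length $Nn$ to $f^n$-orbits of length $N$ carries $(Nn,\eps)$-separation to $(N,\delta)$-separation for an appropriate $\delta=\delta(\eps,n)$ depending only on the uniform continuity moduli of $f,f^2,\ldots,f^{n-1}$, which is precisely where compactness of $X$ enters essentially. All remaining steps are routine limit manipulations, and combining the two inequalities yields $\Ent(f^n)=n\cdot\Ent(f)$.
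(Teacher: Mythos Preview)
The paper does not supply its own proof of this lemma; it is stated with a citation to \cite{Adler} and used as a black box. Your argument is the standard textbook proof (essentially the one in Walters \cite{Walters}) and is correct. One small bookkeeping point worth making explicit in the second direction: after obtaining $s_{Nn,\eps}(f)\leq s_{N,\delta}(f^n)$ you need the full $\limsup$ over all orbit lengths $M$, not just multiples of $n$; this follows at once from monotonicity of $s_{M,\eps}(f)$ in $M$, since any $M$ with $Nn\leq M<(N+1)n$ satisfies $s_{M,\eps}(f)\leq s_{(N+1)n,\eps}(f)\leq s_{N+1,\delta}(f^n)$ and $(N+1)/M\to 1/n$. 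With that remark your sketch is complete.
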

	\begin{lemma}\cite[Corollary~3.6]{KelTen}\label{lem:inverse}
		Given a continuum $X$ and an upper semi-continuous $F\colon X\to 2^X$, $\Ent(F^{-1})=\Ent(F)$.
	\end{lemma}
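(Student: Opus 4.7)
The plan is to exhibit a natural bijection between $n$-orbits of $F$ and $n$-orbits of $F^{-1}$ that preserves the quantity used to define entropy. Unpacking the definitions, an $n$-orbit of $F$ is a tuple $(x_1,\ldots,x_n)$ with $x_{i+1}\in F(x_i)$ for $1\le i<n$, which is equivalent to $x_i\in F^{-1}(x_{i+1})$. Reading this from right to left, the reversed tuple $(x_n,x_{n-1},\ldots,x_1)$ is exactly an $n$-orbit of $F^{-1}$. Consequently the map $R\colon Orb_n(F)\to Orb_n(F^{-1})$ defined by $R(x_1,\ldots,x_n)=(x_n,x_{n-1},\ldots,x_1)$ is a bijection, with inverse also given by reversal.

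Next I would check that $R$ carries $(n,\eps)$-separated sets to $(n,\eps)$-separated sets. Two $n$-orbits $(x_1,\ldots,x_n)$ and $(y_1,\ldots,y_n)$ are $(n,\eps)$-separated iff $d_X(x_i,y_i)\ge\eps$ for some index $i\in\{1,\ldots,n\}$; after applying $R$ this index merely shifts to $n-i+1\in\{1,\ldots,n\}$, so the separation property is preserved both ways. Therefore $s_{n,\eps}(F)=s_{n,\eps}(F^{-1})$ for every $n$ and every $\eps>0$, and plugging this equality into the definition $\Ent(F)=\lim_{\eps\to 0}\limsup_{n\to\infty}\tfrac1n\log s_{n,\eps}(F)$ immediately yields $\Ent(F)=\Ent(F^{-1})$.

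The only background fact needed is that $F^{-1}$ is itself upper semi-continuous, so that $\Ent(F^{-1})$ is actually defined by the same formula; this follows because the graph of $F^{-1}$ is the image of $\Gamma(F)\subset X\times X$ under the coordinate swap $(x,y)\mapsto(y,x)$, which is a self-homeomorphism of $X\times X$ and hence preserves closedness. I do not anticipate any genuine obstacle: once the bijection $R$ is identified, the argument is purely combinatorial, amounting to a relabeling of indices, and requires no analytic estimate beyond the definition of $(n,\eps)$-separation.
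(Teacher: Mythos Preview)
Your argument is correct and is exactly the standard proof: the reversal map is a bijection between $Orb_n(F)$ and $Orb_n(F^{-1})$ that preserves $(n,\eps)$-separation, so the separated-set counts agree and the entropies coincide. The paper does not give its own proof of this lemma; it simply quotes the result from \cite[Corollary~3.6]{KelTen}, and the argument there is essentially the one you have written.

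One small technical remark: in the paper's conventions a set-valued function takes values in $2^X$, the \emph{non-empty} closed subsets of $X$, so strictly speaking $F^{-1}$ is only a set-valued function in this sense when $F$ is onto. Your orbit-reversal argument is unaffected by this (the bijection on orbits and the separated-set count make sense regardless), but if you want the statement to be literally well-posed in the paper's framework you should either note that $F$ is assumed onto or observe that the entropy formula via $(n,\eps)$-separated sets extends harmlessly to set-valued functions with possibly empty values.
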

	
	Given a continuum $X$ and a map $f\colon X\to X$, we define the {\em inverse limit} $\varprojlim(X,f)$ as
	$$\varprojlim(X,f):=\{(x_0,x_1,x_2,\ldots): x_i\in f^{-1}(x_{i-1}), i\in\N\}\subset \prod_{i\geq 0}X.$$
	The space $\varprojlim(X,f)$, equipped with the product topology, is also a continuum.
	
	\begin{theorem}\cite[Theorem~3.1]{Ye}\label{thm:Ye}
		Let $X$ be a continuum, and let $f,g\colon X\to X$ be commuting maps on $X$. Let $\Psi\colon\varprojlim(X,f)\to\varprojlim(X,f)$ be defined as
		$$\Psi((x_0,x_1,x_2,\ldots)=(g(x_0),g(x_1),g(x_2),\ldots).$$
		Then $\Ent(\Psi)=\Ent(g)$.
	\end{theorem}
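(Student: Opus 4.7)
The plan is to establish the two inequalities $\Ent(\Psi)\ge\Ent(g)$ and $\Ent(\Psi)\le\Ent(g)$ separately. For the lower bound I would use the projection $\pi_0\colon \varprojlim(X,f)\to X$, $(x_0,x_1,\ldots)\mapsto x_0$. Since $f$ is onto, $\pi_0$ is a continuous surjection, and by the definition of $\Psi$ we have $\pi_0\circ\Psi=g\circ\pi_0$. Thus $g$ is a topological factor of $\Psi$, and the standard fact that factor maps do not increase entropy gives $\Ent(\Psi)\ge\Ent(g)$.

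For the reverse inequality, the key observation is that the backward coordinates of $\varprojlim(X,f)$ are rigidly tied to the last one: since $f$ is onto, any $\bar x=(x_0,x_1,\ldots)\in\varprojlim(X,f)$ satisfies $x_i=f^{N-i}(x_N)$ for $i\le N$, so the projection $\pi_N$ onto the $N$-th coordinate is a bijection onto $X$. I would fix $\eps>0$ and pick $N$ large enough that $\sum_{i>N}\diam(X)/2^i<\eps/2$, so that two inverse-limit sequences agreeing on coordinate $N$ (and hence on all coordinates $\le N$) lie within $\eps/2$ of each other. Since $\Psi$ acts coordinate-wise, this property persists under every iterate of $\Psi$; in particular the restriction of $\pi_N$ to any $(n,\eps)$-separated set $S$ for $\Psi$ is injective.

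To upgrade injectivity into a separation bound, I would take distinct $\bar x,\bar y\in S$ and $k\in\{0,\ldots,n-1\}$ with $d(\Psi^k\bar x,\Psi^k\bar y)\ge\eps$; the tail estimate forces $d_X(g^k(x_i),g^k(y_i))\ge\eps/(2(N+1))$ for some $i\le N$. Rewriting $g^k(x_i)=f^{N-i}(g^k(x_N))$ by commutativity and applying the uniform continuity of $f,f^2,\ldots,f^N$ on the compact space $X$ produces a single $\delta=\delta(\eps,N)>0$ with $d_X(g^k(x_N),g^k(y_N))\ge\delta$. Hence $\pi_N(S)$ is $(n,\delta)$-separated for $g$, so $s_{n,\eps}(\Psi)\le s_{n,\delta}(g)$; taking $\limsup_n\tfrac{1}{n}\log$ and then $\eps\to 0$ yields $\Ent(\Psi)\le\Ent(g)$. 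The hard step is this upper bound: the ``no new entropy'' intuition must be converted into a quantitative transfer of $(n,\eps)$-separation from $\varprojlim(X,f)$ to $X$, and it is precisely the interplay of surjectivity of $f$, commutativity $f\circ g=g\circ f$, and uniform continuity of the forward iterates of $f$ that makes this possible.
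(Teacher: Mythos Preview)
The paper does not give its own proof of this statement; it is quoted from \cite{Ye} and used as a black box.  So there is nothing in the paper to compare your argument against.

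Your argument itself is essentially correct and is the standard one.  There is one misstatement that you should fix: the projection $\pi_N\colon\varprojlim(X,f)\to X$ is \emph{not} a bijection in general --- when $f$ is not injective there are many inverse-limit points sharing the same $N$-th coordinate, since the tail $(x_{N+1},x_{N+2},\ldots)$ is not determined by $x_N$.  Fortunately you never actually use bijectivity of $\pi_N$ on the whole space; what you use (and correctly justify) is that $\pi_N$ restricted to any $(n,\eps)$-separated set $S$ is injective, because two points with the same $N$-th coordinate agree on all coordinates $\le N$ and hence stay within $\eps/2$ under every iterate of $\Psi$.  With that sentence deleted or corrected, the remainder of your upper-bound argument (passing from separation in the product metric to separation of the $N$-th coordinates via commutativity and the uniform continuity of $f,f^2,\ldots,f^N$) is fine, and the lower bound via the factor map $\pi_0$ is immediate.
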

	
	In \cite{AM-entropy} we started the study of so-called {\em diagonal} maps on $\varprojlim(X,f)$. Given a continuum $X$, and $f,g\colon X\to X$ which commute, the $g$-diagonal map $\psi\colon\varprojlim(X,f)\to\varprojlim(X,f)$ is defined as
	$$\Psi((x_0,x_1,x_2,\ldots)=(g(x_1),g(x_2),\ldots).$$
	We then defined $f,g\colon X\to X$ to {\em strongly commute} if $g\circ f^{-1}(x)=f^{-1}\circ g(x)$ for all $x\in X$. With this assumption, we proved the following theorem.
	
	\begin{theorem}\cite[Proposition~3.5 and Corollary~5.5]{AM-entropy}\label{thm:str_comm}
		Let $X$ be a continuum, and assume that maps $f,g\colon X\to X$ strongly commute. Then the entropy of $g$-diagonal map $\Psi\colon\varprojlim(X,f)\to\varprojlim(X,f)$ is
		$$\Ent(\Psi)=\Ent(g\circ f^{-1}).$$
		Moreover, if $X=I=[0,1]$ is the unit interval, and $f,g\colon I\to I$ are piecewise monotone (see below) and strongly commuting maps, then 
		$$\Ent(\Psi)=\Ent(g\circ f^{-1})=\max\{\Ent(f), \Ent(g)\}.$$
	\end{theorem}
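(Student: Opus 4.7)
The plan is to realize $\Psi$ as the shift map on a space of orbits of the set-valued function $F := g\circ f^{-1}$, and then compute entropy via this conjugacy. Strong commutativity is the crucial hypothesis that allows the set-valued machinery to behave as if it were single-valued at the level of iterates.

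First, by applying Lemma~\ref{lem:composing} and Corollary~\ref{cor:power} inductively, strong commutativity lifts to all powers: $F^n = g^n\circ f^{-n}$ for every $n\in\N$. A direct computation shows $\Psi^n(x_0,x_1,x_2,\ldots) = (g^n(x_n),g^n(x_{n+1}),\ldots)$. Define $\pi\colon\varprojlim(X,f)\to X^{\N_0}$ by $\pi(x_0,x_1,\ldots) := (x_0,g(x_1),g^2(x_2),\ldots)$. Each coordinate $g^n(x_n)$ lies in $F^n(x_0)$, so $\pi$ maps into the orbit space $\varprojlim(X,F) := \{(y_n) : y_{n+1}\in F(y_n)\}$, and a direct check gives $\pi\circ\Psi=\sigma\circ\pi$, where $\sigma$ denotes the coordinate shift. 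I would then argue $\pi$ is a homeomorphism: continuity is immediate, and the inverse is built by lifting each $y_{n+1}\in g(f^{-1}(y_n))$ to the unique $x_{n+1}\in f^{-1}(x_n)$ with $g(x_{n+1})=y_{n+1}$, where uniqueness and coherence are forced by strong commutativity together with the constraint $f(x_{n+1})=x_n$. From the conjugacy, $\Ent(\Psi)=\Ent(\sigma)$, and a general theorem about shifts on orbit inverse limits of upper semi-continuous set-valued functions (a set-valued analogue in the spirit of Theorem~\ref{thm:Ye} combined with Lemma~\ref{lem:inverse}) gives $\Ent(\sigma)=\Ent(F)=\Ent(g\circ f^{-1})$, handling the general-continuum part.

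For the interval case, I would invoke the Misiurewicz--Szlenk formula $\Ent(\varphi)=\limsup_{n\to\infty}\frac{1}{n}\log c(\varphi^n)$, where $c$ counts laps, and reduce the problem to comparing the growth of $c(F^n)=c(g^n\circ f^{-n})$ with those of $c(f^n)$ and $c(g^n)$. Strong commutativity forces a tight alignment between the turning points of $f^n$ and $g^n$: when laps of $g^n$ are pulled back along $f^{-n}$, the resulting piecewise-monotone pieces correspond either to laps of $f^n$ or to laps of $g^n$, never to their cross-product, preventing the worst-case multiplicative blowup $c(f^n)\cdot c(g^n)$. Instead the lap count of the composition grows at the $\max$ rate, and the Misiurewicz--Szlenk formula then yields $\Ent(F)=\max\{\Ent(f),\Ent(g)\}$.

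The main obstacles are twofold: (a) rigorously verifying that $\pi$ is a genuine homeomorphism rather than a mere semi-conjugacy — this is exactly where strong commutativity is indispensable, since without it the lift $x_{n+1}$ need not be unique and one only obtains an inequality $\Ent(F)\le\Ent(\Psi)$; and (b) the lap-number comparison for piecewise monotone maps, which is a delicate combinatorial analysis of how preimages of $f^n$-turning points distribute along $g^n$-laps. Both steps use strong commutativity in an essential way, which is consistent with the fact that the cleaner formula $\max\{\Ent(f),\Ent(g)\}$ is not expected for arbitrary commuting pairs.
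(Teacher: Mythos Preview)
This theorem is not proved in the present paper: it is quoted verbatim from \cite{AM-entropy} (Proposition~3.5 and Corollary~5.5 there) and used as a black box. There is therefore no ``paper's own proof'' to compare your proposal against.

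That said, a brief comment on your sketch. The conjugacy idea is natural, and the verification that $\pi\circ\Psi=\sigma\circ\pi$ is correct. The serious gap is in the claim that $\pi$ is a homeomorphism: you assert that the lift $x_{n+1}\in f^{-1}(x_n)$ with $g^{n+1}(x_{n+1})=y_{n+1}$ is \emph{unique}, but strong commutativity $g\circ f^{-1}=f^{-1}\circ g$ tells you only that the image sets coincide, not that $g$ (let alone $g^{n+1}$) is injective on each $f$-fibre. Without that injectivity you get at best a semi-conjugacy and an inequality, not the equality $\Ent(\Psi)=\Ent(g\circ f^{-1})$. You would need an additional argument, for instance showing that any failure of injectivity is absorbed when passing to entropy, or establishing a cardinality-matching statement between fibres that strong commutativity does force. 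For the interval part, the phrase ``laps correspond either to laps of $f^n$ or to laps of $g^n$, never to their cross-product'' is the entire content of the result and is asserted rather than argued; this is exactly the combinatorial core of Corollary~5.5 in \cite{AM-entropy}, and it does not follow from a one-line observation.
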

	
	In Section~\ref{sec:entropy} we extend this result by computing the entropy of $g$-diagonal maps on $\varprojlim(X,f)$, given certain commuting maps $f,g\colon X\to X$ which commute, but do not strongly commute. 
	
	\subsection{Preliminaries on interval maps}
	We denote the unit interval by $I=[0,1]$. A map $f\colon I\to I$ is called {\em piecewise monotone} if there exist points $0=c_0<c_1<\ldots<c_{n-1}<c_{n}=1$ such that $f|_{[c_i,c_{i+1}]}$ is one-to-one for every $i\in\{0,\ldots,n-1\}$. We assume that $\{c_0,\ldots,c_{n}\}$ is chosen such that $f|_{[c_{i-1},c_{i}]}$ and $f|_{[c_{i},c_{i+1}]}$ are alternating being strictly increasing and strictly decreasing for all $i\in\{1,\ldots,n-1\}$. In that case, points $c_0,\ldots,c_n$ are called {\em critical points}, and we let $C(f)=n$.
	
	\begin{theorem}\cite{MisSl}\label{thm:Mis}
		Let $f\colon I\to I$ be piecewise monotone map. Then
		$$\Ent(f)=\lim_{n\to\infty}\frac 1n\log(C(f^n)).$$
	\end{theorem}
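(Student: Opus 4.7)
The plan is to establish $\Ent(f) = L$ where $L := \lim_{n\to\infty} \frac{1}{n}\log C(f^n)$, which requires first verifying that the limit exists. Existence follows from the submultiplicativity $C(f \circ g) \le C(f) \cdot C(g)$: on each of the $C(g)$ monotonicity intervals of $g$, the image is a single interval on which $f$ has at most $C(f)$ monotonicity pieces, giving at most $C(f) \cdot C(g)$ such pieces for $f \circ g$ in total. Applied to $f^{n+m} = f^n \circ f^m$, this yields subadditivity of $n \mapsto \log C(f^n)$, so $L = \inf_n \frac{1}{n}\log C(f^n)$ exists in $[0,\infty)$.

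For the upper bound $\Ent(f) \le L$, I would use the two-step reduction: first prove the simpler bound $\Ent(f) \le \log C(f)$, then apply it to powers using Lemma~\ref{lem:power}, obtaining $\Ent(f) = \frac{1}{n}\Ent(f^n) \le \frac{1}{n}\log C(f^n)$, and take the infimum over $n$. The bound $\Ent(f) \le \log C(f)$ is itself a consequence of the semi-conjugacy to a subshift on $C(f)$ symbols: the itinerary map $x \mapsto (i_0, i_1, i_2, \ldots)$, where $f^k(x) \in I_{i_k}$ and $\{I_1, \ldots, I_{C(f)}\}$ is the monotonicity partition of $f$, factors $f$ through a closed subshift of the full shift on $C(f)$ symbols, whose entropy is at most $\log C(f)$.

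For the lower bound $\Ent(f) \ge L$, I would build a large separated set directly. From each $J \in \mathcal{J}_n$ (the partition of $I$ into the $C(f^n)$ monotonicity intervals of $f^n$) that is large enough to admit one, I would pick a representative $x_J$ such that each iterate $f^k(x_J)$, $0 \le k < n$, lies at distance at least some fixed $\epsilon_0 > 0$ from the critical set of $f$. If two representatives $x_J$ and $x_{J'}$ from distinct $J, J'$ remained $(n, \epsilon_0)$-close at every step, an inductive argument using monotonicity of $f$ on each element of $\mathcal{J}_1$ would place them in the same element of $\mathcal{J}_n$, a contradiction. Hence the representatives form an $(n,\epsilon_0)$-separated set of size $C(f^n) - O(1)$, giving $\Ent(f) \ge L$.

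The main obstacle is the upper bound $\Ent(f) \le \log C(f)$. Making the semi-conjugacy argument rigorous requires careful treatment of the boundaries between monotonicity pieces, where the itinerary map may be multi-valued or discontinuous, and one must ensure that no entropy is lost either in choosing a consistent itinerary or in the fiber structure of the resulting factor map. The alternative of a direct separated-sets argument is also delicate, because monotonicity of $f^n$ on a piece $J \in \mathcal{J}_n$ gives no immediate bound on the diameters of intermediate images $f^k(J)$, so propagating closeness from time $n$ to intermediate times $k < n$ requires a non-trivial argument that does not absorb factors growing with $n$.
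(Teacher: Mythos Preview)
The paper does not prove this theorem at all; it is simply quoted with a citation to Misiurewicz--Szlenk \cite{MisSl}, so there is no argument in the paper to compare yours against. Judged on its own, your plan has the right architecture (submultiplicativity for existence, upper bound via symbolic coding, lower bound via separated sets built from $\mathcal J_n$), but there are genuine gaps in both inequalities.

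For the upper bound, the itinerary map $\pi\colon x\mapsto(i_0,i_1,\dots)$ satisfies $\sigma\circ\pi=\pi\circ f$, which exhibits the subshift as a \emph{factor} of $(I,f)$, and the resulting inequality $\Ent(\sigma|_{\pi(I)})\le\Ent(f)$ goes the wrong way. Your worry about ``no entropy lost in the fibre structure'' is exactly the missing ingredient: one must show the fibres of $\pi$ are intervals on which the induced dynamics is monotone, hence zero-entropy, and then invoke a Bowen-type inequality --- or bypass the coding entirely by observing that on each $J\in\mathcal J_n$ the orbit map $x\mapsto(x,f(x),\dots,f^{n-1}(x))$ is coordinatewise monotone, whence any $(n,\epsilon)$-separated subset of $J$ has at most $n/\epsilon+1$ points and so $s_{n,\epsilon}(f)\le(n/\epsilon+1)C(f^n)$. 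For the lower bound, the assertion that all but $O(1)$ of the intervals $J\in\mathcal J_n$ contain a point whose first $n$ iterates stay $\epsilon_0$-away from the critical set is not justified: every endpoint of every $J$ already lies in the bad set $\bigcup_{k<n}f^{-k}(N_{\epsilon_0}(\mathrm{crit}\,f))$, and in general the number of $J$ entirely contained in this set can grow with $n$. Obtaining a separated set of the required size out of $\mathcal J_n$ is the substantive part of the Misiurewicz--Szlenk argument and needs more than you have sketched.
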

	
	\begin{lemma}\label{lem:entropy} If $\phi,h:I\to I$ are commuting maps such that $\phi$ is piecewise monotone and $h$ is a homeomophism, then $\mbox{Ent}(\phi)=\mbox{Ent}(h\circ\phi)$.
	\end{lemma}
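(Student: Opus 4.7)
The plan is to reduce the statement to a comparison of critical-point counts and then invoke the Misiurewicz--Szlenk formula (Theorem~\ref{thm:Mis}). First I would check that $h\circ\phi$ is piecewise monotone: since $h\colon I\to I$ is a homeomorphism, it is strictly monotone on $I$, and consequently $h\circ\phi$ changes monotonicity at exactly the same points as $\phi$. In particular $h\circ\phi$ is piecewise monotone with $C(h\circ\phi)=C(\phi)$, so Theorem~\ref{thm:Mis} applies to both $\phi$ and $h\circ\phi$.

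Next I would exploit commutativity to rewrite iterates of $h\circ\phi$. Since $h$ and $\phi$ commute, a straightforward induction gives
\[
(h\circ\phi)^n \;=\; h^n\circ\phi^n
\]
for every $n\in\N$. Because $h^n$ is again a homeomorphism of $I$ (hence strictly monotone), left-composition by $h^n$ does not alter the monotonicity intervals of $\phi^n$; that is, $C(h^n\circ\phi^n)=C(\phi^n)$. Combining these two observations yields
\[
C\bigl((h\circ\phi)^n\bigr) \;=\; C(\phi^n) \qquad \text{for every } n\in\N.
\]

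Finally, applying Theorem~\ref{thm:Mis} to both sides gives
\[
\Ent(h\circ\phi) \;=\; \lim_{n\to\infty}\frac{1}{n}\log C\bigl((h\circ\phi)^n\bigr) \;=\; \lim_{n\to\infty}\frac{1}{n}\log C(\phi^n) \;=\; \Ent(\phi),
\]
which is the desired equality.

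The only substantive step is verifying $C(h^n\circ\phi^n)=C(\phi^n)$, and this is really just the observation that a strictly monotone bijection post-composed with any map preserves its piecewise-monotone structure, so I do not expect a genuine obstacle. The role of the commuting hypothesis is concentrated in the identity $(h\circ\phi)^n=h^n\circ\phi^n$, without which the iterates of $h\circ\phi$ could interleave $h$'s and $\phi$'s in a way that would force us to track critical points of $\phi\circ h\circ\phi\circ\cdots$, whose monotonicity behaviour depends on $h$ in a non-trivial way.
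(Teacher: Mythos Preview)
Your proof is correct and follows essentially the same approach as the paper: use commutativity to write $(h\circ\phi)^n=h^n\circ\phi^n$, observe that post-composition by the homeomorphism $h^n$ preserves the critical-point count so $C((h\circ\phi)^n)=C(\phi^n)$, and conclude via Theorem~\ref{thm:Mis}. Your version is simply more explicit about why Theorem~\ref{thm:Mis} applies to $h\circ\phi$ and why a monotone bijection preserves the piecewise-monotone structure.
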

	\begin{proof}
		Note that $(h\circ\varphi)^n=h^n\circ\varphi^n$. Since $h$ is a homeomorphism, $C(h^n\circ\varphi^n)=C(\varphi^n)$. Theorem~\ref{thm:Mis} implies that $\Ent(h\circ\varphi)=\Ent(\varphi)$.
	\end{proof}
	
	\section{Kin and cousins}\label{sec:kin}
	
	In this and the following section we let $X$ be a continuum, and functions on $X$ are assumed to be maps, \ie continuous. However, the results hold in much higher generality, \ie we can simply take $X$ to be a set, and functions on $X$ with no additional properties. Homeomorphisms can be replaced by bijections.  
	
	\begin{definition}
		Let $f,g\colon X\to X$ be maps. We say that $f,g\colon X\to X$ are {\em kin} if there exist a map $\varphi\colon X\to X$, a homeomorphism $h\colon X\to X$ which commutes with $\varphi$, and $a,b\geq 0, n,m\in\N$ such that
		$$f=h^a\circ\varphi^n, \textrm{ and} \quad g=h^b\circ \varphi^m.$$
	\end{definition}
	
	\begin{definition}
		Let $f,g\colon X\to X$ be maps. We will say that $f$ and $g$ are \emph{cousins} if there is a map $\varphi\colon X\to X$, and there are $n,m\in\N$ such that
		$$f=\varphi^n, \textrm{ and} \quad g=\varphi^m.$$
		The map $\varphi$ is called an \emph{ancestor} of $f$ and $g$. Note that if $f,g$ are cousins, then they are kin.
	\end{definition}
	
	\begin{remark}\label{rem:homeo}
		Every two commuting homeomorphisms $f,g\colon X\to X$ are kin. We can take $h=f\circ g^{-1}$ and $\phi=g$. Then $h$ and $\phi$ commute, $f=h\circ\phi$, and $g=h^0\circ\phi$. This gives a variety of examples of pairs of kin which are not necessarily cousins. For a less trivial example, see Example~\ref{ex:Anna}.
	\end{remark}
	
	Given cousins $f,g\colon X\to X$, we can define their {\em greatest ancestor} as a map $\hat\varphi\colon X\to X$ that is an ancestor of $f$ and $g$, and such that for every $\psi\colon X\to X$ that is an ancestor of $f$ and $g$ there exists $L\in\N$ such that $\psi^L=\hat\varphi$. It is not, a priori, clear whether two cousins have a greatest ancestor, since there can be many different maps $\varphi\colon X\to X$ for which there is $n\in\N$ such that $\varphi^n=f$. However, a greatest ancestor always exists as we show in the following proposition.
	
	\begin{proposition}
		Let $f,g\colon X\to X$ be cousins. Then there exists 
		a greatest ancestor of $f$ and $g$.
	\end{proposition}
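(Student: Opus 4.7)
The plan is to exhibit a greatest ancestor directly from any available one by passing to a suitable power, then verify universality through a Bezout calculation and right-cancellation. Since $f,g$ are cousins, choose an ancestor $\varphi$ with $f=\varphi^n$, $g=\varphi^m$, let $d=\gcd(n,m)$, and set $\hat\varphi:=\varphi^d$. Then $\hat\varphi$ is itself an ancestor, with $f=\hat\varphi^N$ and $g=\hat\varphi^M$, where $N:=n/d$ and $M:=m/d$ are coprime; this coprimality of $N,M$ will drive the remaining argument.

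Let $\psi$ be any ancestor of $f,g$, so $\psi^{n'}=f$ and $\psi^{m'}=g$ for some $n',m'\in\N$. Using $\gcd(N,M)=1$, one can choose (by adjusting Bezout solutions by multiples of $N,M$) positive integers $p,q\ge 1$ with $pN-qM=1$, and $p',q'\ge 1$ with $q'M-p'N=1$. Substituting $\hat\varphi^{aN}=\psi^{an'}$ and $\hat\varphi^{bM}=\psi^{bm'}$ into the identities $\hat\varphi^{pN}=\hat\varphi\circ\hat\varphi^{qM}$ and $\hat\varphi^{q'M}=\hat\varphi\circ\hat\varphi^{p'N}$ yields
\[
    \psi^{pn'} \;=\; \hat\varphi\circ\psi^{qm'} \qquad\text{and}\qquad \psi^{q'm'} \;=\; \hat\varphi\circ\psi^{p'n'}.
\]

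The key observation is that at least one of $pn'\ge qm'$ or $q'm'\ge p'n'$ must hold: the two Bezout equations force $q/p<N/M<q'/p'$, so the chain $q'/p'<n'/m'<q/p$ that would follow from simultaneous failure is impossible. In the first case, writing $\psi^{pn'}=\psi^{pn'-qm'}\circ\psi^{qm'}$ and right-cancelling the onto map $\psi^{qm'}$ against $\hat\varphi\circ\psi^{qm'}$ gives $\hat\varphi=\psi^{pn'-qm'}$, so we set $L:=pn'-qm'$; the second case is symmetric. The main obstacle is the borderline situation in which the extracted $L$ equals $0$: then $\hat\varphi=\mathrm{id}$, forcing $f=g=\mathrm{id}$, and the argument must be supplemented by the observation that in this case every ancestor $\psi$ satisfies $\psi^{n'}=\mathrm{id}$ and hence has finite order, so one may simply take $L$ to be that order. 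A small auxiliary check also handles the degenerate case $N=M=1$, where the Bezout step collapses but $f=g=\hat\varphi$ and $L=n'$ works directly.
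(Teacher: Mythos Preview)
Your argument is correct. Both your proof and the paper's set $\hat\varphi=\varphi^{\gcd(n,m)}$ and then use a Bezout identity to exhibit $\hat\varphi$ as a positive power of an arbitrary ancestor $\psi$, so the overall strategy is the same. The execution, however, differs.

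The paper works with a single Bezout relation $\alpha n'+\beta m'=1$ and exploits the free parameter $l$ in $(\alpha+lm')n'+(\beta-ln')m'=1$ to force $L=\alpha i+\beta j+l(m'i-n'j)>0$ in one stroke; it then invokes the set-valued decomposition lemma (Lemma~\ref{lem:decompose}) to write $\varphi^k(x)\in f^{\alpha+lm'}(g^{\beta-ln'}(x))$ and reads off $\varphi^k=\psi^L$. Your route instead fixes two positive Bezout pairs $pN-qM=1$ and $q'M-p'N=1$, compares the fractions $q/p<N/M<q'/p'$ to guarantee that at least one of $pn'\ge qm'$ or $q'm'\ge p'n'$ holds, and then right-cancels the onto map $\psi^{qm'}$ (or $\psi^{p'n'}$) from the single-valued identity $\psi^{pn'}=\hat\varphi\circ\psi^{qm'}$. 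This avoids set-valued functions entirely and is more elementary in that sense, at the cost of the extra casework and the separate handling of $L=0$. Two small remarks: the case $N=M=1$ does not actually make the Bezout step collapse (e.g.\ $p=2,q=1$ works), so that aside is unnecessary; and the $L=0$ case could alternatively be dispatched by observing that if one Bezout pair yields $L=0$ then the other necessarily yields a strictly positive exponent, since $q/p<q'/p'$.
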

	\begin{proof}
		Assume $\varphi\colon X\to X$ is a map, and $n,m\in\N$ are such that $f=\varphi^n, g=\varphi^m$. Let $k=gcd(n,m)$. Then there are relatively prime $n',m'\in\N$ such that $n=n'k, m=m'k$. We will show that $\varphi^k$ is a greatest ancestor of $f$ and $g$.
		
		Assume that $\psi\colon X\to X$ is a map, and $i,j\in\N$ are such that $f=\psi^i$, $g=\psi^j$. 
		
		Since $n',m'$ are relatively prime, there exist $\alpha,\beta\in\Z$ such that $\alpha n'+\beta m'=1$. Actually, for every $l\in\Z$, $(\alpha+lm')n'+(\beta-ln')m'=\alpha n'+\beta m'=1$. Choose $l\in\Z$ such that
		$$L:=\alpha i+\beta j + l(m'i-n'j)>0, \textrm{\ie } L\in\N.$$
		
		For all $x\in X$, since we do not know whether $\alpha+lm'>0$ or $\beta-ln'>0$, we use Lemma~\ref{lem:decompose} to conclude:
		\begin{equation*}
			\begin{split}
				\varphi^k(x) & = \varphi^{k((\alpha+lm')n'+(\beta-ln')m')}(x)\in \varphi^{kn'(\alpha + lm')}(\varphi^{km'(\beta-ln')}(x)) \\ & = f^{\alpha+lm'}(g^{\beta-ln'}(x))=\psi^{i(\alpha+lm')+j(\beta-ln')}(x)=\psi^{\alpha i + \beta j + l(m'i-n'j)}(x)=\psi^L(x).
			\end{split}
		\end{equation*}  
		Since $L\in\N$, $\psi^L(x)$ is degenerate, and we conclude $\varphi^k(x)=\psi^{L}(x)$ for all $x\in X$, \ie $\varphi^k=\psi^{L}$. Thus, $\phi^k$ is a greatest common ancestor of $f$ and $g$.
	\end{proof}
	
	We proceed with first characterization of maps that are kin (or cousins).
	
	\begin{theorem}\label{thm:kin}
		Onto maps $f,g\colon X\to X$ are kin if and only if:
		\begin{enumerate}
			\item $f$ and $g$ commute, and
			\item there exist relatively prime $\alpha,\beta\in\N$ and a homeomorphism $h'\colon X\to X$ which commutes with $f$ and $g$, and for which $f^{\alpha}=h'\circ g^{\beta}$.
		\end{enumerate}
		Onto maps $f,g\colon X\to X$ are cousins if and only if:
		\begin{enumerate}
			\item $f$ and $g$ commute, and
			\item there are relatively prime $\alpha, \beta\in\N$ such that $f^{\alpha}=g^{\beta}$.
		\end{enumerate}
	\end{theorem}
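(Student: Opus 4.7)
First, the forward directions. Suppose $f = h^a\circ\varphi^n$ and $g = h^b\circ\varphi^m$ with the homeomorphism $h$ commuting with $\varphi$. Then all factors commute pairwise, giving $f \circ g = g \circ f$. Letting $k = \gcd(n, m)$, $n = n'k$, $m = m'k$ with $\gcd(n', m') = 1$, the identities $f^{m'} = h^{am'}\varphi^{n'm'k}$ and $g^{n'} = h^{bn'}\varphi^{n'm'k}$ yield $f^{m'} = h^{am' - bn'}\circ g^{n'}$. Setting $\alpha = m'$, $\beta = n'$, and $h' = h^{am' - bn'}$ establishes condition (2); $h'$ is a homeomorphism commuting with $f$ and $g$ since $h$ does (Lemma~\ref{lem:commH} may be invoked implicitly). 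The cousin specialization corresponds to $a = b = 0$, which forces $h' = \mathrm{id}$.

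For the backward direction, the plan is to set $h = h'$, $n = \beta$, $m = \alpha$, and use Bezout's identity to choose integers $a, b \geq 0$ satisfying $a\alpha - b\beta = 1$ (shifting $(a, b) \mapsto (a + k\beta, b + k\alpha)$ if needed to ensure non-negativity). Define $\varphi := g^a \circ f^{-b}$, initially a set-valued function. Using the hypothesis $f^\alpha = h'\circ g^\beta$ and the identity $a\alpha = b\beta + 1$, one verifies that the straight-line composition $g^{a\beta}\circ f^{-b\beta}$ collapses to the single-valued map $h^{-a}f$: for any $z$ with $f^{b\beta}(z) = x$, we have $g^{a\beta}(z) = (g^\beta)^a(z) = h^{-a}\circ f^{a\alpha}(z) = h^{-a}\,f(f^{b\beta}(z)) = h^{-a}f(x)$, independent of $z$. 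An entirely analogous calculation (using $a\alpha = b\beta + 1$ in the form $g^{a\alpha} = g\cdot (g^\beta)^b = h^{-b}\circ g\circ f^{b\alpha}$) shows that $g^{a\alpha}\circ f^{-b\alpha}$ is single-valued and equal to $h^{-b}g$.

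Applying Lemma~\ref{lem:composing} iteratively (equivalently, Corollary~\ref{cor:power}) identifies these straight-line compositions with $\varphi^\beta$ and $\varphi^\alpha$ respectively. Since $\varphi$ is onto ($\varphi(X) = g^a(f^{-b}(X)) = X$ by ontoness of $f$ and $g$) and $\varphi^\alpha$ agrees with the single-valued map $h^{-b}g$, Lemma~\ref{lem:iterate_map} (contrapositive) forces $\varphi$ itself to be single-valued; thus $\varphi$ is a map. Lemma~\ref{lem:commfginv} then ensures $h = h'$ commutes with $\varphi$. Rearranging $\varphi^\beta = h^{-a}f$ and $\varphi^\alpha = h^{-b}g$ yields $f = h^a\circ\varphi^\beta$ and $g = h^b\circ\varphi^\alpha$, exhibiting $f$ and $g$ as kin with data $(\varphi, h, a, b, \beta, \alpha)$. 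The cousin conclusion follows by specializing to $h' = \mathrm{id}$, whence $h = \mathrm{id}$ and $f = \varphi^\beta$, $g = \varphi^\alpha$.

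The principal technical subtlety, and the step I expect to be the main obstacle, is justifying the identification of $\varphi^\alpha$ and $\varphi^\beta$ with the straight-line compositions $g^{a\alpha}\circ f^{-b\alpha}$ and $g^{a\beta}\circ f^{-b\beta}$ before $\varphi$ is known to be single-valued. This requires iterating the commutativity inclusion $g\circ f^{-1} \subseteq f^{-1}\circ g$ from Lemma~\ref{lem:comm} to migrate all inverses into a single block within $(g^a\circ f^{-b})^N$, and then invoking Lemma~\ref{lem:composing}, whose single-valuedness hypothesis is precisely what the explicit evaluations in the previous paragraph secure. Once this set-valued bookkeeping is complete, the bootstrap via Lemma~\ref{lem:iterate_map} delivers single-valuedness of $\varphi$ and the remainder of the proof is algebraic rearrangement.
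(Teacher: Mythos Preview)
Your argument is essentially the paper's: the forward direction is identical, and for the converse you construct $\varphi=g^{a}\circ f^{-b}$ from B\'ezout coefficients, identify $\varphi^{\alpha}$ and $\varphi^{\beta}$ with $h^{-b}g$ and $h^{-a}f$ via Corollary~\ref{cor:power}, and conclude that $\varphi$ is single-valued by Lemma~\ref{lem:iterate_map}. This is exactly the paper's case $\tilde b<0<\tilde a$ (which it leaves as ``similar''), with $\tilde a=a$, $\tilde b=-b$.

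One caution on your last paragraph: the hypothesis of Lemma~\ref{lem:composing} (hence of Corollary~\ref{cor:power}) is single-valuedness of the \emph{mixed} composition, not of the straight-line $g^{aN}\circ f^{-bN}$, and the inclusion from Lemma~\ref{lem:comm} only yields $(g^{a}\circ f^{-b})^{N}(x)\supseteq g^{aN}\circ f^{-bN}(x)$, so your explicit evaluations do not literally ``secure'' that hypothesis in the direction you need. This is, however, precisely the point at which the paper itself applies Corollary~\ref{cor:power} without further justification, so your treatment matches the paper's at this step as well.
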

	\begin{proof}
		Assume that $f$ and $g$ are kin (respectively, assume $f$ and $g$ are cousins). Let $\varphi\colon X\to X$ be a map, $h\colon X\to X$ be a homeomorphism which commutes with $\varphi$ (respectively, we take $h=id$ if we $f,g$ are cousins), and $a,b\geq 0$, $n, m\in\N$ be such that $f=h^a\circ \varphi^{n}$, and $g=h^b\circ\varphi^m$. Note first that, since $h$ and $\varphi$ commute, then $f$ and $g$ commute, and $h$ commutes with both $f$ and $g$. 
		
		Let $k\in\N$ be the greatest common divisor of $n$ and $m$, so that there are relatively prime $n',m'\in\N$ such that $n=n'k, m=m'k$. Then
		$$f^{m'}=(h^a\circ \varphi^n)^{m'}=h^{am'}\circ \varphi^{nm'}=h^{am'-bn'}\circ h^{bn'}\circ \varphi^{mn'}=h^{am'-bn'}\circ (h^b\circ\varphi^m)^{n'}=h'\circ g^{n'},$$
		where $h'=h^{am'-bn'}$ is a homeomorphism. Now, since both $h$ and $h^{-1}$ commute with $f$ and $g$, so does $h'$. (Respectively, if $f,g$ are cousins, so $h=id$, then $h'=id$, so $f^{\alpha}=g^{\beta}$.)
		
		For the other direction, assume that $f$ and $g$ commute, $\alpha,\beta\in\N$ are relatively prime, and $h'\colon X\to X$ is a homeomorphism which commutes with $f$ and $g$ such that $f^{\alpha}=h'\circ g^{\beta}$ (respectively, we take $h'=id$). Let $\tilde a,\tilde b\in\Z$ be such that
		$$\alpha \tilde a+ \beta \tilde b = 1,$$
		and define $\varphi\colon X\to 2^X$ as:
		$$\varphi=\begin{cases}
			f^{\tilde b}\circ g^{\tilde a}, \quad {\tilde a}<0<{\tilde b},\\
			g^{\tilde a}\circ f^{\tilde b}, \quad {\tilde b}<0<{\tilde a}.
		\end{cases}.$$
		
		Note that $\varphi$ is indeed a set-valued function on $X$ since we assumed that $f$ and $g$ are onto.
		
		Assume that ${\tilde a}<0<{\tilde b}$. Since $f^{\alpha}$ and $g^{\beta}$ commute, and $f^{\alpha}=h'\circ g^{\beta}$, Lemma~\ref{lem:commH} implies that $f^{\alpha}\circ h'=h'\circ f^{\alpha}$, and $g^{\beta}\circ h'=h'\circ g^{\beta}$. Then, Corollary~\ref{cor:power} implies 
		$$(f^{\tilde b}\circ g^{\tilde a})^{\alpha}(x)=f^{{\tilde b}\alpha}\circ g^{{\tilde a}\alpha}(x)=(h')^{{\tilde b}}\circ g^{{\tilde b}\beta+{\tilde a}\alpha}(x)=(h')^{{\tilde b}}\circ g(x),$$
		and, since $g^{\beta}=(h')^{-1}\circ f^{\alpha}$, and thus by Lemma~\ref{lem:commH}, $(h')^{-1}$ commutes with $f^{\alpha}$. Then, by Corollary~\ref{cor:power},
		$$(f^{\tilde b}\circ g^{\tilde a})^{\beta}(x)=f^{{\tilde b}\beta}\circ g^{{\tilde a}\beta}(x)=f^{{\tilde b}\beta}\circ (h')^{-{\tilde a}}\circ f^{{\tilde a}\alpha}(x)=f^{{\tilde a}\alpha+{\tilde b}\beta}\circ (h')^{-{\tilde a}}(x)=f(x)\circ (h')^{-{\tilde a}},$$
		for all $x\in X$. 
		
		Thus, for $\varphi:=f^{\tilde b}\circ g^{\tilde a}$, Lemma~\ref{lem:iterate_map} implies $\varphi$ is a map, and since $h'$ commutes with both $f$ and $g$, by Lemma~\ref{lem:commfginv}, we have 
		$$g=(h')^{-\tilde b}\circ \varphi^{\alpha},$$
		$$f=\varphi^{\beta}\circ (h')^{\tilde a}=(h')^{\tilde a}\circ \varphi^{\beta}.$$
		Letting $h=(h')^{-1}$, and $b=\tilde b$, $a=-\tilde a$, it follows that $f$ and $g$ are kin. (Respectively, if $h'=id$, then $h=id$, so it follows that $f=\varphi^{\alpha}$, $g=\varphi^{\beta}$, \ie $f$ and $g$ are cousins). The proof is similar when ${\tilde b}<0<{\tilde a}$.
	\end{proof}

	\section{Euclidean algorithm for maps}\label{sec:alg}
	We first recall a version of the Euclidean algorithm for determining the greatest common divisor of positive integers $n, m\in\N$, $gcd(n,m)$. Given positive integers $n>m$, we define
	$$n_1=n, m_1=m.$$
	For every $i\in\N$ such that $n_i\neq m_i$:
	\begin{enumerate}
		\item if $n_{i}>m_{i}$, we define
		$$n_{i+1}=n_{i}-m_{i}, \quad m_{i+1}=m_{i},$$ 
		\item if $m_{i}>n_{i}$, we define
		$$n_{i+1}=n_{i}, \quad m_{i+1}=m_{i}-n_{i}.$$ 
	\end{enumerate}
	If $N\in\N$ is such that $n_N=m_N$, then $n_M=m_N=gcd(n,m)$.
	
	\begin{example}
		The Euclidean algorithm for $n=34, m=10$ gives
		$$(34,10)\to (24,10)\to (14,10)\to (4,10)\to (4,6)\to (4,2)\to (2,2),$$
		so $gcd(34,10)=2$.
	\end{example}
	
	\begin{lemma}\label{lem:terminate}
		The Euclidean algorithm for $n>m$ terminates in at most $n$ steps. It will take exactly $n$ steps if and only if $m=1$ or $m=n-1$.
	\end{lemma}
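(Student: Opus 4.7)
The plan is to induct on $n$. Write $N(n,m)$ for the length of the sequence produced by the algorithm starting at $(n,m)$, i.e.\ the index $N$ at which $n_N=m_N$. By construction $N(n,m)=1+N(n-m,m)$ whenever $n>m$, and $N$ is symmetric in its arguments because a single Euclidean step depends only on which coordinate is larger. My target is the slightly restructured statement $N(n,m)\le\max(n,m)$, with equality iff the smaller coordinate equals $1$ or the two coordinates differ by $1$; in the form of the lemma this gives the bound $n$ and the equality conditions $m=1$ or $m=n-1$.

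The base case is $n=2$: the only valid $m$ is $m=1$, and the sequence $(2,1),(1,1)$ has $N=2=n$, matching both the bound and the equality condition.

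For the inductive step, I would assume the claim for every pair whose larger coordinate is less than $n$, and treat $(n,m)$ with $n\ge 3$ and $1\le m<n$. After one Euclidean step the pair is $(n-m,m)$, so $N(n,m)=1+N(n-m,m)$. If $m=1$, then $(n-1,1)$ falls under the same case with parameter $n-1$, giving $N(n-1,1)=n-1$ by induction and hence $N(n,1)=n$. If $m=n-1$, the step yields $(1,n-1)$; by symmetry and the previous case applied at parameter $n-1$, $N(1,n-1)=N(n-1,1)=n-1$, so $N(n,n-1)=n$. Finally, if $2\le m\le n-2$, then both coordinates of $(n-m,m)$ are at most $n-2$, so the inductive hypothesis gives $N(n-m,m)\le n-2$ and therefore $N(n,m)\le n-1<n$.

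The three subcases together furnish the upper bound $N(n,m)\le n$ for every $1\le m<n$ and simultaneously identify equality as occurring exactly when $m=1$ or $m=n-1$. The argument is essentially bookkeeping, and the only real choice is the inductive parameter: measuring by the sum $n+m$ is too weak, because one step can reduce it by just $1$, whereas measuring by the maximum coordinate exploits the fact that when $2\le m\le n-2$, subtracting $m$ from $n$ leaves both entries strictly below $n-1$. This is precisely the slack needed to convert the tight bound into a strict inequality in the middle range, which is the only delicate point of the proof.
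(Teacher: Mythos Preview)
Your proof is correct and follows essentially the same approach as the paper: induction on the larger coordinate, with the key observation that one step of the algorithm sends $(n,m)$ to $(n-m,m)$, whose maximum is at most $n-1$ (and at most $n-2$ when $2\le m\le n-2$). The only cosmetic difference is that you package the bound and the equality characterization into a single unified induction, whereas the paper first proves the bound and then separately argues the strict inequality in the middle range.
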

	\begin{proof}
		The proof follows inductively on $n\in\N$. If $n=2$, $m=1$, then $n_2=1$, $m_2=1$, so the algorithm terminates in $n$ steps. Assume that for some $N\in\N$ and all $n\leq N$, the algorithm terminates in at most $n$ steps. Let $n+1\in\N$, and let $m\in\N$, $m<n+1$. Then $n_1=n+1$, $m_1=m$, and $n_2=n+1-m$, $m_2=m$. Since $m\geq 1$, $n_2\leq n$. Since also $m_2=m\leq n$, the induction hypothesis implies that the algorithm will terminate in at most $n+1$ steps.
		
		It is easy to check that the algorithm terminates in exactly $n$ steps if $m=1$ or $m=n-1$. To see that in all other cases the algorithm will terminate in less than $n$ steps, let $n>m$ and $1<m<n-1$. Then $n_1=n$, $m_1=m$, and $n_2=n-m$, $m_2=m$. If $n_2\geq m_2$, the algorithm will terminate in at most $n-m+1<n$ steps, and if $n_2<m_2$, the algorithm will terminate in at most $m+1<n$ steps.
	\end{proof}
	
	The goal of this section is to give a similar algorithm which determines if two onto maps are kin, or cousins, and finds their ancestor.
	To help with this, we first define a relation $\succ$ on the set of onto maps:
	
	\begin{definition}
		Let $f, g\colon X\to X$ be onto maps. We we will write $f\succ g$ if $g^{-1}\circ g(x)\subseteq f^{-1}\circ f(x)$ for all $x\in X$, and there is $x\in X$ such that $g^{-1}\circ g (x)\neq f^{-1}\circ f(x)$. 
	\end{definition}
	Now, we will consider some lemmas.
	
	\begin{lemma}\label{lem:fnfs}
		Let $f,g\colon X\to X$ be onto maps such that $g^{-1}\circ g(x)\subseteq f^{-1}\circ f(x)$ for all $x\in X$. Then there exists a map $\psi\colon X\to X$ such that $f=\psi\circ g$. If $f, g$ additionally commute, then $g$ and $\psi$ also commute. 
	\end{lemma}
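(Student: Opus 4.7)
My plan is to construct $\psi$ directly from the fiber condition, check continuity using that $g$ is a quotient map, and then derive commutativity by right-cancellation of the surjection $g$.

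First, I would define $\psi$ pointwise. For each $y\in X$, since $g$ is onto, pick any $x\in g^{-1}(y)$, and set $\psi(y):=f(x)$. To see this is well-defined, suppose $x_1,x_2\in g^{-1}(y)$. Then $x_2\in g^{-1}(g(x_1))\subseteq f^{-1}(f(x_1))$ by hypothesis, so $f(x_2)=f(x_1)$. By construction, $\psi(g(x))=f(x)$ for every $x\in X$, so $f=\psi\circ g$.

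Next, I would verify continuity of $\psi$. Since $X$ is a continuum (compact Hausdorff) and $g\colon X\to X$ is a continuous surjection, $g$ is a closed map, and hence a quotient map. The hypothesis $g^{-1}\circ g(x)\subseteq f^{-1}\circ f(x)$ says precisely that $f$ is constant on each fiber of $g$, so the set-function $\psi$ defined above is the unique factorization of $f$ through the quotient $g$. The universal property of quotient maps then yields that $\psi$ is continuous. (Equivalently: for any closed $A\subseteq X$, $\psi^{-1}(A)=g(f^{-1}(A))$, which is closed since $g$ is closed and $f$ is continuous.)

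Finally, suppose $f$ and $g$ commute. Then
\begin{equation*}
    (g\circ \psi)\circ g \;=\; g\circ (\psi\circ g) \;=\; g\circ f \;=\; f\circ g \;=\; (\psi\circ g)\circ g \;=\; (\psi\circ g)\circ g.
\end{equation*}
Since $g$ is onto, it is right-cancellable: if $h_1\circ g=h_2\circ g$ then $h_1=h_2$ (evaluating at any $x\in X$, pick a preimage under $g$). Applying this with $h_1=g\circ\psi$ and $h_2=\psi\circ g$ gives $g\circ\psi=\psi\circ g$, so $g$ and $\psi$ commute.

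The main subtlety is the continuity step: the set-theoretic definition of $\psi$ is immediate, but passing from ``$f$ is constant on the fibers of $g$'' to continuity of the induced map relies on $g$ being a quotient map, which here is supplied by the compactness of $X$. The commutativity step is then a clean algebraic consequence of surjectivity of $g$, essentially reproving Lemma~\ref{lem:commH} in this context.
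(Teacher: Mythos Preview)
Your proof is correct and follows essentially the same route as the paper: define $\psi=f\circ g^{-1}$, verify it is single-valued via the fiber inclusion, and deduce commutativity from $f=\psi\circ g$ together with $f\circ g=g\circ f$. The only differences are cosmetic: you handle continuity explicitly via the quotient-map property of $g$ (the paper instead invokes the preliminary remark that a single-valued upper semi-continuous $f\circ g^{-1}$ is automatically a map), and for commutativity you right-cancel $g$ whereas the paper shows $g\circ\psi=f=\psi\circ g$ directly---note that the final ``$=(\psi\circ g)\circ g$'' in your display is redundant and can be dropped.
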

	\begin{proof}
		Assume $g^{-1}\circ g(x)\subseteq f^{-1}\circ f(x)$ for all $x\in X$, and let $\psi\colon X\to 2^X$ be given as $\psi(x)=f\circ g^{-1}(x)$. We will show that $\psi$ is a map. Let $x\in X$, and $t_1, t_2\in X$ be such that $g(t_1)=g(t_2)=x$. Then $t_2\in g^{-1}\circ g(t_1)\subseteq f^{-1}\circ f(t_1)$, so $f(t_2)=f(t_1)$. It follows that $f\circ g^{-1}(x)$ is degenerate, so $\psi$ is a map.
		
		Assume that $f$ and $g$ commute. Then $g\circ \psi=g\circ f\circ g^{-1}=f\circ g\circ g^{-1}=f$. Moreover, for every $x\in X$, $\psi\circ g(x)=f\circ g^{-1}\circ g(x)\supseteq f(x)$. Since $\psi\circ g$ is a map, $\psi\circ g(x)=f(x)=g\circ \psi(x)$.
	\end{proof}
	
	\begin{lemma}\label{lem:fnfmap}
		For onto maps $f, g\colon X\to X$, $f\circ g^{-1}$ is a map if and only if $g^{-1}\circ g(x)\subseteq f^{-1}\circ f(x)$ for all $x\in X$.
	\end{lemma}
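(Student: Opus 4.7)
The plan is to prove the two directions separately; both are essentially unpackings of definitions, with one direction already contained in Lemma~\ref{lem:fnfs}.

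For the $(\Leftarrow)$ direction, I would simply invoke Lemma~\ref{lem:fnfs}: the hypothesis $g^{-1}\circ g(x)\subseteq f^{-1}\circ f(x)$ for all $x\in X$ is precisely what that lemma assumes, and its proof produces a map $\psi$ with $\psi(x)=f\circ g^{-1}(x)$, which is exactly the statement that $f\circ g^{-1}$ is single-valued (hence a map).

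For the $(\Rightarrow)$ direction, assume $f\circ g^{-1}\colon X\to 2^X$ is single-valued. Fix $x\in X$ and let $y\in g^{-1}\circ g(x)$, so that $g(y)=g(x)$. Setting $z:=g(x)=g(y)\in X$, both $x$ and $y$ lie in $g^{-1}(z)$, so both $f(x)$ and $f(y)$ lie in $f(g^{-1}(z))=f\circ g^{-1}(z)$. Since $f\circ g^{-1}(z)$ is a singleton by assumption, $f(x)=f(y)$, and therefore $y\in f^{-1}(f(x))=f^{-1}\circ f(x)$. This gives the required inclusion.

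There is no real obstacle here; the statement is a direct reformulation of what it means for the set-valued composition $f\circ g^{-1}$ to collapse to a single point at each argument, namely that fibres of $g$ must be refined by fibres of $f$. The only points to keep in mind are that we use $g$ onto (so that $g^{-1}(z)$ is non-empty for the chosen $z$, which is automatic since $z=g(x)$) and that no additional structure on $X$ beyond what is assumed in the section is needed.
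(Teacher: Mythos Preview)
Your proof is correct and matches the paper's argument essentially verbatim: both directions are handled identically, with one invoking Lemma~\ref{lem:fnfs} and the other arguing that if $g(x)=g(y)=t$ then $\{f(x),f(y)\}\subseteq f\circ g^{-1}(t)$, which is a singleton.
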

	\begin{proof}
		The only if direction follows from Lemma~\ref{lem:fnfs}. For the other direction, assume $f\circ g^{-1}$ is a map, and let $x\in X$. Let $y\in g^{-1}\circ g(x)$, \ie $g(y)=g(x)=t$. Since $f\circ g^{-1}(t)\supseteq\{f(x),f(y)\}$, and $f\circ g^{-1}(t)$ is degenerate, it follows that $f(y)=f(x)$, \ie $y\in f^{-1}\circ f(x)$. 
	\end{proof}
	
	\begin{corollary}
		Assume $f,g\colon X\to X$ are onto maps. Then $f^{-1}\circ f(x)=g^{-1}\circ g(x)$ for all $x\in X$ if and only if there exists a homeomorphism $h\colon X\to X$ such that $f=h\circ g$. If $f$ and $g$ additionally commute, then there exists a homeomorphism $h\colon X\to X$ such that $f=h\circ g=g\circ h$.
	\end{corollary}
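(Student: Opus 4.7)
The proof splits naturally into three parts: the forward implication, the backward implication, and the extra conclusion under commutativity.

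For the backward direction, assume $f = h\circ g$ for some homeomorphism $h$. Then for every $y\in X$ we have $f^{-1}(y) = g^{-1}(h^{-1}(y))$, so $f^{-1}\circ f(x) = g^{-1}(h^{-1}(h(g(x)))) = g^{-1}\circ g(x)$ for all $x\in X$. This is a one-line computation.

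For the forward direction, I would exploit Lemma~\ref{lem:fnfs} twice. The hypothesis $f^{-1}\circ f(x) = g^{-1}\circ g(x)$ gives both inclusions, so applying Lemma~\ref{lem:fnfs} once yields a map $\psi\colon X\to X$ with $f = \psi\circ g$, and applying it with the roles of $f$ and $g$ reversed yields a map $\psi'\colon X\to X$ with $g = \psi'\circ f$. Composing, $f = \psi\circ\psi'\circ f$, and since $f$ is onto this forces $\psi\circ\psi' = \mathrm{id}_X$; symmetrically $\psi'\circ\psi = \mathrm{id}_X$. Thus $\psi$ is a continuous bijection with continuous inverse $\psi'$, i.e.\ a homeomorphism, and we set $h = \psi$.

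For the final assertion, suppose additionally that $f$ and $g$ commute. Having produced $h$ above with $f = h\circ g$, I can apply Lemma~\ref{lem:commH} directly: it says precisely that if commuting $f,g$ satisfy $f = h\circ g$ for some map $h$, then $g\circ h = h\circ g$. Combining with $f = h\circ g$ gives $f = h\circ g = g\circ h$, as desired.

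The only subtle point is whether $\psi'$ is automatically continuous in the forward direction (it is, by Lemma~\ref{lem:fnfs}), and whether the identity $\psi\circ\psi' = \mathrm{id}_X$ needs the continuum hypothesis; in fact it does not, since $f$ is onto forces the identity everywhere algebraically. So the proof goes through for any surjective functions on a set, with ``homeomorphism'' replaced by ``bijection,'' matching the generality advertised at the start of Section~\ref{sec:kin}.
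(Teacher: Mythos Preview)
Your proof is correct and follows essentially the same approach as the paper: apply Lemma~\ref{lem:fnfs} in both directions to obtain mutually inverse maps, and use surjectivity of $f$ and $g$ to conclude they compose to the identity. The only cosmetic differences are that you spell out the (trivial) backward direction and both identities $\psi\circ\psi'=\mathrm{id}$, $\psi'\circ\psi=\mathrm{id}$ explicitly, and you cite Lemma~\ref{lem:commH} rather than the second clause of Lemma~\ref{lem:fnfs} for the commuting assertion; these lemmas yield the same conclusion here.
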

	\begin{proof}
		Since $g^{-1}\circ g(x)\subseteq f^{-1}\circ f(x)$ for all $x\in X$, there is $h\colon X\to X$ such that $f=h\circ g$. If additionally $f^{-1}\circ f(x)\subseteq g^{-1}\circ g(x)$ for all $x\in X$, then $g=\tilde h\circ f$, where $\tilde h=g\circ f^{-1}$ is a map. Note that $f=h\circ g=h\circ \tilde h\circ f$, so $h\circ\tilde h=id$, which implies that $h$ is a homeomorphism. The additional part follows Lemma~\ref{lem:fnfs}.
	\end{proof}
	
	\begin{lemma}\label{lem:hkmap}
		Let $\psi\colon X\to X$ be an onto map which is not a homeomorphism, and let $k\in\Z$. Then $\psi^{k}$ is a map if and only if $k\geq 0$.
	\end{lemma}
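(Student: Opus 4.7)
The statement has two directions, one of which is essentially immediate. If $k \ge 0$, then $\psi^k$ is just the $k$-fold composition of the map $\psi$ with itself (with the convention $\psi^0 = \mathrm{id}_X$), hence a map. All the content lies in the converse: if $k < 0$, then $\psi^k$ must fail to be single-valued.

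For the nontrivial direction, I would write $k = -n$ with $n \in \N$ and show that $\psi^{-n}$ is not single-valued. The starting observation is that $\psi$, being an onto continuous self-map on $X$ that fails to be a homeomorphism, cannot be injective: otherwise $\psi$ would be a continuous bijection on a compact space and hence a homeomorphism. So there exist $x_1 \ne x_2$ in $X$ with $\psi(x_1) = \psi(x_2) =: y$, and then $\psi^{-1}(y) \supseteq \{x_1, x_2\}$ is non-degenerate, which shows that the set-valued function $F := \psi^{-1} \colon X \to 2^X$ has a non-degenerate value at some point.

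Next, I would check that $F = \psi^{-1}$ is onto in the sense of Section~\ref{sec:prelim}: for every $x \in X$, we need some $z$ with $x \in F(z) = \psi^{-1}(z)$, and taking $z = \psi(x)$ works since $\psi$ is defined on all of $X$. Thus $F$ satisfies the hypotheses of Lemma~\ref{lem:iterate_map}, so for every $n \in \N$ there exists some $x \in X$ with $F^n(x) = \psi^{-n}(x)$ non-degenerate. In particular, $\psi^{-n}$ is not single-valued, so it is not a map.

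The only step with any real content is invoking compactness to upgrade ``not a homeomorphism'' to ``not injective,'' and then feeding the resulting non-degeneracy into Lemma~\ref{lem:iterate_map}; both are routine given the setup, so I expect no serious obstacle.
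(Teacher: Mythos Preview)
Your proof is correct. The paper takes a slightly more direct route: after observing (as you do) that $\psi(x)=\psi(y)$ for some $x\neq y$, it simply notes that for $k<0$ the point $z:=\psi^{-k}(x)=\psi^{-k}(y)$ satisfies $x,y\in\psi^{k}(z)$, so $\psi^k$ is not single-valued---no appeal to Lemma~\ref{lem:iterate_map} is needed. Your approach is equally valid and has the virtue of reusing a lemma already proved; the paper's argument is just one line shorter and exhibits an explicit non-degenerate fibre.
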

	\begin{proof}
		If $k\geq 0$, then $\psi^k$ is obviously a map. Assume that $k<0$. Since $\psi$ is not a homeomorphism, there are $x,y\in X$, $x\neq y$, such that $\psi(x)=\psi(y)$, and consequently $\psi^{-k}(x)=\psi^{-k}(y)=z$. Thus, $x,y\in \psi^k(z)$, which implies that $\psi^k$ is not a map.
	\end{proof}

	\begin{remark}
		By Lemma~\ref{lem:fnfmap}, $f\succ g$, if and only if $f\circ g^{-1}$ is a map, and $g\circ f^{-1}$ is not a map, if and only if $f\circ g^{-1}$ is a map which is not a homeomorphism. Moreover, $f\succ g$ immediately implies that $f$ is not a homeomorphism. 
	\end{remark}

	\begin{lemma}\label{lem:hnhm2}
		Let $\varphi\colon X\to X$ be an onto map which is not a homeomorphism, let $h\colon X\to X$ be a homeomorphism which commutes with $\varphi$, and let $a,b\in\Z$, $a,b\geq 0$, and $n,m\in\N$. Then $h^a\circ \varphi^n\succ h^b\circ \varphi^m$ if and only if $n>m$.
	\end{lemma}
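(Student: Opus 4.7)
By the Remark preceding the lemma (combined with Lemma~\ref{lem:fnfmap}), the relation $f \succ g$ is equivalent to saying that $f \circ g^{-1}$ is a single-valued map which is \emph{not} a homeomorphism. Setting $f = h^a \circ \varphi^n$ and $g = h^b \circ \varphi^m$, the plan is to reduce $f \circ g^{-1}$ to a simple normal form and then distinguish the three cases $n>m$, $n=m$, $n<m$ by inspecting this normal form.

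First I would check that, because $h$ is a homeomorphism that commutes with $\varphi$, all integer powers of $h$ commute (as set-valued functions) with all integer powers of $\varphi$. From $h\circ\varphi=\varphi\circ h$ one immediately gets $\varphi\circ h^{-1}=h^{-1}\circ\varphi$, and iterating (and using $h(\varphi^{-1}(x))=\varphi^{-1}(h(x))$, which follows from the homeomorphism property of $h$) one obtains $h^k\circ\varphi^\ell=\varphi^\ell\circ h^k$ for all $k,\ell\in\Z$. Together with $(h^b\circ\varphi^m)^{-1}=\varphi^{-m}\circ h^{-b}$, this yields
\[
f\circ g^{-1}=h^a\circ\varphi^n\circ\varphi^{-m}\circ h^{-b}=h^{a-b}\circ\bigl(\varphi^n\circ\varphi^{-m}\bigr).
\]

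Next I would simplify $\varphi^n\circ\varphi^{-m}$ as a set-valued function. Since $\varphi$ is onto, $\varphi^k\circ\varphi^{-k}$ is the identity for every $k\ge 0$. Therefore, if $n\geq m$ then $\varphi^n\circ\varphi^{-m}=\varphi^{n-m}\circ(\varphi^m\circ\varphi^{-m})=\varphi^{n-m}$, and if $n<m$ then $\varphi^n\circ\varphi^{-m}=(\varphi^n\circ\varphi^{-n})\circ\varphi^{-(m-n)}=\varphi^{-(m-n)}$. Hence $f\circ g^{-1}=h^{a-b}\circ\varphi^{n-m}$ when $n\ge m$, and $f\circ g^{-1}=h^{a-b}\circ\varphi^{-(m-n)}$ when $n<m$.

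Finally I would argue by cases. If $n>m$, then $\varphi^{n-m}$ is a map, and it fails to be a homeomorphism because $\varphi$ is not injective (if $\varphi(x_1)=\varphi(x_2)$ with $x_1\ne x_2$, surjectivity of $\varphi^{n-m-1}$ produces two distinct points mapped to the same value by $\varphi^{n-m}$); composition with the homeomorphism $h^{a-b}$ preserves both properties, so $f\circ g^{-1}$ is a non-homeomorphism map and $f\succ g$. If $n=m$, then $f\circ g^{-1}=h^{a-b}$ is a homeomorphism, so $f\not\succ g$. If $n<m$, then by Lemma~\ref{lem:hkmap} the set-valued function $\varphi^{-(m-n)}$ is not single-valued, and precomposing with the homeomorphism $h^{a-b}$ preserves this, so $f\circ g^{-1}$ is not a map and $f\not\succ g$. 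This gives both implications.

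\textbf{Main obstacle.} The only nontrivial bookkeeping is justifying the full commutativity of $h^k$ with $\varphi^\ell$ for $\ell<0$ as set-valued functions, and verifying the identity $(h^b\circ\varphi^m)^{-1}=\varphi^{-m}\circ h^{-b}$; both are standard but must be handled cleanly since $\varphi^{-1}$ is genuinely multi-valued. Once these are in place, the rest is a one-line computation plus the three-case inspection.
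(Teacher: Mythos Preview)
Your proposal is correct and follows essentially the same approach as the paper: reduce $f\circ g^{-1}$ and $g\circ f^{-1}$ to the normal forms $h^{a-b}\circ\varphi^{n-m}$ and $h^{b-a}\circ\varphi^{m-n}$, then invoke Lemma~\ref{lem:hkmap}. The paper's proof is simply a compressed version of yours---it asserts the normal form in one line and applies Lemma~\ref{lem:hkmap} directly, whereas you spell out the commutation of $h^k$ with $\varphi^\ell$ for negative $\ell$ and the simplification $\varphi^n\circ\varphi^{-m}=\varphi^{n-m}$ explicitly; this extra bookkeeping is sound and does not change the argument.
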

	\begin{proof}
		Note that $h^a\circ \varphi^n\succ h^b\circ \varphi^m$, if and only if $h^{a-b}\circ\varphi^{n-m}$ is a map, and $h^{b-a}\circ \varphi^{m-n}$ is not a map. By Lemma~\ref{lem:hkmap}, since $h^k$ is a map for every $k\in\Z$, this is true if and only if $n-m\geq 0$, and $m-n<0$. This implies that $n>m$. 
	\end{proof}
	
	\begin{lemma}\label{lem:inlaws}
		Assume that $f,g\colon X\to X$ are commuting onto maps such that $f\neq g$, and at least one of $f$, $g$ is not a homeomorphism. If $f\circ g^{-1}$ and $g\circ f^{-1}$ are both maps (\ie, $f\circ g^{-1}$ is a homeomorphism), then $f$ and $g$ are kin which are not cousins. 
	\end{lemma}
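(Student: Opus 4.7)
The plan is to exhibit $f$ and $g$ as kin using the natural choice $\varphi=g$ together with a single homeomorphism twist, and then to rule out the cousin case by a coprime-exponent argument. First, I would produce the homeomorphism: since $f\circ g^{-1}$ and $g\circ f^{-1}$ are both maps, set $h:=f\circ g^{-1}$ and $h':=g\circ f^{-1}$. The identities $h\circ g=f$ and $h'\circ f=g$ (which hold because $g$ and $f$ are onto and the compositions are single-valued) give $h'\circ h\circ g=g$ and $h\circ h'\circ f=f$; cancelling the onto maps $g$ and $f$ on the right yields $h'\circ h=\mathrm{id}=h\circ h'$, so $h$ is a homeomorphism with $f=h\circ g$. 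Lemma~\ref{lem:commH}, applied to the commuting pair $f,g$, then shows $h$ commutes with both $f$ and $g$. I next observe that $f=h\circ g$ with $h$ a homeomorphism forces $f$ to be a homeomorphism if and only if $g$ is, so the hypothesis that at least one of $f,g$ is not a homeomorphism upgrades to the stronger statement that \emph{neither} is.

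With these pieces in place, the kin decomposition is immediate: take $\varphi:=g$, $a:=1$, $b:=0$, $n:=m:=1$; then $h$ commutes with $\varphi$, and the equalities $f=h^{1}\circ\varphi^{1}$ and $g=h^{0}\circ\varphi^{1}$ match the definition of kin verbatim.

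For the cousin part, I would argue by contradiction. If $f$ and $g$ were cousins, Theorem~\ref{thm:kin} would supply relatively prime $\alpha,\beta\in\N$ with $f^{\alpha}=g^{\beta}$. Substituting $f=h\circ g$ and using that $h$ commutes with $g$, so $(h\circ g)^{\alpha}=h^{\alpha}\circ g^{\alpha}$, this becomes $h^{\alpha}\circ g^{\alpha}=g^{\beta}$. A short case split finishes the argument: if $\alpha=\beta$, coprimality forces $\alpha=\beta=1$, giving $f=g$ and contradicting $f\neq g$; if $\alpha<\beta$, cancelling the onto map $g^{\alpha}$ on the right gives $h^{\alpha}=g^{\beta-\alpha}$; if $\alpha>\beta$, applying $h^{-\alpha}$ on the left and cancelling the onto map $g^{\beta}$ on the right gives $g^{\alpha-\beta}=h^{-\alpha}$. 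In the latter two cases a homeomorphism equals a positive power of $g$, which fails to be injective since $g$ is not. The main obstacle is this cancellation step, which must be justified via surjectivity of the appropriate power of $g$; once that is secured, the mismatch between a homeomorphism and a non-injective map closes the argument.
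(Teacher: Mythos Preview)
Your proof is correct. The kin half matches the paper's argument almost verbatim: both set $h=f\circ g^{-1}$, note it is a homeomorphism, and invoke Lemma~\ref{lem:commH} to get commutativity of $h$ with $f$ and $g$; the only cosmetic difference is that the paper then cites Theorem~\ref{thm:kin} whereas you write down the decomposition $f=h^1\circ g^1$, $g=h^0\circ g^1$ by hand.

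For the ``not cousins'' half you take a genuinely different route. The paper argues directly from the \emph{definition} of cousins: if $f=\varphi^n$ and $g=\varphi^m$ with (say) $n>m$, then the hypothesis forces both $\varphi^{n-m}$ and $\varphi^{m-n}$ to be maps, contradicting Lemma~\ref{lem:hkmap} since $\varphi$ is not a homeomorphism. You instead go through the \emph{characterization} in Theorem~\ref{thm:kin}, obtaining coprime $\alpha,\beta$ with $f^\alpha=g^\beta$, substituting $f=h\circ g$, and cancelling a surjective power of $g$ to conclude that some positive power of $g$ equals a homeomorphism. Both arguments are short and valid; the paper's is slightly more elementary in that it avoids Theorem~\ref{thm:kin} entirely, while yours has the advantage of reusing machinery already developed and of making explicit the useful side observation that \emph{neither} $f$ nor $g$ can be a homeomorphism under the hypotheses.
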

	\begin{proof}
		Assume that $h\colon X\to X$ is a homeomorphism such that $f\circ g^{-1}=h$, \ie $f=h\circ g$. Since $f$ and $g$ commute, Lemma~\ref{lem:commH} implies that $h$ commutes with both $f$ and $g$. Theorem~\ref{thm:kin} implies that $f$ and $g$ are kin.
		
		Assume there is a map $\varphi\colon X\to X$ and there are natural numbers $n>m$ such that $f=\varphi^n$, $g=\varphi^m$. Since $f\circ g^{-1}$ is a map, $f\circ g^{-1}=\varphi^{n-m}$ is a map, and since $g\circ f^{-1}$ is a map , $g\circ f^{-1}=\varphi^{m-n}$ is a map. However, since at least one of $f,g$ is not a homeomorphism, neither is $\varphi$, so $\varphi^{m-n}$ is not a map. That is a contradiction. 
	\end{proof}
	
	\begin{example}
		Let $\varphi,h\colon X\to X$ be onto maps which commute, where $\varphi$ is not a homeomorphism, and $h\neq id$ is a homeomorphism (see, \eg Example~\ref{ex:Anna} for a particular example on $X=[0,1]=I$). Let $f=h^2\circ\varphi$, $g=h\circ\varphi$. Then $f$ and $g$ are commuting onto maps, $f\neq g$ since $h\neq id$, and $f$ and $g$ are not homeomorphisms. Moreover, $f\circ g^{-1}=h$, $g\circ f^{-1}=h^{-1}$ are both homeomorphisms. Lemma~\ref{lem:inlaws} implies that $f$ and $g$ are kin which are not cousins.
	\end{example}
	
	We now give an algorithm, similar to the Euclidean algorithm,  which determines if maps $f,g$ are kin (cousins), and finds maps $h$ and $\varphi$ (or an ancestor if they are cousins). We will assume that $f,g\colon X\to X$ are onto {\bf non-homeomorphisms}. Since every pair of commuting homeomorphisms are kin, this would be redundant in case when $f$ and $g$ are commuting homeomorphisms. Nevertheless, the question when two homeomorphisms are cousins is still interesting. This will be considered later.
	
	Assume that $f,g\colon X\to X$ are onto non-homeomorphisms which commute. We define
	$$\lambda_1=f, \quad \rho_1=g.$$
	For every $i\in\N$ such that $\lambda_i\succ \rho_i$, or $\rho_i\succ \lambda_i$:
	\begin{enumerate}
		\item if $\lambda_i\succ \rho_i$, we define
		$$\lambda_{i+1}=\lambda_{i}\circ\rho^{-1}_{i}, \quad \rho_{i+1}=\rho_{i},$$ 
		\item if $\rho_i\succ\lambda_i$, we define
		$$\lambda_{i+1}=\lambda_{i}, \quad \rho_{i+1}=\rho_{i}\circ\lambda^{-1}_{i}.$$ 
	\end{enumerate}
	If $N\in\N$ is such that $\lambda_N\circ\rho_N^{-1}$ is a homeomorphism $h$, then $f$ and $g$ are kin with $h$ and $\varphi=\lambda_N=h\circ\rho_N$. If $N\in\N$ is such that $\lambda_N=\rho_N$, then $\varphi=\lambda_N=\rho_N$ is an ancestor of $f$ and $g$. We prove and discuss this in the rest of this section.
	
	\begin{remark}
		It can happen that $\lambda_i, \rho_i$ are not defined for some $i\geq 1$, in which case $\lambda_k, \rho_k$ are not defined for all $k\geq i$. This happens if: there is $i>1$ such that $\lambda_j, \rho_j$ are defined for all $1\leq j<i$, and $\lambda_{i-1}\circ \rho_{i-1}^{-1}$ and $\rho_{i-1}\circ \lambda_{i-1}^{-1}$ are either both maps (in fact, homeomorphisms), or neither of them is a map (see example below).
	\end{remark}
	
	\begin{example}\label{ex:tents}
		Let $\lambda_1=f=T_6$ and $\rho_1=g=T_3$, (see Figure~\ref{fig:tents}). 
		\begin{figure}[ht!]
			\centering
			\begin{tikzpicture}[scale=4]
				\draw (0,0)--(0,1)--(1,1)--(1,0)--(0,0);
				\draw (0,0)--(1/2,1)--(1,0);
				\node[above] at (1/2,1) {\small $T_2$};
			\end{tikzpicture}
			\begin{tikzpicture}[scale=4]
				\draw (0,0)--(0,1)--(1,1)--(1,0)--(0,0);
				\draw (0,0)--(1/3,1)--(2/3,0)--(1,1);
				\node[above] at (1/2,1) {\small $T_3$};
			\end{tikzpicture}
			\begin{tikzpicture}[scale=4]
				\draw (0,0)--(0,1)--(1,1)--(1,0)--(0,0);
				\draw (0,0)--(1/6,1)--(2/6,0)--(3/6,1)--(4/6,0)--(5/6,1)--(1,0);
				\node[above] at (1/2,1) {\small $T_6$};
			\end{tikzpicture}
			\caption{Symmetric tent maps $T_2, T_3, T_6$.}
			\label{fig:tents}
		\end{figure}
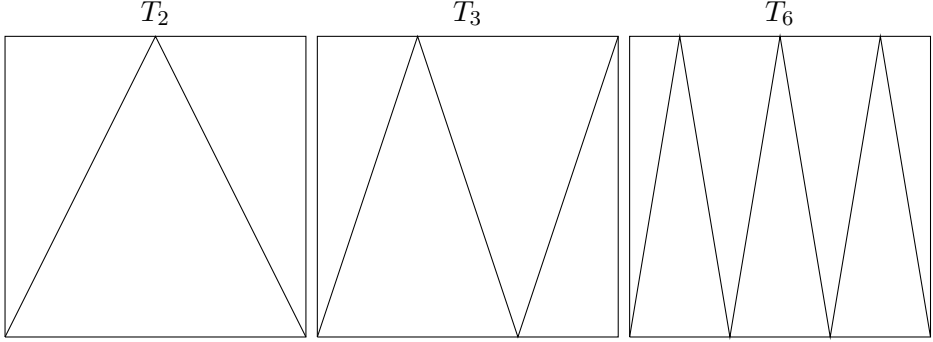
		Then $f\circ g^{-1}=T_2$, and $g\circ f^{-1}=T_2^{-1}$, so $\lambda_1\succ \rho_1$. We can define $\lambda_2=\lambda_1\circ \rho_1^{-1}=T_2$ and $\rho_2=\rho_1=T_3$. However, $\lambda_2\circ \rho_2^{-1}=T_2\circ T_3^{-1}$ and $\rho_2\circ \lambda_2^{-1}=T_3\circ T_2^{-1}$ are not maps, see Figure~\ref{fig:fnfs}. Thus, $\lambda_k, \rho_k$ are not defined $k> 2$. We conclude that $f$ and $g$ are not kin. 
	\end{example}
	
	\begin{figure}
		\begin{tikzpicture}[scale=4]
			\draw (0,0)--(0,1)--(1,1)--(1,0)--(0,0);
			\draw (0,0)--(1,2/3)--(1/2,1)--(0,2/3)--(1,0);
			\node[above] at (1/2,1) {\small $T_2\circ T_3^{-1}$};
		\end{tikzpicture}
		\hspace{5pt}
		\begin{tikzpicture}[scale=4]
			\draw (0,0)--(0,1)--(1,1)--(1,0)--(0,0);
			\draw (0,0)--(2/3,1)--(1,1/2)--(2/3,0)--(0,1);
			\node[above] at (1/2,1) {\small $T_3\circ T_2^{-1}$};
		\end{tikzpicture}
		\caption{Set-valued functions $\lambda_2\circ\rho_2^{-1}$ and $\rho_2\circ\lambda_2^{-1}$ from Example~\ref{ex:tents}. Since none of the set-valued functions is single valued, $\lambda_3$ and $\rho_3$ are not well-defined maps and the algorithm terminates.}
		\label{fig:fnfs}
	\end{figure}
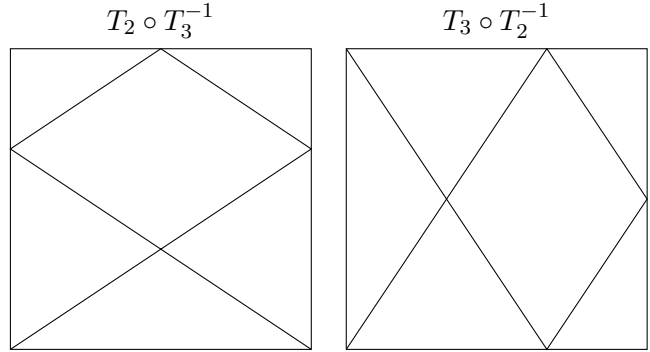

	Maps $\lambda_i, \rho_i$ (when defined), will be of the form $\lambda_i=f^{\ell_f(i)}\circ g^{\ell_g(i)}, \rho_i=g^{r_g(i)}\circ f^{r_f(i)}$, so we can extend the algorithm to include the sequences $(\ell_f(i)), (\ell_g(i)), (r_f(i)), (r_g(i))$ as follows:
	$$\ell_f(1)=1, \quad \ell_g(1)=0; \quad r_f(1)=0, \quad r_g(1)=1.$$
	For every $i\in\N$ such that $\lambda_i\succ \rho_i$, or $\rho_i\succ \lambda_i$:
	\begin{enumerate}
		\item if $\lambda_i\succ \rho_i$, we define
		$$\ell_f(i+1)=\ell_f(i)-r_f(i), \quad \ell_g(i+1)=\ell_g(i)-r_g(i); \quad r_f(i+1)=r_f(i), \quad r_g(i+1)=r_g(i).$$
		\item if $\rho_i\succ\lambda_i$, we define
		$$\ell_f(i+1)=\ell_f(i), \quad \ell_g(i+1)=\ell_g(i); \quad r_f(i+1)=r_f(i)-\ell_f(i), \quad r_g(i+1)=r_g(i)-\ell_g(i).$$
	\end{enumerate}
	
	\begin{lemma}\label{lem:ellr}
		For every $i\in\N$ for which they are defined, 
		$$\ell_f(i)>0, \quad \ell_g(i)\leq 0; \quad r_f(i)\leq 0, \quad r_g(i)> 0.$$
	\end{lemma}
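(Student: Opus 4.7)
The plan is to prove all four inequalities simultaneously by induction on $i\in\N$, exploiting the symmetric roles of the left pair $(\ell_f,\ell_g)$ and right pair $(r_f,r_g)$.

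For the base case $i=1$ the statement is immediate from the initial definitions $\ell_f(1)=1$, $\ell_g(1)=0$, $r_f(1)=0$, $r_g(1)=1$, which satisfy $\ell_f(1)>0$, $\ell_g(1)\leq 0$, $r_f(1)\leq 0$, $r_g(1)>0$.

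For the inductive step, suppose the four inequalities hold at step $i$ and that $\lambda_{i+1}, \rho_{i+1}$ are defined, meaning either $\lambda_i\succ\rho_i$ or $\rho_i\succ\lambda_i$. I would split into those two cases. If $\lambda_i\succ\rho_i$, then $r_f(i+1)=r_f(i)$ and $r_g(i+1)=r_g(i)$ are unchanged, so their sign conditions are inherited; meanwhile $\ell_f(i+1)=\ell_f(i)-r_f(i)\geq \ell_f(i)>0$ since $r_f(i)\leq 0$, and $\ell_g(i+1)=\ell_g(i)-r_g(i)\leq -r_g(i)<0$ since $\ell_g(i)\leq 0$ and $r_g(i)>0$. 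The case $\rho_i\succ\lambda_i$ is symmetric: $\ell_f(i+1),\ell_g(i+1)$ are unchanged, and $r_f(i+1)=r_f(i)-\ell_f(i)\leq -\ell_f(i)<0$ while $r_g(i+1)=r_g(i)-\ell_g(i)\geq r_g(i)>0$.

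This is a routine induction, and I do not expect a genuine obstacle; the main thing to be careful about is keeping the strict versus non-strict inequalities straight, since at every step one side stays pinned to its previous value (preserving $\leq 0$ or $>0$) while the other side strictly increases in absolute value (upgrading a non-strict bound to a strict one, which is still compatible with the stated non-strict bound).
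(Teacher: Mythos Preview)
Your proof is correct and follows essentially the same approach as the paper: both argue by induction on $i$, verify the base case from the initial values, and in the inductive step split into the two cases $\lambda_i\succ\rho_i$ and $\rho_i\succ\lambda_i$, using the sign hypotheses to bound the updated quantities. Your observations about strict versus non-strict inequalities are accurate and mirror exactly what the paper does.
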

	\begin{proof}
		We prove this claim inductively. For $i=1$ it is true by the definition. Assume the claim is true for some $i\geq 1$, and assume $\lambda_{i+1}$ and $\rho_{i+1}$ are well-defined. 
		\begin{enumerate}
			\item Assume $\lambda_i\succ\rho_i$. Then 
			$$\ell_f(i+1)=\ell_f(i)-r_f(i)>0,$$
			since $\ell_f(i)>0$, and $r_f(i)\leq 0$. Also,
			$$\ell_g(i+1)=\ell_g(i)-r_g(i)< 0,$$
			since $\ell_g(i)\leq 0$, and $r_g(i)>0$.
			Furthermore,
			$$r_f(i+1)=r_f(i)\leq 0, \quad \textrm{and } \quad r_g(i+1)=r_g(i)>0.$$
			\item When $\rho_i\succ\lambda_i$ we similarly get 
			$$\ell_f(i+1)>0, \quad \ell_g(i+1)\leq 0; \quad r_f(i+1)\leq 0, \quad r_g(i+1)> 0,$$
			which completes the proof.
		\end{enumerate}
	\end{proof}
	
	\begin{lemma}\label{lem:diff1}
		For each $i\in\N$ for which they are defined,
		$$\ell_f(i)r_g(i)-\ell_g(i)r_f(i)=1.$$
	\end{lemma}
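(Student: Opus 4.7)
The plan is a straightforward induction on $i$, using the fact that the identity $\ell_f(i) r_g(i) - \ell_g(i) r_f(i) = 1$ is precisely the determinant of the integer $2 \times 2$ matrix
\[
M_i = \begin{pmatrix} \ell_f(i) & \ell_g(i) \\ r_f(i) & r_g(i) \end{pmatrix},
\]
and the two update rules of the algorithm correspond to left-multiplying $M_i$ by a unimodular matrix.

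First, I would verify the base case $i = 1$: by definition $\ell_f(1) = 1$, $\ell_g(1) = 0$, $r_f(1) = 0$, $r_g(1) = 1$, so $\ell_f(1) r_g(1) - \ell_g(1) r_f(1) = 1 \cdot 1 - 0 \cdot 0 = 1$.

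For the inductive step, assume the identity holds at stage $i$ and that $\lambda_{i+1}, \rho_{i+1}$ are defined. In the case $\lambda_i \succ \rho_i$, the updates give $\ell_f(i+1) = \ell_f(i) - r_f(i)$, $\ell_g(i+1) = \ell_g(i) - r_g(i)$, while $r_f(i+1) = r_f(i)$ and $r_g(i+1) = r_g(i)$, so
\[
\ell_f(i+1) r_g(i+1) - \ell_g(i+1) r_f(i+1) = \bigl(\ell_f(i) - r_f(i)\bigr) r_g(i) - \bigl(\ell_g(i) - r_g(i)\bigr) r_f(i),
\]
and the cross terms $-r_f(i) r_g(i) + r_g(i) r_f(i)$ cancel, leaving $\ell_f(i) r_g(i) - \ell_g(i) r_f(i) = 1$ by the inductive hypothesis. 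The case $\rho_i \succ \lambda_i$ is symmetric: one gets
\[
\ell_f(i+1) r_g(i+1) - \ell_g(i+1) r_f(i+1) = \ell_f(i) \bigl(r_g(i) - \ell_g(i)\bigr) - \ell_g(i) \bigl(r_f(i) - \ell_f(i)\bigr),
\]
and again the added terms $-\ell_f(i) \ell_g(i) + \ell_g(i) \ell_f(i)$ cancel, giving $1$.

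There is no real obstacle here: the lemma is a bookkeeping identity, and both cases of the induction reduce to the same cancellation of the extra term introduced by subtracting one row of $M_i$ from the other. The only thing worth noting is that the statement is conditional on $\ell_f(i), \ell_g(i), r_f(i), r_g(i)$ being defined, i.e.\ the algorithm has not yet terminated, which is exactly the hypothesis under which the inductive updates apply.
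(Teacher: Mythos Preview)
Your proof is correct and follows essentially the same approach as the paper: induction on $i$, verifying the base case from the initial values, and in each of the two update cases expanding the expression and observing that the extra terms cancel. The determinant/unimodular-matrix interpretation you add is a nice conceptual gloss, but the underlying computation is identical to the paper's.
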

	\begin{proof}
		We proceed inductively. The claim is obviously true for $i=1$. Assume it is true for some $i\geq 1$, and assume $\lambda_{i+1}, \rho_{i+1}$ are well defined. 
		\begin{enumerate}
			\item Assume $\lambda_i\succ\rho_i$. Then
			$$\ell_f(i+1)r_g(i+1)-\ell_g(i+1)r_f(i+1)=(\ell_f(i)-r_f(i))r_g(i)-(\ell_g(i)-r_g(i))r_f(i)=1.$$
			\item Assume $\rho_i\succ\lambda_i$. Then
			$$\ell_f(i+1)r_g(i+1)-\ell_g(i+1)r_f(i+1)=\ell_f(i)(r_g(i)-\ell_g(i))-\ell_g(i)(r_f(i)-\ell_f(i))=1.$$
		\end{enumerate}
	\end{proof}

	\begin{lemma}\label{lem:lambdarho}
		Assume that $f,g\colon X\to X$ are onto non-homeomorphisms which commute. If $i\in\N$ is such that $\lambda_i$ and $\rho_i$ are well-defined, then
		$$\lambda_i=f^{\ell_f(i)}\circ g^{\ell_g(i)}, \quad \rho_i=g^{r_g(i)}\circ f^{r_f(i)}.$$
	\end{lemma}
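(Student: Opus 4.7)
The plan is to prove the statement by straightforward induction on $i$, using Lemma~\ref{lem:composing} at the inductive step to collapse the four-term compositions that arise, and Lemma~\ref{lem:ellr} to verify that the resulting exponents have the signs required by Lemma~\ref{lem:composing}.

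First I would check the base case $i=1$: by the initializations $\ell_f(1)=1,\ \ell_g(1)=0,\ r_f(1)=0,\ r_g(1)=1$, we have $f^{\ell_f(1)}\circ g^{\ell_g(1)}=f=\lambda_1$ and $g^{r_g(1)}\circ f^{r_f(1)}=g=\rho_1$.

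For the inductive step, assume the formulas hold at stage $i$ and that $\lambda_{i+1},\rho_{i+1}$ are well-defined. There are two cases. Suppose $\lambda_i\succ\rho_i$, so $\rho_{i+1}=\rho_i$ (which trivially gives the claim for $\rho_{i+1}$ since $r_f(i+1)=r_f(i)$ and $r_g(i+1)=r_g(i)$) and
\[
\lambda_{i+1}=\lambda_i\circ\rho_i^{-1}=f^{\ell_f(i)}\circ g^{\ell_g(i)}\circ f^{-r_f(i)}\circ g^{-r_g(i)}.
\]
By Lemma~\ref{lem:ellr} we have $\ell_f(i)\ge 0$, $-\ell_g(i)\ge 0$, $-r_f(i)\ge 0$, and $-r_g(i)\ge 0$, so this expression has the form $f^{a_1}\circ g^{-a_2}\circ f^{b_1}\circ g^{-b_2}$ with non-negative integer exponents; since $\lambda_{i+1}$ is by hypothesis single-valued, Lemma~\ref{lem:composing} collapses it to $f^{\ell_f(i)-r_f(i)}\circ g^{\ell_g(i)-r_g(i)}=f^{\ell_f(i+1)}\circ g^{\ell_g(i+1)}$, as desired. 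In the other case $\rho_i\succ\lambda_i$, the map $\lambda_{i+1}=\lambda_i$ is unchanged, and a symmetric computation with the roles of $f$ and $g$ swapped (Lemma~\ref{lem:composing} holds symmetrically, as $f$ and $g$ play equivalent roles since they commute and are both onto) yields
\[
\rho_{i+1}=\rho_i\circ\lambda_i^{-1}=g^{r_g(i)}\circ f^{r_f(i)}\circ g^{-\ell_g(i)}\circ f^{-\ell_f(i)}=g^{r_g(i)-\ell_g(i)}\circ f^{r_f(i)-\ell_f(i)}=g^{r_g(i+1)}\circ f^{r_f(i+1)}.
\]

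There is no real obstacle beyond careful bookkeeping: the main point is that Lemma~\ref{lem:ellr} guarantees the two exponents on $f$ are non-negative and the two on $g$ are non-positive (or vice versa in the second case), which is exactly what is needed to apply Lemma~\ref{lem:composing}. The well-definedness of $\lambda_{i+1}$ (respectively $\rho_{i+1}$) as a single-valued map is given by hypothesis, which supplies the single-valuedness assumption of Lemma~\ref{lem:composing}.
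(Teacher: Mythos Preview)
Your proof is correct and follows exactly the paper's inductive argument via Lemma~\ref{lem:composing}; the paper's version is in fact terser and does not explicitly invoke Lemma~\ref{lem:ellr} to check the sign hypotheses, so your write-up is slightly more careful. One small slip: you wrote $-r_g(i)\ge 0$, but you mean $r_g(i)\ge 0$ (so that $b_2=r_g(i)\in\N_0$ in the notation of Lemma~\ref{lem:composing}); the conclusion you draw from it is nonetheless correct.
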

	\begin{proof}
		The claim is obviously true for $i=1$. Assume it is true for some $i\geq 1$, and assume that $\lambda_{i+1}, \rho_{i+1}$ are well-defined, so $\lambda_i\succ \rho_i$, or $\rho_i\succ\lambda_i$.
		\begin{enumerate}
			\item Assume $\lambda_i\succ \rho_i$. Then
			$$
			{\begin{split}
					\lambda_{i+1} & =\lambda_i\circ\rho_i^{-1}=f^{\ell_f(i)}\circ g^{\ell_g(i)}\circ f^{-r_f(i)}\circ g^{-r_g(i)}=(\textrm{Lemma~\ref{lem:composing}, $f,g$ commute})\\
					& =f^{\ell_f(i)-r_f(i)}\circ g^{\ell_g(i)-r_g(i)}=f^{\ell_f(i+1)}\circ g^{\ell_g(i+1)},
			\end{split}}$$
			and
			$$
			{\begin{split}
					\rho_{i+1} =\rho_i=g^{r_g(i)}\circ f^{r_f(i)}=g^{r_g(i+1)}\circ f^{r_f(i+1)}.
			\end{split}}$$
			\item We proceed similarly when $\rho_i\succ \lambda_i$. 
		\end{enumerate}
	\end{proof}

	\begin{theorem}\label{thm:cousins}
		Assume that $f,g\colon X\to X$ are onto non-homeomorphisms such that $f\neq g$. Then $f$ and $g$ are kin if and only if $f$ and $g$ commute and there is $N\in\N$ such that $\lambda_i, \rho_i$ are well-defined for all $1\leq i\leq N$, and $\lambda_N\circ(\rho_N)^{-1}$ is a homeomorphism. \\
		Maps $f$ and $g$ are cousins if and only if $f$ and $g$ commute and there is $N\in\N$ such that $\lambda_i, \rho_i$ are well-defined for all $1\leq i\leq N$, and $\lambda_N=\rho_N$. 
	\end{theorem}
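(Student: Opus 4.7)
The plan is to prove both directions of each equivalence. For sufficiency, suppose the algorithm terminates at step $N$ with $h' := \lambda_N \circ \rho_N^{-1}$ a homeomorphism. By Lemma~\ref{lem:lambdarho}, $\lambda_N = f^{\ell_f}\circ g^{\ell_g}$ and $\rho_N = g^{r_g}\circ f^{r_f}$; the sign pattern of the exponents is supplied by Lemma~\ref{lem:ellr}, and Lemma~\ref{lem:diff1} yields the determinant identity $\ell_f r_g - \ell_g r_f = 1$. Set $\alpha := \ell_f - r_f$ and $\beta := r_g - \ell_g$, both positive integers. Reversing the composition in $\rho_N^{-1}$ and applying Lemma~\ref{lem:composing} (using that $f$ and $g$ commute) I get $h' = f^{\alpha}\circ g^{-\beta}$, i.e.\ $f^{\alpha} = h'\circ g^{\beta}$. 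A direct calculation gives $\alpha r_g + \beta r_f = \ell_f r_g - \ell_g r_f = 1$, so $\gcd(\alpha,\beta) = 1$. Since $f$ commutes with $f^\alpha$ and with $g^\beta$, and $h' = f^\alpha \circ g^{-\beta}$ is single-valued, Lemma~\ref{lem:commfginv} shows $f$ commutes with $h'$; the same argument applies to $g$. Theorem~\ref{thm:kin} then concludes that $f$ and $g$ are kin. For cousins, $\lambda_N = \rho_N$ forces $h' = id$, hence $f^\alpha = g^\beta$, and the cousin clause of Theorem~\ref{thm:kin} applies.

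For necessity, write $f = h^a \circ \varphi^n$ and $g = h^b \circ \varphi^m$ with $\varphi$ a map, $h$ a homeomorphism commuting with $\varphi$, $a,b \geq 0$, and $n,m \in \N$. The plan is to run the classical Euclidean algorithm on $(n,m)$ in parallel with the map algorithm and prove by induction on $i$ that $\lambda_i = h^{a_i}\circ \varphi^{n_i}$ and $\rho_i = h^{b_i}\circ \varphi^{m_i}$, where $(n_i, m_i)$ is the $i$-th state of the classical algorithm and $(a_i, b_i)$ is a matching pair of integer $h$-exponents. By Lemma~\ref{lem:hnhm2}, $\lambda_i \succ \rho_i$ holds if and only if $n_i > m_i$, so both algorithms take the same branch at each step. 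The inductive step reduces to the identity $\lambda_i\circ \rho_i^{-1} = h^{a_i - b_i}\circ \varphi^{n_i - m_i}$, which is a map because $n_i - m_i \geq 0$ on the dominant side while $h^{a_i - b_i}$ is automatically a map (as $h$ is a homeomorphism). Lemma~\ref{lem:terminate} produces a finite $N$ with $n_N = m_N$, at which point $\lambda_N \circ \rho_N^{-1} = h^{a_N - b_N}$ is a homeomorphism. If $f$ and $g$ are cousins, then one may take $h = id$, so $h^{a_N - b_N} = id$ and $\lambda_N = \rho_N$.

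The principal technical obstacle is the sufficiency direction, specifically verifying that $h'$ commutes with both $f$ and $g$. This is not automatic from the definition of $h'$ as a composition of $f,g$ and their set-valued inverses, and it is exactly the ingredient required for Theorem~\ref{thm:kin} to apply; the fix is a targeted application of Lemma~\ref{lem:commfginv} to $h' = f^\alpha \circ g^{-\beta}$. The necessity direction, by contrast, is a careful bookkeeping argument: the invariant $\lambda_i = h^{a_i}\circ \varphi^{n_i}$, $\rho_i = h^{b_i}\circ \varphi^{m_i}$ mirrors the classical Euclidean algorithm via Lemma~\ref{lem:hnhm2} and guarantees that each step stays single-valued.
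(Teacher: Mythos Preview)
Your proof is correct and follows essentially the same approach as the paper: the sufficiency direction extracts the relation $f^{\alpha}=h'\circ g^{\beta}$ with $\gcd(\alpha,\beta)=1$ from the exponent bookkeeping of Lemmas~\ref{lem:ellr}, \ref{lem:diff1}, \ref{lem:lambdarho} and then invokes Theorem~\ref{thm:kin}, while the necessity direction runs the integer Euclidean algorithm on $(n,m)$ in parallel with the map algorithm via Lemma~\ref{lem:hnhm2}. Your explicit use of Lemma~\ref{lem:commfginv} to verify that $h'$ commutes with $f$ and $g$ is in fact a cleaner justification of that step than the reference the paper gives.
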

	\begin{proof}
		
		Assume first that $f, g$ are kin (respectively cousins), so there are $a,b\geq 0, n,m\in\N$, map $\varphi\colon X\to X$ and a homeomorphism $h\colon X\to X$ (if we assume $f,g$ are cousins, we take $h=id$) which commutes with $\varphi$, such that $f=h^a\circ\varphi^n, g=h^b\circ\varphi^m$. Note that, since $h$ and $\varphi$ commute, so do $f$ and $g$.
		
		If $n=m$, then $f\circ g^{-1}=\lambda_1\circ\rho_1^{-1}=h^{a-b}$, so we are done.
		
		Assume without loss of generality that $n>m$. Let $n_1, \ldots, n_N\in\N$ and $m_1, \ldots, m_N\in\N$ be positive integers obtained from the Euclidean algorithm on $n, m$. By Lemma~\ref{lem:hnhm2}, it is easy to see that there are $a_i, b_i\in\Z$ such that $\lambda_i=h^{a_i}\circ \varphi^{n_i}, \rho_i=h^{b_i}\circ \varphi^{m_i}$ for every $i\in\{1, \ldots, N\}$, and $\lambda_N=h^{a_N}\circ \varphi^k$, $\rho_N=h^{b_N}\circ\varphi^k$, where $k=gcd(n,m)$. Thus, $\lambda_N\circ\rho_N^{-1}=h^{a_N-b_N}$, which is a homeomorphism. (If $h=id$, then $\lambda_N=\rho_N$).
		
		For the other direction, assume that $f,g\colon X\to X$ are onto non-homeomorphisms which commute and there is $N\in\N$ such that $\lambda_i, \rho_i$ are well-defined for all $1\leq i\leq N$, and $\lambda_N\circ\rho_N^{-1}$ is a homeomorphism, denote it by $h'$. (Respectively, $h'=id$).
		
		Lemma~\ref{lem:lambdarho} implies that $\lambda_N=f^{\ell_f(N)}\circ g^{\ell_g(N)}$, and $\rho_N=g^{r_g(N)}\circ f^{r_f(N)}$. By Lemma~\ref{lem:ellr}, we know that $\ell_f(N), r_g(N)> 0$, and $\ell_g(N),r_f(N)\leq 0$. For simplicity we will write $\ell_f=\ell_f(N), \ell_g=\ell_g(N), r_f=r_f(N), r_g=r_g(N)$ in the rest of the proof.
		
		Since we assumed that $\lambda_N\circ\rho_N^{-1}=h'$, we get
		$$f^{\ell_f}\circ g^{\ell_g}=h'\circ g^{r_g}\circ f^{r_f}.$$ 
		Since $f$ and $g$ commute, by composing first with $g^{-\ell_g}$ on the right, using Lemma~\ref{lem:composing}, and then composing with $f^{-r_f}$ on the right, we get
		$$f^{\ell_f-r_f}=h'\circ g^{r_g-\ell_g}.$$
		Denote by $m=\ell_f-r_f$, and $n=r_g-\ell_g$, so $f^m=h'\circ g^n$. Note that by Lemma~\ref{lem:composing}, $h'$ commutes with both $f$ and $g$.
		
		By Lemma~\ref{lem:diff1}, we have $n\ell_f+m\ell_g=r_g\ell_f-\ell_g\ell_f+\ell_f\ell_g-r_f\ell_g=r_g\ell_f-r_f\ell_g=1$. Thus, $n$ and $m$ are relatively prime and we conclude that $f$ and $g$ are kin from Theorem~\ref{thm:kin}. (Respectively, if $h'=id$, we conclude that $f$ and $g$ are cousins using second part of Theorem~\ref{thm:kin}). 
	\end{proof}
	
	\begin{example}\label{ex:big}
		Assume maps $f,g\colon I\to I$ are given by their graphs in Figure~\ref{fig:big}.
		
		We observe that $f\circ g^{-1}$ is a map, and $g\circ f^{-1}$ is not a map, see Figure~\ref{fig:big}. Thus, $f\succ g$, and we set $\lambda_1=f$, $\rho_1=g$, and $\lambda_2=\lambda_1\circ \rho_1^{-1}$, $\rho_2=\rho_1$. 
		We further note that $\rho_2\circ \lambda_2^{-1}$ is a map, but $\lambda_2\circ \rho_2^{-1}$ is not (again see Figure~\ref{fig:big}). Thus, $\rho_2\succ \lambda_2$, and we set $\lambda_3=\lambda_2$, and $\rho_3=\rho_2\circ \lambda_2^{-1}$.
		We conclude similarly that $\lambda_3\succ\rho_3$, and define $\lambda_4=\lambda_3\circ\rho_3^{-1}$, $\rho_4=\rho_3$ (see Figure~\ref{fig:big}). Finally, since $\lambda_4=\rho_4$, we conclude that $f$ and $g$ are cousins, and one of their ancestors is $\lambda_4=\rho_4$. 
	\end{example}

	\begin{figure}
		\includegraphics[scale=1.5]{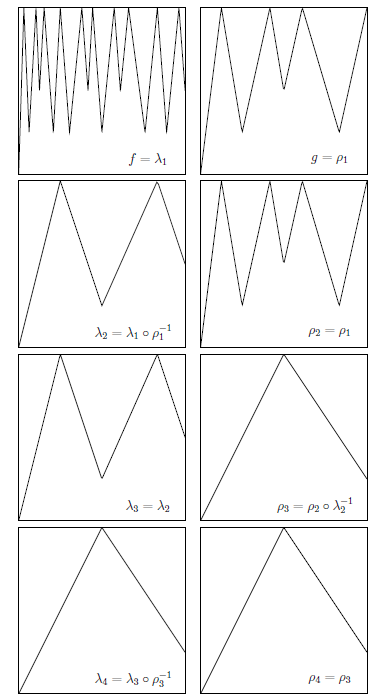}
		\caption{Steps in the algorithm applied to functions $f,g$ from Example~\ref{ex:big}. Since $\lambda_4=\rho_4$, we conclude that $f$ and $g$ are cousins, and $\lambda_4(=\rho_4)$ is one of their ancestors.}
		\label{fig:big}
	\end{figure}
	
	\begin{example}
		Let $\varphi\colon X\to X$ be an onto map which is not a homeomorphism, and let $f=\varphi^{30}, g=\varphi^8$. Obviously, $f$ and $g$ are cousins. The algorithm proceeds as follows (compare to Euclidean algorithm for numbers 30 and 8).
		\begin{center}
			\begin{tabular}{|c|c|c|c|c|c|c|c|} 
				\hline
				$i$ & $\lambda_i$ & $\rho_i$ & $\ell_f(i)$ & $\ell_g(i)$ & $r_g(i)$ & $r_f(i)$ \\ [0.5ex] 
				\hline
				1 & $\varphi^{30}=f^1g^0$ & $\varphi^8=g^1f^0$ & 1 & 0 & 1 & 0 \\ 
				\hline
				2 & $\varphi^{22}=f^1g^{-1}$ & $\varphi^8=g^1f^0$ & 1 & -1 & 1 & 0 \\ 
				\hline
				3 & $\varphi^{14}=f^1g^{-2}$ & $\varphi^8=g^1f^0$ & 1 & -2 & 1 & 0 \\ 
				\hline
				4 & $\varphi^6=f^1g^{-3}$ & $\varphi^8=g^1f^0$ & 1 & -3 & 1 & 0 \\ 
				\hline
				5 & $\varphi^6=f^1g^{-3}$ & $\varphi^2=g^4f^{-1}$ & 1 & -3 & 4 & -1 \\
				\hline
				6 & $\varphi^4=f^2g^{-7}$ & $\varphi^2=g^4f^{-1}$ & 2 & -7 & 4 & -1 \\
				\hline
				7 & $\varphi^2=f^3g^{-11}$ & $\varphi^2=g^4f^{-1}$ & 3 & -11 & 4 & -1 \\
				\hline
			\end{tabular}
		\end{center}
		We note that $\lambda_7=\rho_7=\varphi^2$, so $\varphi^2$ is a common ancestor of $f$ and $g$, and for $m=\ell_f(7)-r_f(7)=4$, and $n=r_g(7)-\ell_g(7)=15$, we get $(\varphi^2)^n=f$, and $(\varphi^2)^m=g$.
	\end{example}
	
	\begin{remark}
		If $f,g$ are not homeomorphisms, and there is a map $\varphi$ such that $f=\varphi^n$, $g=\varphi^m$, $n>m$, according to Lemma~\ref{lem:terminate}, this algorithm will end in at most $n$ steps (it will take the longest in the case when $m=1$ or $m=n-1$). Thus, if the algorithm does not end in $N$ steps, we can conclude that if $f,g$ are cousins, then for every ancestor $\varphi$ of $f$ and $g$, $f=\varphi^n$, $g=\varphi^m$ can only happen when at least one of $n,m > N$. 
	\end{remark}
	
	\begin{example}\label{ex:tent}
		Let $f,g\colon I\to I$, $f=T_2$ and $g=4x(1-x)$. Then $f^{-1}\circ f=g^{-1}\circ g$, so there is a homeomorphism $h\colon I\to I$ such that $f=h\circ g$. However, $f$ and $g$ do not commute, so they are not kin. See Figure~\ref{fig:logistic}.
	\end{example}
	
	\begin{figure}[ht!]
		\centering
		\begin{tikzpicture}[scale=4]
			\draw (0,0)--(0,1)--(1,1)--(1,0)--(0,0);
			\draw[blue, thick] (0,0)--(1/2,1)--(1,0);
			\node[above] at (1/2,1) {\small \blue{$f$} and \red{$g$}};
			\draw[domain=0:1, smooth, variable=\x, red, thick]
			plot ({\x}, {4*\x*(1-\x)});
		\end{tikzpicture}
		\hspace{5pt}
		\begin{tikzpicture}[scale=4]
			\draw (0,0)--(0,1)--(1,1)--(1,0)--(0,0);
			\draw[thick] (0,0)--(1,1);
			\draw[thick] (0,1)--(1,0);
			\node[above] at (1/2,1) {\small $f^{-1}\circ f=g^{-1}\circ g$};
		\end{tikzpicture}
		\hspace{5pt}
		\begin{tikzpicture}[scale=4]
			\draw (0,0)--(0,1)--(1,1)--(1,0)--(0,0);
			\draw[domain=0:0.1464, smooth, variable=\x, blue, thick]
			plot ({\x}, {8*\x*(1-\x)});
			\draw[domain=0.1464:0.8536, smooth, variable=\x, blue, thick]
			plot ({\x}, {2-8*\x*(1-\x)});
			\draw[domain=0.8536:1, smooth, variable=\x, blue, thick]
			plot ({\x}, {8*\x*(1-\x)});
			\draw[domain=0:0.5, smooth, variable=\x, red, thick]
			plot ({\x}, {8*\x*(1-2*\x)});
			\draw[domain=0.5:1, smooth, variable=\x, red, thick]
			plot ({\x}, {8*(1-\x)*(2*\x-1)});
			\node[above] at (1/2,1) {\small \blue{$f\circ g$} and \red{$g\circ f$}};
		\end{tikzpicture}
		\caption{Maps $f,g\colon I\to I$ from Example~\ref{ex:tent}.}
		\label{fig:logistic}
	\end{figure}
	
	\begin{example}
		It can happen that the algorithm never terminates. For example, let $f,g\colon I\to I$ be onto non-homeomorphisms given by Figure~\ref{fig:noterm} below. It is easy to check that $f\circ g^{-n}=f$ for all $n\in\N$, so $f\succ g^n$ for all $n\in\N$. Thus, $\lambda_n=f$, and $\rho_n=g$ for all $n\in\N$. Note that $f$ and $g$ do not commute.
		\begin{figure}[ht!]
			\begin{tikzpicture}[scale=5]
				\draw (0,0)--(0,1)--(1,1)--(1,0)--(0,0);
				\draw[thick] (0,0)--(1/9,1)--(2/9,0)--(1/3,1/2)--(2/3,1/2)--(7/9,1)--(8/9,0)--(1,1);
				\draw[thin, dashed] (1/3,0)--(1/3,1);
				\draw[thin, dashed] (2/3,0)--(2/3,1);
				\node[below] at (1/2,1) {$f$};
			\end{tikzpicture}
			\hspace{5pt}
			\begin{tikzpicture}[scale=5]
				\draw (0,0)--(0,1)--(1,1)--(1,0)--(0,0);
				\draw[thick] (0,0)--(1/3,1/3)--(4/9,2/3)--(5/9,1/3)--(2/3,2/3)--(1,1);
				\draw[thin, dashed] (1/3,0)--(1/3,1);
				\draw[thin, dashed] (2/3,0)--(2/3,1);
				\draw[thin, dashed] (0,1/3)--(1,1/3);
				\draw[thin, dashed] (0,2/3)--(1,2/3);
				\node[below] at (1/2,1) {$g$};
			\end{tikzpicture}
			\caption{Maps $f,g\colon I\to I$ such that $f\succ g^n$ for all $n\in\N$.}
			\label{fig:noterm}
		\end{figure}
	\end{example}
	
	Below we provide another example of non-homeomorphisms $f,g\colon X\to X$ for which the algorithm never terminates, and which are nowhere-constant.

	\begin{example}\label{ex:fg}
		We will construct onto maps $f,g\colon I\to I$ such that $g$ is piecewise monotone, $f$ and $g$ commute, and for all $n\in\N$, $f\circ g^{-n}$ is a map which is not a homeomorphism.
		
		Let $0\leq a<b\leq 1$, and $0\leq c<d\leq 1$. We define a map $L_{[a,b]}^{[c,d]}\colon [a,b]\to [c,d]$ as
		$$L_{[a,b]}^{[c,d]}(x):=\frac{d-c}{b-a}(x-a)+c,$$
		for all $x\in[a,b]$. Note that $L_{[a,b]}^{[c,d]}$ is simply the orientation-preserving linear map from $[a,b]$ to $[c,d]$, and
		$$\left(L_{[a,b]}^{[c,d]}\right)^{-1}=L_{[c,d]}^{[a,b]}.$$
		
		Given an onto map $\psi\colon I\to I$, and $0\leq a<b\leq 1$, and $0\leq c<d\leq 1$, we define $\psi_{[a,b]}^{[c,d]}\colon [a,b]\to [c,d]$ as
		$$\psi_{[a,b]}^{[c,d]}:=L_I^{[c,d]}\circ \psi\circ L_{[a,b]}^{I}.$$
		
		\begin{claim}\label{cl:phis}
			Given $0\leq a<b\leq 1$, $0\leq c<d\leq 1$, and $0\leq c'<d'\leq 1$,
			$$\psi_{[a,b]}^{[c,d]}\circ \left(\psi_{[a,b]}^{[c',d']}\right)^{-1}=L_{[c',d']}^{[c,d]}.$$
		\end{claim}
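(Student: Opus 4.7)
The plan is to unwind the definitions and compose, using only two nontrivial facts: that the orientation-preserving linear maps between intervals compose as expected and are each other's inverses, and that $\psi \circ \psi^{-1} = \mathrm{id}_I$ as a set-valued composition whenever $\psi$ is onto.

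First I would write out both sides. By definition,
\begin{equation*}
\psi_{[a,b]}^{[c,d]} = L_I^{[c,d]} \circ \psi \circ L_{[a,b]}^I, \qquad \psi_{[a,b]}^{[c',d']} = L_I^{[c',d']} \circ \psi \circ L_{[a,b]}^I.
\end{equation*}
Taking inverses as set-valued functions (and using the observation, recorded in the definition, that $(L_{[a,b]}^{[c,d]})^{-1} = L_{[c,d]}^{[a,b]}$) gives
\begin{equation*}
\bigl(\psi_{[a,b]}^{[c',d']}\bigr)^{-1} = L_I^{[a,b]} \circ \psi^{-1} \circ L_{[c',d']}^I.
\end{equation*}

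Next I would compose and simplify. We have
\begin{equation*}
\psi_{[a,b]}^{[c,d]}\circ \bigl(\psi_{[a,b]}^{[c',d']}\bigr)^{-1} = L_I^{[c,d]} \circ \psi \circ L_{[a,b]}^I \circ L_I^{[a,b]} \circ \psi^{-1} \circ L_{[c',d']}^I.
\end{equation*}
The middle pair $L_{[a,b]}^I \circ L_I^{[a,b]}$ is the identity on $I$ since each is the inverse of the other. Then the remaining $\psi \circ \psi^{-1}$ collapses to $\mathrm{id}_I$, because $\psi$ is onto so every fiber $\psi^{-1}(x)$ is non-empty and $\psi$ maps it into $\{x\}$. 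Hence the composition reduces to $L_I^{[c,d]} \circ L_{[c',d']}^I$.

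Finally I would observe that the composition of two orientation-preserving linear bijections between intervals is again an orientation-preserving linear bijection, so $L_I^{[c,d]} \circ L_{[c',d']}^I$ is a linear map from $[c',d']$ onto $[c,d]$; by uniqueness of such a map, it equals $L_{[c',d']}^{[c,d]}$ (one can verify this by a one-line substitution into the defining formula). This yields the claim. I do not expect a real obstacle here: the only subtlety worth flagging explicitly is that $\psi \circ \psi^{-1} = \mathrm{id}_I$ uses both surjectivity of $\psi$ (so the set-valued preimage is everywhere non-empty) and the convention that composition with a set-valued map means union over the image, so that $\psi$ applied to the whole fiber $\psi^{-1}(x)$ produces exactly $\{x\}$.
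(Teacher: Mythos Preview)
Your proof is correct and follows essentially the same argument as the paper: expand the definitions, invert, cancel $L_{[a,b]}^I\circ L_I^{[a,b]}$ and then $\psi\circ\psi^{-1}$, and identify $L_I^{[c,d]}\circ L_{[c',d']}^I$ with $L_{[c',d']}^{[c,d]}$. The only difference is that you spell out more explicitly why $\psi\circ\psi^{-1}=\mathrm{id}_I$ and why the final composition of linear maps equals $L_{[c',d']}^{[c,d]}$, which the paper leaves implicit.
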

		\begin{proof}[Proof of Claim~\ref{cl:phis}]
			We have
			\begin{equation*}
				\begin{split}
					\psi_{[a,b]}^{[c,d]}\circ \left(\psi_{[a,b]}^{[c',d']}\right)^{-1} & =L_I^{[c,d]}\circ\psi\circ L_{[a,b]}^I\circ \left(L_I^{[c',d']}\circ\psi\circ L_{[a,b]}^I\right)^{-1}\\
					&=L_I^{[c,d]}\circ\psi\circ L_{[a,b]}^I\circ L_I^{[a,b]}\circ\psi^{-1}\circ L_{[c',d']}^I\\
					&=L_I^{[c,d]}\circ id_I \circ L_{[c',d']}^I=L_{[c',d']}^{[c,d]}.
				\end{split}
			\end{equation*}
		\end{proof}
		
		We take and piecewise monotone onto map $\psi\colon I\to I$, which is not a homeomorphism, such that $\psi(0)=0, \psi(1)=1$, and
		$$0<\ldots<a_{-2}<b_{-2}<a_{-1}<b_{-1}<a_0<b_0<a_1<b_1<a_2<b_2<\ldots<1,$$
		such that $a_{-i}\to 0$, and $a_i\to 1$ as $i\to\infty$.
		For every $k\in\Z$ we let	
		$$I_k:=[a_k,b_k], \quad \text{and } J_k:=[b_k, a_{k+1}].$$
		Now we define $f,g\colon I\to I$,
		$$f(x):=\begin{cases}
			0, & x=0,\\
			L_{J_k}^{J_{k+1}}(x), & x\in J_k,\\
			\psi_{I_k}^{I_{k+1}}(x), & x\in I_k, k\neq -1,\\
			L_{I_{-1}}^{I_0}(x), & x\in I_{-1},\\
			1, & x=1.
		\end{cases}$$
		$$g(x):=\begin{cases}
			0, & x=0,\\
			L_{J_k}^{J_{k-1}}(x), & x\in J_k,\\
			L_{I_k}^{I_{k-1}}(x), & x\in I_k, k\neq 0,\\
			\psi_{I_0}^{I_{-1}}(x), & x\in I_0,\\
			1, & x=1.
		\end{cases}$$
		Note that $f|_{I_k}\colon I_k\to I_{k+1}, f|_{J_k}\colon J_k\to J_{k+1}$, and $g|_{I_k}\colon I_k\to I_{k-1}, g|_{J_k}\colon J_k\to J_{k-1}$, for all $k\in\Z$. In particular, $f(a_k)=a_{k+1}, f(b_k)=b_{k+1}, g(a_k)=a_{k-1}, g(b_k)=b_{k-1}$ for all $k\in\Z$, and $f(0)=g(0)=0, f(1)=g(1)=1$, so $f$ and $g$ are continuous and onto. Note that $g$ is piecewise monotone. With a careful choice of $a_k, b_k$, $k\in\Z$, and $\psi\colon I\to I$, we could make $g$ piecewise linear if desired. For example, see Figure~\ref{fig:fg}.

		\begin{figure}[ht!]
			\begin{tikzpicture}[scale=8]
				\draw (0,0)--(0,1)--(1,1)--(1,0)--(0,0);
				\draw[dashed, thin] (0,0)--(1,1);
				
				
				\newcommand*\List{{2/256, 2/128, 2/64, 2/32, 2/16, 2/8, 3/8, 5/8, 6/8, 14/16, 30/32, 62/64, 126/128, 254/256}}
				\foreach \i in {0,...,10} {
					\draw[dashed] (\List[\i],\List[\i+1]) rectangle (\List[\i+1],\List[ \i+2]);
				}
				\foreach \i in {2,...,12} {
					\draw[dashed] (\List[\i],\List[\i-1]) rectangle (\List[\i-1],\List[ \i-2]);
				}
				\foreach \i in {2,...,6} {
					\draw[thick, blue] (\List[\i],\List[\i-1])--(\List[\i-1],\List[ \i-2]);
				}
				\foreach \i in {6}{
					\pgfmathparse{\List[\i]}
					\let\b\pgfmathresult		
					\pgfmathparse{\List[\i+1]}
					\let\c\pgfmathresult
					\pgfmathparse{(\c - \b)/3}
					\let\r\pgfmathresult
					\draw[thick, blue] (\List[\i], \List[\i-1])--(\List[\i]+\r, \List[\i])--(\List[\i]+\r+\r, \List[\i-1])--(\List[\i+1], \List[\i]);
				}
				\foreach \i in {8,...,12} {
					\draw[thick, blue] (\List[\i],\List[\i-1])--(\List[\i-1],\List[ \i-2]);
				}		
				
				\foreach \i in {0,...,4}{
					\pgfmathparse{\List[\i]}
					\let\b\pgfmathresult		
					\pgfmathparse{\List[\i+1]}
					\let\c\pgfmathresult
					\pgfmathparse{(\c - \b)/3}
					\let\r\pgfmathresult
					\draw[thick, red] (\List[\i], \List[\i+1])--(\List[\i]+\r, \List[\i+2])--(\List[\i]+\r+\r, \List[\i+1])--(\List[\i+1], \List[\i+2]);
				}
				\foreach \i in {5}{
					\draw[thick, red] (\List[\i],\List[\i+1])--(\List[\i+1],\List[\i+2]);
				}
				\foreach \i in {6,...,11}{
					\pgfmathparse{\List[\i]}
					\let\b\pgfmathresult		
					\pgfmathparse{\List[\i+1]}
					\let\c\pgfmathresult
					\pgfmathparse{(\c - \b)/3}
					\let\r\pgfmathresult
					\draw[thick, red] (\List[\i], \List[\i+1])--(\List[\i]+\r, \List[\i+2])--(\List[\i]+\r+\r, \List[\i+1])--(\List[\i+1], \List[\i+2]);
				}
			\end{tikzpicture}
			\caption{Maps $f\colon I\to I$ (in red) and $g\colon I\to I$ (in blue) which commute and for which $f\circ g^{-n}$ is a map which is not a homeomorphism for each $n\in\N$ as in Example~\ref{ex:fg}. Here $0<\ldots a_{-2}<b_{-2}=a_{-1}<b_{-1}=a_0<b_0=a_1<b_1=a_2<\ldots 1$, and map $\psi\colon I\to I$ is a symmetric 3-tent map.}
			\label{fig:fg}
		\end{figure}

		\begin{claim}\label{cl:fgcomm}
			$f$ and $g$ commute.
		\end{claim}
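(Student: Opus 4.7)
The plan is to verify $f\circ g=g\circ f$ piecewise, using the decomposition
$I=\{0\}\cup\bigcup_{k\in\Z}(I_k\cup J_k)\cup\{1\}$. From the definitions, $f$ sends $I_k$ to $I_{k+1}$ and $J_k$ to $J_{k+1}$, while $g$ sends $I_k$ to $I_{k-1}$ and $J_k$ to $J_{k-1}$. Consequently both $f\circ g$ and $g\circ f$ fix each piece of the decomposition setwise, and both fix the endpoints $0$ and $1$. Thus it suffices to compare the two compositions on each $I_k$ and each $J_k$ separately.

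On each $J_k$, both restrictions are compositions of orientation-preserving linear bijections $L_{J_k}^{J_{k\pm 1}}$, and the elementary identity $L_{[c,d]}^{[a,b]}\circ L_{[a,b]}^{[c,d]}=\mathrm{id}_{[a,b]}$ makes both $f\circ g|_{J_k}$ and $g\circ f|_{J_k}$ equal to the identity on $J_k$.

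On each $I_k$, I would split into three sub-cases, dictated by the two exceptional indices in the definition of $f$ and $g$: $k=-1$ (where $f|_{I_{-1}}$ is linear rather than $\psi$-twisted) and $k=0$ (where $g|_{I_0}$ is $\psi$-twisted rather than linear). The engine of each sub-case is the pair of simplification rules
$$\psi_{[a',b']}^{[c,d]}\circ L_{[a,b]}^{[a',b']}=\psi_{[a,b]}^{[c,d]},\qquad L_{[c',d']}^{[c,d]}\circ\psi_{[a,b]}^{[c',d']}=\psi_{[a,b]}^{[c,d]},$$
both of which follow immediately from the definition $\psi_{[a,b]}^{[c,d]}=L_I^{[c,d]}\circ\psi\circ L_{[a,b]}^I$ together with the fact that composing two orientation-preserving linear bijections of intervals produces the orientation-preserving linear bijection between the outer intervals. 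In each of the three sub-cases, I expect both $f\circ g|_{I_k}$ and $g\circ f|_{I_k}$ to reduce, via the appropriate rule, to the same $\psi$-twisted self-map $\psi_{I_k}^{I_k}$.

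The main obstacle I anticipate is purely bookkeeping: the exceptional index for $f$ ($k=-1$) and the exceptional index for $g$ ($k=0$) occur asymmetrically in $f\circ g$ (which visits $I_{k-1}$ in the middle) and $g\circ f$ (which visits $I_{k+1}$ in the middle). In each sub-case one must verify that this alignment leaves exactly one of the two restrictions involved $\psi$-twisted and the other one linear, so that the corresponding simplification rule applies; once the case distinctions are set up correctly, each case reduces to a single application of one of the two rules above, and the resulting expressions $\psi_{I_k}^{I_k}$ on the two sides match.
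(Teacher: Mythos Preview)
Your proposal is correct and follows essentially the same route as the paper: both decompose $I$ into $\{0,1\}\cup\bigcup_k(I_k\cup J_k)$, dispatch $J_k$ by showing both compositions restrict to the identity, and handle $I_k$ in the three sub-cases generic, $k=0$, $k=-1$, showing in each that both $f\circ g|_{I_k}$ and $g\circ f|_{I_k}$ reduce to $\psi_{I_k}^{I_k}$. The only stylistic difference is that you package the computation into the two reduction rules $\psi_{[a',b']}^{[c,d]}\circ L_{[a,b]}^{[a',b']}=\psi_{[a,b]}^{[c,d]}$ and $L_{[c',d']}^{[c,d]}\circ\psi_{[a,b]}^{[c',d']}=\psi_{[a,b]}^{[c,d]}$, while the paper expands each case explicitly from the definition of $\psi_{[a,b]}^{[c,d]}$; your formulation is marginally cleaner but the content is the same.
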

		\begin{proof}[Proof of Claim~\ref{cl:fgcomm}]
			Let $k\in\Z$, and $x\in J_k$. We have
			$$f\circ g(x)=f(L_{J_k}^{J_{k-1}}(x))=L_{J_{k-1}}^{J_k}(L_{J_k}^{J_{k-1}}(x))=x=L_{J_{k+1}}^{J_k}(L_{J_k}^{J_{k+1}}(x))=g(L_{J_k}^{J_{k+1}}(x))=g\circ f(x).$$
			
			Let $k\in\Z$, $k\neq 0,1$, and $x\in I_k$. We have
			$$f\circ g(x)=f(L_{I_k}^{I_{k-1}}(x))=\psi_{I_{k-1}}^{I_k}\circ L_{I_k}^{I_{k-1}}(x)=L_I^{I_k}\circ\psi\circ L_{I_{k-1}}^{I}\circ L_{I_k}^{I_{k-1}}(x)=L_I^{I_k}\circ\psi\circ L_{I_k}^{I}(x),$$
			$$g\circ f(x)=g(\psi_{I_k}^{I_{k+1}}(x))=L_{I_{k+1}}^{I_k}\circ\psi_{I_k}^{I_{k+1}}(x)=L_{I_{k+1}}^{I_k}\circ L_I^{I_{k+1}}\circ\psi\circ L_{I_k}^I(x)=L_I^{I_k}\circ\psi\circ L_{I_k}^{I}(x),$$
			so $f\circ g(x)=g\circ f(x)$.
			
			Let $x\in I_0$. Then
			$$f\circ g(x)=f(\psi_{I_0}^{I_{-1}}(x))=L_{I_{-1}}^{I_0}\circ L_I^{I_{-1}}\circ\psi\circ L_{I_0}^I(x)=L_I^{I_0}\circ\psi\circ L_{I_0}^I(x),$$
			$$g\circ f(x)=g(\psi_{I_0}^{I_1}(x))=L_{I_1}^{I_0}\circ L_I^{I_1}\circ \psi\circ L_{I_0}^I(x)=L_I^{I_0}\circ\psi\circ L_{I_0}^I(x),$$
			so $f\circ g(x)=g\circ f(x)$.
			
			Let $x\in I_{-1}$. Then
			$$f\circ g(x)=f(L_{I_{-1}}^{I_{-2}}(x))=\psi_{I_{-2}}^{I_{-1}}\circ L_{I_{-1}}^{I_{-2}}(x)=L_{I}^{I_{-1}}\circ\psi\circ L_{I_{-2}}^I\circ L_{I_{-1}}^{I_{-2}}(x)=L_I^{I_{-1}}\circ\psi\circ L_{I_{-1}}^I(x),$$
			$$g\circ f(x)=g(L_{I_{-1}}^{I_0}(x))=\psi_{I_0}^{I_{-1}}\circ L_{I_{-1}}^{I_0}(x)=L_I^{I_{-1}}\circ\psi\circ L_{I_0}^I\circ L_{I_{-1}}^{I_0}(x)=L_I^{I_{-1}}\circ\psi\circ L_{I_{-1}}^I(x),$$
			so $f\circ g(x)=g\circ f(x)$.
			
			Note that $g\circ f|_{I_k}=f\circ g|_{I_k}=\psi_{I_k}^{I_k}$, and $g\circ f|_{J_k}=f\circ g|_{J_k}=id_{J_k}$ for all $k\in\Z$.
		\end{proof}
		
		\begin{claim}\label{cl:itg}
			For every $n\in\N$, 
			$$g^n|_{I_k}=\begin{cases}
				\psi_{I_k}^{I_{k-n}}, & k\in\{0,1,\ldots, n-1\},\\
				L_{I_k}^{I_{k-n}}, & k\in\Z\setminus\{0,1,\ldots,n-1\}.
			\end{cases},$$
			and
			$$g^n|_{J_k}=L_{J_k}^{J_{k-n}},$$
			for all $k\in\Z$.
		\end{claim}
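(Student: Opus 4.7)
The plan is to prove the claim by induction on $n$. The base case $n=1$ follows directly from the definition of $g$: for $k \neq 0$ we have $g|_{I_k} = L_{I_k}^{I_{k-1}}$, and for $k = 0$ (the only $k$ in $\{0,\ldots,n-1\} = \{0\}$) we have $g|_{I_0} = \psi_{I_0}^{I_{-1}}$, matching the formula for both cases. Similarly $g|_{J_k} = L_{J_k}^{J_{k-1}}$ is immediate.

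Before the inductive step, I would record two compositional identities, both proved by unfolding the definition $\psi_{[a,b]}^{[c,d]} = L_I^{[c,d]} \circ \psi \circ L_{[a,b]}^I$ and using $L_{[a,b]}^I \circ L_I^{[a,b]} = id_I$ together with the standard fact $L_{[c,d]}^{[e,f]} \circ L_{[a,b]}^{[c,d]} = L_{[a,b]}^{[e,f]}$:
\begin{equation*}
\psi_{I_c}^{I_e} \circ L_{I_a}^{I_c} = \psi_{I_a}^{I_e}, \qquad L_{I_c}^{I_e} \circ \psi_{I_a}^{I_c} = \psi_{I_a}^{I_e}.
\end{equation*}
These are the analogues of Claim~\ref{cl:phis} and are the only algebraic tools needed.

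For the inductive step, assume the claim holds for some $n \in \N$, and write $g^{n+1}|_{I_k} = g|_{I_{k-n}} \circ g^n|_{I_k}$ (and similarly on $J_k$). The $J_k$ computation is immediate: $g^{n+1}|_{J_k} = L_{J_{k-n}}^{J_{k-n-1}} \circ L_{J_k}^{J_{k-n}} = L_{J_k}^{J_{k-(n+1)}}$. For $I_k$ I would split into three sub-cases based on where $g^n|_{I_k}$ lands relative to $I_0$:
\begin{enumerate}
\item If $k \in \{0,1,\ldots,n-1\}$, then by hypothesis $g^n|_{I_k} = \psi_{I_k}^{I_{k-n}}$ lands in $I_{k-n}$ with $k - n < 0$, so $g$ acts by $L_{I_{k-n}}^{I_{k-n-1}}$ and the second identity above gives $\psi_{I_k}^{I_{k-(n+1)}}$, as required.
\item If $k = n$, then $g^n|_{I_n} = L_{I_n}^{I_0}$ lands in $I_0$, where $g$ acts by $\psi_{I_0}^{I_{-1}}$; the first identity gives $\psi_{I_n}^{I_{-1}} = \psi_{I_n}^{I_{n-(n+1)}}$, producing the new nonlinear entry predicted by the formula.
\item If $k \notin \{0,1,\ldots,n\}$, then $k - n \neq 0$, so $g$ acts linearly on $I_{k-n}$ and we compose two linear maps to obtain $L_{I_k}^{I_{k-(n+1)}}$.
\end{enumerate}

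The only conceptually interesting step is case 2: this is where the $\psi$-singularity of $g$ (localized at $I_0$) gets ``absorbed'' into the iterate, explaining why the set of indices carrying a $\psi$-piece grows by one at each iteration. Everything else is a mechanical verification using the two compositional identities above, so no serious obstacle is anticipated.
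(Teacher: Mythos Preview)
Your proof is correct and follows essentially the same idea as the paper's. The paper unfolds the full composition $g^n|_{I_k}=g|_{I_{k-n+1}}\circ\cdots\circ g|_{I_k}$ directly and observes that the $\psi$-factor appears exactly when $0\in\{k,k-1,\ldots,k-n+1\}$, whereas you organize the same computation inductively; both arguments reduce to the same compositional identities $\psi_{I_c}^{I_e}\circ L_{I_a}^{I_c}=\psi_{I_a}^{I_e}$ and $L_{I_c}^{I_e}\circ\psi_{I_a}^{I_c}=\psi_{I_a}^{I_e}$.
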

		\begin{proof}[Proof of Claim~\ref{cl:itg}]
			Note that $g^n|{I_k}=g|_{I_{k-n+1}}\circ\ldots\circ g|_{I_{k-1}}\circ g|_{I_k}$. Since $g|_{I_l}=L_{I_l}^{I_{l-1}}$ whenever $l\neq 0$, we immediately get $g^n|_{I_k}=L_{I_k}^{I_{k-n}}$ if $0\not\in\{k, k-1, \ldots, k-n+1\}$, \ie $k\not\in\{0, 1, \ldots, n-1\}$. We similarly get $g^n|_{J_k}=L_{J_k}^{J_{k-n}}$ for all $k\in\Z$.
			
			Assume that $k\in\{0,1,\ldots, n-1\}$, \ie $0\in\{k, k-1, \ldots, k-n+1\}$. Then $g|_{I_{k-l}}=\psi_{I_0}^{I_{-1}}$ for some $l\in\{0,1,\ldots, n-1\}$, and
			\begin{equation*}
				\begin{split}
					g^n|_{I_k} & =L_{I_{k-n+1}}^{I_{k-n}}\circ\ldots\circ L_{I_{k-l-1}}^{I_{k-l-2}}\circ\psi_{I_{k-l}}^{I_{k-l-1}}\circ L_{I_{k-l+1}}^{I_{k-l}}\circ\ldots\circ L_{I_k}^{I_{k-1}}\\
					& = L_{I_{k-l-1}}^{I_{k-n}}\circ L_{I}^{I_{k-l-1}}\circ\psi\circ L_{I_{k-l}}^{I}\circ L_{I_k}^{I_{k-l}} = L_{I}^{I_{k-n}}\circ\psi\circ L_{I_{k}}^{I}\\
					& = \psi_{I_k}^{I_{k-n}}.
				\end{split}
			\end{equation*}
		\end{proof}
		
		\begin{claim}\label{cl:neverending}
			$f\circ g^{-n}$ is a map which is not a homeomorphism for every $n\in\N$. 
		\end{claim}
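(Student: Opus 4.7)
The plan is to verify both halves of Claim~\ref{cl:neverending} via Lemma~\ref{lem:fnfmap} applied to the pair $(f, g^n)$, with Claim~\ref{cl:itg} providing all needed information about the iterates of $g$.

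For single-valuedness of $f\circ g^{-n}$, I will show that $g^{-n}\circ g^n(x) \subseteq f^{-1}\circ f(x)$ for every $x \in I$. By Claim~\ref{cl:itg}, $g^n$ is linear (hence injective) on each $J_k$ and on each $I_k$ with $k \notin \{0,1,\ldots,n-1\}$; moreover $g^n$ maps $I_l$ onto $I_{l-n}$ and $J_l$ onto $J_{l-n}$, and these image intervals have pairwise disjoint interiors. Thus any coincidence $g^n(x) = g^n(y)$ with $x \neq y$ must occur inside a single interval $I_k$ with $k \in \{0,\ldots,n-1\}$, where $g^n|_{I_k} = \psi_{I_k}^{I_{k-n}}$ forces $\psi(L_{I_k}^I(x)) = \psi(L_{I_k}^I(y))$. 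Since $k \geq 0 \neq -1$, we have $f|_{I_k} = \psi_{I_k}^{I_{k+1}} = L_I^{I_{k+1}}\circ \psi \circ L_{I_k}^I$, so the same $\psi$-equality yields $f(x) = f(y)$, establishing the desired inclusion.

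For the non-homeomorphism conclusion, I will exhibit two distinct points with the same image under $f \circ g^{-n}$. Choose any $k \geq n$; then $g^n|_{I_k}$ is linear and hence a bijection $I_k \to I_{k-n}$. Because $\psi$ is not a homeomorphism, neither is $f|_{I_k} = \psi_{I_k}^{I_{k+1}}$, so I can pick $x_1, x_2$ in the interior of $I_k$ with $x_1 \neq x_2$ and $f(x_1) = f(x_2)$. Setting $z_i := g^n(x_i)$ gives $z_1 \neq z_2$ in the interior of $I_{k-n}$. Since the image intervals under $g^n$ have disjoint interiors and $g^n|_{I_k}$ is injective, $g^{-n}(z_i) = \{x_i\}$, so $(f\circ g^{-n})(z_i) = f(x_i)$ and hence $(f\circ g^{-n})(z_1) = (f\circ g^{-n})(z_2)$, contradicting injectivity.

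The main delicate point will be keeping track of the shared endpoints $a_k$ (right end of $J_{k-1}$, left end of $I_k$) and $b_k$ (right end of $I_k$, left end of $J_k$); this is why I insist that $x_1, x_2$ lie in the interior of $I_k$ in the second step, so that $z_1, z_2$ avoid the endpoints of $I_{k-n}$ where distinct pieces could conceivably meet. Beyond this bookkeeping, the argument uses nothing beyond Claim~\ref{cl:itg}, the explicit formulas for $f$ from Example~\ref{ex:fg}, and the fact that $\psi$ is not injective.
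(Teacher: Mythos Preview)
Your argument is correct, but it differs from the paper's proof in both halves. For single-valuedness, the paper does not go through Lemma~\ref{lem:fnfmap}; instead it computes $f\circ (g^n|_{I_k})^{-1}$ directly for each $k\in\{0,\ldots,n-1\}$, invoking Claim~\ref{cl:phis} to obtain the explicit formula $\psi_{I_k}^{I_{k+1}}\circ (\psi_{I_k}^{I_{k-n}})^{-1}=L_{I_{k-n}}^{I_{k+1}}$, which is visibly a map. For the non-homeomorphism part, the paper works on $I_0$ rather than on $I_k$ with $k\geq n$: it notes $g^n|_{I_n}=L_{I_n}^{I_0}$ and hence $f\circ g^{-n}|_{I_0}=\psi_{I_n}^{I_{n+1}}\circ L_{I_0}^{I_n}$, which inherits non-injectivity from $\psi$. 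The paper's route is shorter because Claim~\ref{cl:phis} absorbs the algebra and yields the piecewise-linear structure of $f\circ g^{-n}$ explicitly; your route trades this for Lemma~\ref{lem:fnfmap} and a point-set analysis, which is perfectly valid but requires the extra bookkeeping you flag (checking that coincidences at the shared endpoints $a_k,b_k$ still land in a single $I_k$, and choosing $x_1,x_2$ in the interior so that $g^{-n}(z_i)$ is a singleton). Your version has the minor advantage of not relying on Claim~\ref{cl:phis} at all.
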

		\begin{proof}[Proof of Claim~\ref{cl:neverending}]
			Let $n\in\N$. By Claim~\ref{cl:itg}, since $g^n|_{J_k}$ is a homeomorphism for all $k\in\Z$, and $g^n|_{I_k}$ is a homeomorphism for all $k\in\Z\setminus\{0,1,\ldots,n-1\}$, we only need to check that $f\circ (g^n|_{I_k})^{-1}$ is a map for $k\in\{0,1,\ldots,n-1\}$. Note that if $n\in\N$ and $k\in\{0,1,\ldots,n-1\}$, then $k\neq -1$, so $f|_{I_k}=\psi_{I_k}^{I_{k+1}}$ (since $k\neq -1$), and $g^n|_{I_k}\psi_{I_k}^{I_{k-n}}$ by Claim~\ref{cl:itg}. Thus,
			$$f\circ (g^n|_{I_k})^{-1}=\psi_{I_k}^{I_{k+1}}\circ (\psi_{I_k}^{I_{k-n}})^{-1}=L_{I_{k-n}}^{I_{k+1}},$$
			where the last equality follows from Claim~\ref{cl:phis}. 
			
			To prove that $f\circ g^{-n}$ is not a homeomorphism for every $n\in\N$, note, for example, that $g^n|_{I_n}=L_{I_n}^{I_0}$, so $f\circ g^{-n}|_{I_0}=f\circ L_{I_0}^{I_n}=\psi_{I_n}^{I_{n+1}}\circ L_{I_0}^{I_n}$, which is not a homeomorphism, since $\psi$ is a non-homeomorphism.
		\end{proof}
		
		Thus, $\lambda_n=f\circ g^{-n+1}$, and $\rho_n=g$ for all $n\in\N$ in the algorithm above, so it never terminates. 
	\end{example}

	\begin{remark}
		Let $f,g\colon I\to I$ be onto non-homeomorphisms which are piecewise monotone. Assume that $n\in\N$ is such that $\lambda_i, \rho_i$ are well-defined for all $1\leq i\leq n$. Then $\lambda_{n+1}, \rho_{n+1}$ is well-defined if and only if exactly one of $\lambda_n\circ\rho_n^{-1}$ or $\rho_n\circ\lambda_n^{-1}$ is a map (which is not a homeomorphism). Assume $\lambda_n\succ \rho_n$, so $\lambda_n\circ\rho_n^{-1}$ is a map which is not a homeomorphism. Then the number of critical points of $\lambda_{n+1}=\lambda_n\circ\rho_n^{-1}$ is strictly less than the number of critical points of $\lambda_n$. Since $f$ and $g$ have finitely many critical points, the algorithm will terminate in a finite number of steps. Actually, of $n_f$ is the number of critical points of $f$, and $n_g$ is the number of critical points of $g$, the algorithm will terminate in at most $\max\{n_f,n_g\}$ steps.
	\end{remark}
	
	\begin{remark}
		Recall that in order to algorithmically check whether maps $f,g\colon X\to X$ are kin using Theorem~\ref{thm:cousins}, we needed to assume that $f$ and $g$ are {\bf non-homeomorphisms}. In light of Remark~\ref{rem:homeo},  note that homeomorphisms $f$ and $g$ are kin if and only if they commute. However, determining when two homeomorphisms are cousins is less trivial. For example, let $f, g\colon I\to I$ be such that $f=id_I$, and $g\neq id_I$ is any orientation-preserving homeomorphism. Then $f$ and $g$ commute (so they are kin), but $f^{\alpha}\neq g^{\beta}$ for all $\alpha, \beta\in\N$, so they are not cousins according to Theorem~\ref{thm:kin}. If $f$ and $g$ are homeomorphisms, to design an algorithm to check if they are cousins, we need a different and more appropriate definition of the relation $f\succ g$. Such a relation would need to have the property that for every homeomorphism $\varphi\colon X\to X$, $\varphi^n\succ \varphi^m$ if and only if $n>m$. We provide such a relation in case $X=I$ in the following subsection. It is not clear how to define such a relation on more general space $X$, especially if the dimension of $X$ is greater than 1.  
	\end{remark}
	
	\subsection{Euclidean algorithm on homeomorphisms}\label{subsec:homeo}

	\begin{definition}
		Let $f,g\colon I\to I$ be (not necessarily increasing) homeomorphisms. We write $f\succ g$ if $f^2\neq g^2$ and for every $x\in I$ either:
		\begin{enumerate}
			\item $x\leq g^2(x)\leq f^2(x)$ or
			\item $f^2(x)\leq g^2(x)\leq x$.
		\end{enumerate}
	\end{definition}
	
	\begin{lemma}\label{lem:incrhomeos2}
		Assume $h\colon I\to I$ is a homeomorphism such that $h^2\neq id$. Then $h^n\succ h^m$ if and only if $n>m$.
	\end{lemma}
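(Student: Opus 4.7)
The plan is to reduce the lemma to two easy facts about $g := h^2$, which is an \emph{increasing} homeomorphism of $I$ regardless of the orientation of $h$. The definition of $\succ$ is framed entirely in terms of $h^2$-iterates, so once I pass to $g$ the question becomes one about orbits of an orientation-preserving interval homeomorphism, where monotonicity is available.

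The auxiliary fact I would prove first is: for any increasing homeomorphism $g\colon I\to I$ with $g\neq id$ and any $x\in I$, the sequence $(g^k(x))_{k\geq 0}$ is either constant (when $g(x)=x$) or strictly monotone. This follows inductively from monotonicity of $g$: if $g(x)>x$ then $g^{k+1}(x)=g(g^k(x))>g^k(x)$ for all $k$, and symmetrically if $g(x)<x$. From this I extract two consequences to be used below: (i) since $g\neq id$, there is some $x_0$ with strictly monotone orbit, and hence $g^k\neq id$ for every $k\geq 1$; equivalently, $h^{2k}\neq id$ for every $k\geq 1$, so $(h^n)^2\neq (h^m)^2$ whenever $n\neq m$; and (ii) for any $x\in I$ and any $m\leq n$, the values $x$, $h^{2m}(x)$, $h^{2n}(x)$ lie either in the ordering $x\leq h^{2m}(x)\leq h^{2n}(x)$ or in the reverse ordering $h^{2n}(x)\leq h^{2m}(x)\leq x$.

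With (i) and (ii) in hand, the forward direction $n>m\Rightarrow h^n\succ h^m$ is immediate from the definition of $\succ$. For the converse, $h^n\succ h^m$ forces $(h^n)^2\neq (h^m)^2$, and hence $n\neq m$ by (i); if $n<m$, I would pick $x_0$ with $h^2(x_0)\neq x_0$ (available since $h^2\neq id$), and WLOG $h^2(x_0)>x_0$, so (ii) yields the strict chain $x_0<h^{2n}(x_0)<h^{2m}(x_0)$, which contradicts both alternatives required by $h^n\succ h^m$ at $x=x_0$. The only mildly substantive step is (i), which also clarifies why the hypothesis must be phrased as ``$h^2\neq id$'' rather than ``$h\neq id$''; the remaining bookkeeping between strict and non-strict inequalities is absorbed cleanly by the ``constant orbit or strictly monotone orbit'' dichotomy.
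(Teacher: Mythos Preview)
Your proposal is correct and follows essentially the same approach as the paper: both arguments pass to the increasing homeomorphism $g=h^2$, use that each $g$-orbit is weakly monotone (strictly monotone off the fixed-point set), and pick a point $x_0$ with $h^2(x_0)\neq x_0$ to convert the non-strict order relations into the strict ones needed to compare $m$ and $n$ and to verify $(h^n)^2\neq(h^m)^2$. Your explicit extraction of consequences (i) and (ii), and the remark on why the hypothesis must read $h^2\neq id$, make the bookkeeping a bit cleaner than in the paper, but the mathematical content is the same.
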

	\begin{proof} Since $h^2$ is increasing, for every $x\in I$ either 
		$$x\leq h^2(x)\leq h^4(x)\leq h^6(x)\leq\ldots, \textrm{ or}$$
		$$x\geq h^2(x)\geq h^4(x)\geq h^6(x)\geq \ldots.$$
		Moreover, since $h^2\neq id$, there is $a\in I$ such that $h^2(a)\neq a$, so either 
		$$a< h^2(a)< h^4(a)< h^6(a)<\ldots, \textrm{ or}$$
		$$a> h^2(a)> h^4(a)> h^6(a)> \ldots.$$
		
		Assume that $n,m\in\N$ are such that $h^n\succ h^m$. Then $h^{2n}\neq h^{2m}$ so in particular $n\neq m$. By definition, for every $x\in I$, either $x\leq h^{2m}(x)\leq h^{2n}(x)$, or $x\geq h^{2m}(x)\geq h^{2n}(x)$. Thus, $a<h^{2m}(a)<h^{2n}(a)$, or $a>h^{2m}(a)>h^{2n}(a)$. According to the relations above, it follows that $m<n$.
		
		Similarly, if $m<n$, then for every $x\in I$, either $x\leq h^{2m}(x)\leq h^{2n}(x)$, or $x\geq h^{2m}(x)\geq h^{2n}(x)$, and, in particular, $a<h^{2m}(a)<h^{2n}(a)$, or $a>h^{2m}(a)>h^{2n}(a)$. Thus, $h^{2m}\neq h^{2n}$ and $h^n\succ h^m$.  
	\end{proof}
	
	\begin{remark}
		Homeomorphisms $h\colon I\to I$ for which $h^2=id$ are either themselves the $id$, or orientation reversing $h\colon I\to I$ which are of the form
		$$h(x)=\begin{cases}
			h_0(x), & x\in[0,c],\\
			h_0^{-1}(x), & x\in[c,1],
		\end{cases}$$
		where $c$ is the unique fixed point of $h$, and $h_0\colon [0,c]\to[c,1]$ is any homemorphism such that $h_0(0)=1$, and $h_0(c)=c$. If $h^2=id$, then, by definition, $h\not\succ id$, and $id\not\succ h$. Note that in this case $h$ and $id$ are cousins (with ancestor $h$), and $h^n=id$ for all even $n\in\N$, and $h^n=h$ for all odd $n\in\N$. 
	\end{remark}
	
	As before, we present an algorithm that decides if two homeomorphisms $f,g\colon I\to I$ are cousins, and finds their ancestor.
	
	Let $f,g\colon I\to I$ be homeomorphisms such that $f\succ g$. We define
	$$\lambda_1=f, \quad \rho_1=g.$$
	For every $i\in\N$ such that $\lambda_i\succ \rho_i$, or $\rho_i\succ \lambda_i$:
	\begin{enumerate}
		\item if $\lambda_i\succ \rho_i$, we define
		$$\lambda_{i+1}=\lambda_{i}\circ\rho^{-1}_{i}, \quad \rho_{i+1}=\rho_{i},$$ 
		\item if $\rho_i\succ\lambda_i$, we define
		$$\lambda_{i+1}=\lambda_{i}, \quad \rho_{i+1}=\rho_{i}\circ\lambda^{-1}_{i}.$$ 
	\end{enumerate}
	
	\begin{theorem}\label{thm:alg_homeos}
		Assume that $f,g\colon I\to I$ are homeomorphisms such that $f\neq g$. Then $f$ and $g$ are cousins if and only if either:
		\begin{enumerate}
			\item $f$ and $g$ commute and there is $N\in\N$ such that $\lambda_i, \rho_i$ are well defined for all $1\leq i\leq N$, and $\lambda_N=\rho_N$, or
			\item there is an orientation reversing homeomorphism $h\colon I\to I$ such that $h^2=id$, and $\{f,g\}=\{id,h\}$.
		\end{enumerate} 
	\end{theorem}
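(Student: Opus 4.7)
The plan is to adapt the proof of Theorem~\ref{thm:cousins} to the setting of homeomorphisms, substituting Lemma~\ref{lem:incrhomeos2} for Lemma~\ref{lem:hnhm2} whenever one needs to control how the relation $\succ$ behaves on iterates of a single map. Because we are now dealing with cousins rather than general kin, no auxiliary homeomorphism $h'$ appears and the algorithm is expected to terminate with $\lambda_N=\rho_N$ directly, exactly as in the cousin branch of Theorem~\ref{thm:cousins}.

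For the forward implication, suppose $f$ and $g$ are cousins with some ancestor $\varphi$, so $f=\varphi^n$ and $g=\varphi^m$ for some $n,m\in\N$. I would split into two cases according to whether $\varphi^2=\mathrm{id}$. If yes, then every iterate of $\varphi$ lies in $\{\mathrm{id},\varphi\}$, and the hypothesis $f\neq g$ forces $\{f,g\}=\{\mathrm{id},\varphi\}$ with $\varphi\neq\mathrm{id}$; since $\varphi$ is then an orientation-reversing involution of $I$, conclusion~(2) holds. If instead $\varphi^2\neq\mathrm{id}$, Lemma~\ref{lem:incrhomeos2} gives $\varphi^a\succ\varphi^b$ iff $a>b$. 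After possibly swapping $f$ and $g$ to arrange $n>m$, each iteration of the algorithm mirrors one subtraction step of the classical Euclidean algorithm applied to $(n,m)$, yielding $\lambda_i=\varphi^{n_i}$ and $\rho_i=\varphi^{m_i}$. Termination at step $N$ then gives $\lambda_N=\rho_N=\varphi^{\gcd(n,m)}$, producing conclusion~(1). Commutativity of $f$ and $g$ is automatic since both are powers of $\varphi$.

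For the backward implication, case~(2) is immediate with ancestor $h$ itself. For case~(1), I would reproduce verbatim the argument from the last paragraph of the proof of Theorem~\ref{thm:cousins} in the cousin branch: Lemma~\ref{lem:lambdarho} writes $\lambda_N=f^{\ell_f}\circ g^{\ell_g}$ and $\rho_N=g^{r_g}\circ f^{r_f}$; the sign information from Lemma~\ref{lem:ellr} allows one to apply Lemma~\ref{lem:composing} to rewrite $\lambda_N=\rho_N$ as $f^{\ell_f-r_f}=g^{r_g-\ell_g}$; Lemma~\ref{lem:diff1} delivers coprimality of these two exponents; and the cousin half of Theorem~\ref{thm:kin}, together with the identification $\lambda_N^{r_g-\ell_g}=f$ and $\lambda_N^{\ell_f-r_f}=g$, shows that $\lambda_N$ is a common ancestor.

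The main obstacle, I expect, is precisely the involution case in the forward direction. The relation $\succ$ was redefined for homeomorphisms in terms of $f^2$ and $g^2$ exactly so that Lemma~\ref{lem:incrhomeos2} would go through, but the lemma fails when the ancestor squares to the identity, and then the algorithm cannot even start (neither $f\succ g$ nor $g\succ f$). One must verify carefully that this is the only failure mode: whenever $f$ and $g$ are cousins but $f^2=g^2$, they are forced into the pencil $\{\mathrm{id},\varphi\}$ of a single orientation-reversing involution, which is precisely conclusion~(2). The fact that $\lambda_i$ and $\rho_i$ remain homeomorphisms at every step (as compositions of homeomorphisms) is immediate and ensures that the only obstruction to continuing is the equality $\lambda_i^2=\rho_i^2$, which is what makes the clean dichotomy in the theorem possible.
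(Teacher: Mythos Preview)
Your proposal is correct and follows essentially the same approach as the paper. The paper's proof is terse: it says the \emph{if} direction is identical to the cousin branch of Theorem~\ref{thm:cousins}, and for the \emph{only if} direction it splits on whether the ancestor squares to the identity (yielding case~(2)) or not (yielding case~(1) via the Euclidean algorithm, with Lemma~\ref{lem:incrhomeos2} replacing Lemma~\ref{lem:hnhm2}); your write-up just spells out these steps in more detail.
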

	\begin{proof}
		The proof of the {\em if} direction is the same as the proof of Theorem~\ref{thm:cousins}. Conversely, if $f,g$ are cousins with ancestor $h\colon I\to I$, then either $h^2=id$, in which case $\{f,g\}=\{h,id\}$, or the proof proceeds as in Theorem~\ref{thm:cousins}, but with Lemma~\ref{lem:incrhomeos2} in place of Lemma~\ref{lem:hnhm2}.
	\end{proof}
	
	\begin{remark}
		If $f,g\colon I\to I$ are both increasing homeomorphisms, then $(2)$ in Theorem~\ref{thm:alg_homeos} cannot happen. So increasing homeomorphisms $f,g\colon I\to I$ are cousins if and only if they commute and there is $N\in\N$ such that $\lambda_i, \rho_i$ are well defined for all $1\leq i\leq N$, and $\lambda_N=\rho_N$.
	\end{remark}
	
	\begin{remark}
		If $X\neq I$ is a continuum, it is not as easy to define a relation $\succ$ on homeomorphisms on $X$ which would have the property that for all homeomorphisms $\varphi\colon X\to X$, $\varphi^n\succ\varphi^m$ if and only if $n>m$. If $X=S^1$ is a circle, we might be able to lift homeomorphisms $f,g\colon S^1\to S^1$ to $\R$ and define a relation $\succ$ as on $I$, and similar construction might be possible for $X$ which are graphs. However, if $X$ is higher-dimensional, \eg a sphere, defining such a relation would be more complicated. We leave it as an open question below. 
	\end{remark}
	
	\begin{question}
		Let $X$ be a continuum. Can we find a relation $\succ$ on homeomorphisms $f,g\colon X\to X$ which has the following property: for all homeomorphisms $\varphi\colon X\to X$ and $n,m\in\N$, $\varphi^n\succ\varphi^m$ if and only if $n>m$.
	\end{question}

	\section{Diagonal kin maps}\label{sec:entropy}
	
	Recall that, given a continuum $X$ and maps $f,g\colon X\to X$, we say that $f$ and $g$ {\em strongly commute} if $g\circ f^{-1}(x)=f^{-1}\circ g(x)$ for all $x\in X$, \cite{AM-entropy}. By Theorem~\ref{thm:str_comm}, if maps $f,g\colon X\to X$ strongly commute, then the entropy of the $g$-diagonal map $\Psi\colon\varprojlim(X,f)\to\varprojlim(X,f)$ defined as $\Psi((x_0,x_1,x_2,\ldots)):=(g(x_1), g(x_{2}), g(x_{3}), \ldots),$ for all $(x_0,x_1,x_2, \ldots)\in\varprojlim(X,f)$, is equal to
	$$\Ent(\Psi)=\Ent(g\circ f^{-1}).$$
	
	\begin{lemma}\label{lem:shiftdiagonal}
		Let $f,g\colon X\to X$ be maps which strongly commute, and let $k\in\N$. We define $\Psi\colon\varprojlim(X,f)\to\varprojlim(X,f)$ as
		$$\Psi((x_0,x_1,x_2,\ldots)):=(g(x_k), g(x_{k+1}), g(x_{k+2}), \ldots).$$
		Then $\Ent(\Psi)=\Ent(g\circ f^{-k})$.
	\end{lemma}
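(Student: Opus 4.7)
The plan is to reduce to Theorem~\ref{thm:str_comm} by identifying the shift-by-$k$ diagonal map $\Psi$ on $\varprojlim(X,f)$ with the usual shift-by-one diagonal map $\Psi^*$ on the coarser inverse limit $\varprojlim(X,f^k)$. In broad strokes: I will first show that the pair $(f^k,g)$ also strongly commutes (so Theorem~\ref{thm:str_comm} applies to it and gives the desired entropy on $\varprojlim(X,f^k)$); then I will exhibit a natural homeomorphism $\pi_k\colon\varprojlim(X,f)\to\varprojlim(X,f^k)$ conjugating $\Psi$ to $\Psi^*$, and invoke Lemma~\ref{lem:conjugate}.

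For the first step, starting from $g\circ f^{-1}=f^{-1}\circ g$, I induct on $j\in\N$ to obtain $g\circ f^{-j}=f^{-j}\circ g$: the inductive step is
$$g\circ f^{-(j+1)}=(g\circ f^{-j})\circ f^{-1}=f^{-j}\circ g\circ f^{-1}=f^{-j}\circ f^{-1}\circ g=f^{-(j+1)}\circ g,$$
where the middle equality uses the base case and the fact that set-valued composition preserves unions. Taking $j=k$ shows $f^k$ and $g$ strongly commute, so Theorem~\ref{thm:str_comm} yields $\Ent(\Psi^*)=\Ent(g\circ f^{-k})$, where $\Psi^*\colon\varprojlim(X,f^k)\to\varprojlim(X,f^k)$ is defined by $\Psi^*((z_0,z_1,z_2,\ldots))=(g(z_1),g(z_2),\ldots)$.

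Next, I define $\pi_k((x_0,x_1,x_2,\ldots)):=(x_0,x_k,x_{2k},\ldots)$ and show it is a homeomorphism. Continuity is immediate from the definition of the product topology. For injectivity, if $\pi_k((x_i))=\pi_k((y_i))$, then for each $j\ge 0$ and $0\le r<k$ we have $x_{jk+r}=f^{k-r}(x_{(j+1)k})=f^{k-r}(y_{(j+1)k})=y_{jk+r}$. For surjectivity, given $(z_j)\in\varprojlim(X,f^k)$ I set $x_{jk+r}:=f^{k-r}(z_{j+1})$ for $0\le r\le k$; the identity $f^k(z_{j+1})=z_j$ makes this well-defined at block boundaries, and $f(x_i)=x_{i-1}$ holds by construction, so $(x_i)\in\varprojlim(X,f)$ and $\pi_k((x_i))=(z_j)$. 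A continuous bijection between compact Hausdorff spaces is a homeomorphism. The conjugacy is then verified by a one-line computation:
$$\pi_k(\Psi((x_i)))=(g(x_k),g(x_{2k}),g(x_{3k}),\ldots)=\Psi^*(\pi_k((x_i))).$$
Lemma~\ref{lem:conjugate} gives $\Ent(\Psi)=\Ent(\Psi^*)=\Ent(g\circ f^{-k})$.

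The main conceptual observation is the reduction: the shift-by-$k$ diagonal map on $\varprojlim(X,f)$ is precisely the shift-by-one diagonal map on $\varprojlim(X,f^k)$, viewed through the canonical identification $\pi_k$. Once this framing is in place the proof is essentially bookkeeping; the only mildly nontrivial points are iterating strong commutativity to pass from $f$ to $f^k$, and checking that the intermediate coordinates of a point in $\varprojlim(X,f)$ are uniquely determined by its $\pi_k$-image (which is what makes $\pi_k$ a bijection rather than merely a surjection).
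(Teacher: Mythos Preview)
Your proof is correct and follows essentially the same approach as the paper: both define the homeomorphism $\pi_k=H\colon\varprojlim(X,f)\to\varprojlim(X,f^k)$, $(x_i)\mapsto(x_{jk})$, conjugate $\Psi$ to the standard $g$-diagonal map on $\varprojlim(X,f^k)$, observe that $f^k$ and $g$ strongly commute, and then apply Theorem~\ref{thm:str_comm} together with Lemma~\ref{lem:conjugate}. Your write-up merely supplies a few details the paper leaves implicit (the inductive verification that $f^k$ and $g$ strongly commute, and the bijectivity of $\pi_k$).
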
 
	\begin{proof}
		Let $H\colon\varprojlim(X,f)\to\varprojlim(X, f^k)$ be a homeomorphism given by
		$$H((x_0,x_1,x_2, \ldots))=(x_0,x_k,x_{2k}, \ldots),$$
		for all $(x_0,x_1,x_2,\ldots)\in\varprojlim(X,f)$, with inverse
		$$H^{-1}((x_0,x_1,x_2, \ldots)):=(x_0,f^{k-1}(x_1), \ldots,f(x_1), x_1, f^{k-1}(x_2), \ldots, f(x_2), x_2, \ldots),$$
		for all $(x_0,x_1,x_2,\ldots)\in\varprojlim(X,f^k)$.
		We then define a map $\tilde\Psi\colon\varprojlim(X,f^k)\to\varprojlim(X,f^k)$ as 
		$$\tilde\Psi:=H\circ \Psi\circ H^{-1}.$$
		Since $\tilde\Psi$ is conjugate to $\Psi$, by Lemma~\ref{lem:conjugate}, we have $\Ent(\tilde\Psi)=\Ent(\Psi)$.
		Note that
		\begin{equation*}
			\begin{split}
				\tilde\Psi((x_0,x_1,x_2,\ldots))&=H(\Psi((x_0,f^{k-1}(x_1), \ldots,f(x_1), x_1, f^{k-1}(x_2), \ldots, f(x_2), x_2, \ldots)))\\
				&=H(g(x_1), g\circ f^{k-1}(x_2), \ldots g\circ f(x_2), g(x_2), \ldots)\\
				&=(g(x_1), g(x_2), \ldots).
			\end{split}
		\end{equation*}
		Since $f$ and $g$ strongly commute, so do $f^k$ and $g$, and thus Theorem~\ref{thm:str_comm} implies that 
		$$\Ent(\Psi)=\Ent(\tilde\Psi)=\Ent(g\circ f^{-k}).$$
	\end{proof}
	
	\begin{theorem}\label{thm:ent}
		Let $f,g\colon X\to X$ be kin, \ie there is a homeomorphism $h\colon X\to X$, an onto map $\varphi\colon X\to X$ which commutes with $h$, and $a,b\geq 0$, $n,m\in\N$ such that $f=h^a\circ \varphi^n$, and $g=h^b\circ \varphi^m$. Let $G\colon\varprojlim(X,f)\to\varprojlim(X,f)$ be the diagonal map defined as
		$$G((x_0,x_1,x_2, \ldots)):=(g(x_1), g(x_2), \ldots).$$
		Then $\Ent(G)=\Ent(h^{b-a}\circ\varphi^{m-n})$.
	\end{theorem}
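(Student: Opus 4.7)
The plan is to push the entire dynamical system $(\varprojlim(X,f), G)$ back to $\varprojlim(X,\varphi)$, where the map becomes a transparent composition of the pieces $h$ and $\varphi$, and then pull it forward again to $\varprojlim(X,\psi)$ for the appropriate $\psi$. The first step is to define
\[
\Phi\colon \varprojlim(X,\varphi)\to\varprojlim(X,f),\qquad \Phi((z_0,z_1,z_2,\ldots))=(z_0,\,h^{-a}z_n,\,h^{-2a}z_{2n},\ldots).
\]
Because $h$ commutes with $\varphi$ (and hence with $\varphi^n$), a direct check shows $f(h^{-(j+1)a}z_{(j+1)n})=h^{-ja}z_{jn}$, so the image lies in $\varprojlim(X,f)$. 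The inverse is determined by $z_{jn}=h^{ja}x_j$ together with $z_{jn+r}=\varphi^{n-r}(z_{(j+1)n})$ for $0<r<n$, which makes $\Phi$ a homeomorphism.

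The second and central step is the identification
\[
\Phi^{-1}\circ G\circ \Phi \;=\; \tilde h^{b-a}\circ \hat\varphi^{\,m-n},
\]
where $\tilde h$ is the coordinate-wise application of $h$ on $\varprojlim(X,\varphi)$ (a homeomorphism, since $h$ commutes with $\varphi$) and $\hat\varphi$ is the usual shift homeomorphism on $\varprojlim(X,\varphi)$. I verify this by tracking both the $jn$-th coordinates, where one computes directly using $g=h^b\circ\varphi^m$, and the intermediate coordinates, where the $\varphi$-compatibility of the inverse limit lets all factors of $h$ pass through powers of $\varphi$ and collect into the uniform factor $\tilde h^{b-a}$. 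Lemma~\ref{lem:conjugate} then gives $\Ent(G)=\Ent(\tilde h^{b-a}\hat\varphi^{m-n})$.

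The last step is to compare $\Ent(\tilde h^{b-a}\hat\varphi^{m-n})$ with $\Ent(h^{b-a}\circ\varphi^{m-n})$, split by the sign of $m-n$. When $m=n$, the map $\tilde h^{b-a}$ is exactly the diagonal application of the homeomorphism $h^{b-a}$ (which commutes with $\varphi$), so Theorem~\ref{thm:Ye} yields $\Ent(\tilde h^{b-a})=\Ent(h^{b-a})$. When $m>n$, set $k=m-n$ and $\psi=h^{b-a}\circ\varphi^k$; compose the canonical homeomorphism $\pi\colon \varprojlim(X,\varphi)\to\varprojlim(X,\varphi^k)$, $(z_i)\mapsto(z_{ki})$ (which conjugates $\hat\varphi^k$ to $\widehat{\varphi^k}$ and preserves $\tilde h^{b-a}$) with the homeomorphism $\varprojlim(X,\varphi^k)\to\varprojlim(X,\psi)$, $(w_i)\mapsto(h^{-(b-a)i}w_i)$ (well-defined because $h$ commutes with $\varphi$); a direct calculation shows the conjugate of $\tilde h^{b-a}\widehat{\varphi^k}$ is the standard shift $\widehat\psi$, whose entropy equals $\Ent(\psi)$ by Theorem~\ref{thm:Ye} (applied to $\psi$ commuting with itself). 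When $m<n$, the homeomorphism $\tilde h^{b-a}\hat\varphi^{m-n}$ is the inverse of $\tilde h^{a-b}\hat\varphi^{n-m}$; applying the $m>n$ case to this inverse and invoking Lemma~\ref{lem:inverse}, together with the observation that the set-valued function $h^{b-a}\circ\varphi^{m-n}$ is the inverse of the map $h^{a-b}\circ\varphi^{n-m}$, closes the case.

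The main obstacle is the coordinate bookkeeping underlying the displayed identity $\Phi^{-1}G\Phi=\tilde h^{b-a}\hat\varphi^{m-n}$: the exponents of $h$ arising from $\Phi$ grow linearly with the coordinate index, and one must verify that passing powers of $\varphi$ (positive, or in the case $m<n$ set-valued negative) through these intervening powers of $h$ produces exactly the cancellation that leaves the uniform factor $\tilde h^{b-a}$. A secondary care point is the case $m<n$, where $h^{b-a}\circ\varphi^{m-n}$ is genuinely set-valued rather than a map, so one cannot construct a direct conjugacy to an honest shift and must instead route through the inverse homeomorphism and Lemma~\ref{lem:inverse}.
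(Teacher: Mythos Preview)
Your argument is correct, and the conjugacy $\Phi^{-1}\circ G\circ\Phi=\tilde h^{\,b-a}\circ\hat\varphi^{\,m-n}$ as well as the subsequent conjugacies to $\varprojlim(X,\psi)$ all check out as claimed. The route, however, differs from the paper's in two places. First, for $m\ge n$ the paper does not pass to $\varprojlim(X,\varphi)$ at all: writing $\psi=h^{b-a}\circ\varphi^{m-n}$ one has $g=\psi\circ f$, so $G$ is already the coordinatewise map $(x_i)\mapsto(\psi(x_i))$ on $\varprojlim(X,f)$, and Theorem~\ref{thm:Ye} applies directly; this is shorter than your two-step conjugation to the shift on $\varprojlim(X,\psi)$. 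Second, for $m<n$ the paper conjugates to $\varprojlim(X,\varphi)$ exactly as you do (its $H_2\circ H_1$ is your $\Phi^{-1}$), but then finishes by invoking Lemma~\ref{lem:shiftdiagonal}, which rests on the strong-commutativity entropy formula of Theorem~\ref{thm:str_comm} from \cite{AM-entropy}. Your argument replaces that dependency by a further explicit conjugacy to $\varprojlim(X,\psi)$ together with Lemma~\ref{lem:inverse}, which makes the proof more self-contained at the cost of a bit more bookkeeping. Either endgame is legitimate; the paper's is terser where the earlier machinery is available, while yours would stand even without Theorem~\ref{thm:str_comm}.
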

	\begin{proof}
		{\bf Case~1.} Assume that $m\geq n$, and let $\psi\colon X\to X$ be the map $\psi=h^{b-a}\circ \varphi^{m-n}$. Then, since $h$ and $\varphi$ commute, we have
		$$\psi\circ f=h^{b-a}\circ\varphi^{m-n}\circ h^a\circ \varphi^n=h^b\circ\varphi^m=g.$$
		Thus, for every $(x_0,x_1,\ldots)\in\varprojlim(X,f)$, we have
		$$G((x_0,x_1,x_2,\ldots))=(g(x_1), g(x_2), \ldots)=(\psi\circ f(x_1), \psi\circ f(x_2),\ldots)=(\psi(x_0), \psi(x_1),\ldots),$$
		so Theorem~\ref{thm:Ye} implies $\Ent(G)=\Ent(\psi)=\Ent(h^{b-a}\circ\varphi^{m-n})$. 
		
		{\bf Case~2.} Let $m<n$. 
		
		We first construct a map $\tilde G\colon\varprojlim(X,\varphi)\to\varprojlim(X,\varphi)$ which is conjugate to $G$, and then compute $\Ent(\tilde G)$.
		
		Let $H_1\colon\varprojlim(X,f)\to\varprojlim(X,\varphi^n)$ be a homeomorphism given by 
		$$H_1((x_0,x_1,x_2,x_3,\ldots)):=(x_0,h^a(x_1), h^{2a}(x_2), h^{3a}(x_3),\ldots),$$
		for all $(x_0,x_1,x_2,x_3,\ldots)\in\varprojlim(X,f)$.
		Note that $h^{ia}\circ f=h^{ia}\circ h^{a}\circ\varphi^n=h^{(i+1)a}\circ\varphi^n$ for all $i\geq 0$, since $h$ and $\varphi$ commute. So, for every $i\geq 0$, $\varphi^n(h^{(i+1)a}(x_{i+1}))=h^{ia}(f(x_{i+1}))=h^{ia}(x_i)$, showing that the map $H_1$ is well-defined. The inverse of $H_1$ is
		$$H_1^{-1}((x_0,x_1,x_2,x_3,\ldots))=(x_0,h^{-a}(x_1), h^{-2a}(x_2), h^{-3a}(x_3), \ldots),$$
		for all $(x_0,x_1,x_2,x_3,\ldots)\in\varprojlim(X,\varphi^n)$.
		
		We now define $G_1\colon\varprojlim(X,\varphi^n)\to\varprojlim(X,\varphi^n)$ as
		$$G_1:=H_1\circ G\circ H_1^{-1},$$
		so $G_1$ is conjugate to $G$ and
		\begin{equation*}
			\begin{split}
				G_1((x_0,x_1,x_2,\ldots)) &=H_1(G((x_0,h^{-a}(x_1), h^{-2a}(x_2), h^{-3a}(x_3), \ldots)))\\
				&=H_1((g\circ h^{-a}(x_1), g\circ h^{-2a}(x_2), g\circ h^{-3a}(x_3), \ldots))\\
				&=(g\circ h^{-a}(x_1), g\circ h^{-a}(x_2), g\circ h^{-a}(x_3), \ldots)\\
				&=(h^{b-a}\circ\varphi^m(x_1), h^{b-a}\circ\varphi^m(x_2), h^{b-a}\circ\varphi^m(x_3), \ldots),
			\end{split}
		\end{equation*}
		for all $(x_0,x_1,x_2,x_3,\ldots)\in\varprojlim(X,\varphi^n)$.
		
		Now we define a homeomorphism $H_2\colon\varprojlim(X,\varphi^n)\to\varprojlim(X,\varphi)$ as
		$$H_2((x_0,x_1,x_2,\ldots)):=(x_0,\varphi^{n-1}(x_1), \ldots, \varphi(x_1), x_1, \varphi^{n-1}(x_2), \ldots, \varphi(x_2), x_2, \ldots),$$
		with the inverse
		$$H_2^{-1}((x_0,x_1,x_2,\ldots))=(x_0,x_n,x_{2n},\ldots).$$
		We then define a homeomorphism $\tilde G\colon\varprojlim(X,\varphi)\to\varprojlim(X,\varphi)$ as
		$\tilde G:=H_2\circ G_1\circ H_2^{-1}$, which is thus conjugated to $G_1$, and thus also to $G$. To be more precise, $\tilde G$ is given by
		\begin{equation*}
			\begin{split}
				\tilde G((x_0, x_1, x_2, \ldots))&=H_2(G_1((x_0,x_n,x_{2n},\ldots)))\\
				&= H_2((h^{b-a}\circ \varphi^m(x_n), h^{b-a}\circ\varphi^m(x_{2n}), \ldots))\\
				&=(h^{b-a}\circ\varphi^m(x_n), h^{b-a}\circ\varphi^{m+n-1}(x_{2n}), \ldots, h^{b-a}\circ\varphi^{m+1}(x_{2n}), h^{b-a}\circ\varphi^m(x_{2n}), \ldots)\\
				&=(h^{b-a}(x_{n-m}), h^{b-a}(x_{n-m+1}), h^{b-a}(x_{n-m+2}), \ldots).
			\end{split}
		\end{equation*}
		Note that, since $h$ is a homeomorphism which commutes with $\varphi$, then $h^{b-a}$ and $\varphi$ strongly commute. To compute the entropy of $\tilde G$, we use Lemma~\ref{lem:shiftdiagonal} with $f=\varphi$, $g=h^{b-a}$, and $k=n-m$. We conclude that $\Ent(G)=\Ent(\tilde G)=\Ent(h^{b-a}\circ \varphi^{m-n})$.
	\end{proof}

	\begin{question}
		Can we, in general, given a homeomorphism $h\colon X\to X$, and an onto map $\varphi\colon X\to X$ which commutes with $h$, compute $\Ent(h\circ\varphi)$ in terms of $h$ and $\varphi$?
	\end{question}
	
	\begin{corollary}
		Assume $\varphi, h\colon I\to I$ are commuting interval maps, such that $\varphi$ is piecewise monotone, and $h$ is a homeomorphism. Let $a,b\geq 0$, $n,m\in\N$, and $f,g\colon I\to I$ be maps such that $f=h^a\circ\varphi^n$, and $g=h^b\circ\varphi^m$. Let $G\colon\varprojlim(I,f)\to\varprojlim(I,f)$ be a map defined by
		$$G((x_0,x_1,x_2, \ldots))=(g(x_1), g(x_2),\ldots),$$
		for all $(x_0,x_1,x_2,\ldots)\in\varprojlim(I,f)$. Then
		$$\Ent(G)=|m-n|\cdot\Ent(\varphi).$$
	\end{corollary}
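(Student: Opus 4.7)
The plan is to reduce the statement directly to Theorem~\ref{thm:ent} and then analyze $\Ent(h^{b-a}\circ\varphi^{m-n})$ by splitting into three cases according to the sign of $m-n$. Specifically, Theorem~\ref{thm:ent} immediately gives $\Ent(G)=\Ent(h^{b-a}\circ\varphi^{m-n})$, so everything reduces to showing that this quantity equals $|m-n|\cdot\Ent(\varphi)$.

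First I would handle the case $m>n$. Here $\varphi^{m-n}$ is a genuine (not set-valued) piecewise monotone map, and $h^{b-a}$ is a homeomorphism of $I$ (a composition of $h$ or $h^{-1}$ with itself), which commutes with $\varphi^{m-n}$ since $h$ commutes with $\varphi$. Applying Lemma~\ref{lem:entropy} gives $\Ent(h^{b-a}\circ\varphi^{m-n})=\Ent(\varphi^{m-n})$, and then Lemma~\ref{lem:power} yields $\Ent(\varphi^{m-n})=(m-n)\Ent(\varphi)=|m-n|\Ent(\varphi)$.

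Next, for $m<n$ the function $\varphi^{m-n}$ is genuinely set-valued (the $(n-m)$-th iterate of $\varphi^{-1}$), so Lemma~\ref{lem:entropy} does not apply directly. The plan is to pass to the inverse set-valued function and invoke Lemma~\ref{lem:inverse}. Since $h^{b-a}$ is a homeomorphism, one has $(h^{b-a}\circ\varphi^{m-n})^{-1}=\varphi^{n-m}\circ h^{a-b}$, which is a composition of the piecewise monotone map $\varphi^{n-m}$ with the homeomorphism $h^{a-b}$, and these commute. Thus
\begin{equation*}
\Ent(h^{b-a}\circ\varphi^{m-n})=\Ent(\varphi^{n-m}\circ h^{a-b})=\Ent(\varphi^{n-m})=(n-m)\Ent(\varphi)=|m-n|\Ent(\varphi),
\end{equation*}
using Lemma~\ref{lem:inverse}, Lemma~\ref{lem:entropy}, and Lemma~\ref{lem:power} in turn. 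Finally, for $m=n$ one has $\Ent(G)=\Ent(h^{b-a})$, and every homeomorphism of $I$ is piecewise monotone with one ``monotone piece," so $C((h^{b-a})^k)=1$ for all $k\in\N$, and Theorem~\ref{thm:Mis} gives $\Ent(h^{b-a})=0=|m-n|\Ent(\varphi)$.

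I do not expect any major obstacle: the key observation, which glues everything together, is that a negative power of $\varphi$ creates a set-valued function, but its inverse is an honest piecewise monotone map, so Lemma~\ref{lem:inverse} exactly converts the set-valued setting back into the single-valued setting where Lemmas~\ref{lem:entropy} and~\ref{lem:power} apply. The only mild care needed is verifying that $h^{b-a}$ (which may be a negative power of $h$) is still a homeomorphism commuting with the relevant power of $\varphi$, which is immediate from $h$ being a homeomorphism commuting with $\varphi$.
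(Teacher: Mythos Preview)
Your proposal is correct and follows essentially the same route as the paper: invoke Theorem~\ref{thm:ent}, then in the case $m\ge n$ apply Lemma~\ref{lem:entropy} and Lemma~\ref{lem:power} directly, and in the case $m<n$ first pass to the inverse via Lemma~\ref{lem:inverse} before applying those same lemmas. The only cosmetic difference is that the paper folds $m=n$ into the $m\ge n$ case (since $\varphi^0=id$ is piecewise monotone), whereas you treat it separately via Theorem~\ref{thm:Mis}.
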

	\begin{proof}
		By Theorem~\ref{thm:ent}, we have $\Ent(G)=\Ent(h^{b-a}\circ\varphi^{m-n}).$ 
		
		If $m\geq n$, then $\varphi^{m-n}$ is a piecewise monotone map. Therefore, since $h^{b-a}$ is a homeomorphism, Lemma~\ref{lem:entropy} implies that $\Ent(h^{b-a}\circ\varphi^{m-n})=\Ent(\varphi^{m-n})$. Lemma~\ref{lem:power} then implies that $\Ent(\varphi^{m-n})=(m-n)\Ent(\varphi).$
		
		If $m<n$, then $\varphi^{n-m}$ is a piecewise monotone map. Since $\varphi^{m-n}=(\varphi^{n-m})^{-1}$, and $h$ and $\varphi$ commute, we have $\Ent(h^{b-a}\circ\varphi^{m-n})=\Ent((h^{a-b}\circ\varphi^{n-m})^{-1})=\Ent(h^{a-b}\circ\varphi^{n-m})$, where the last equality follows from Lemma~\ref{lem:inverse}. Again by Lemma~\ref{lem:entropy} and Lemma~\ref{lem:power}, we have $\Ent(h^{a-b}\circ\varphi^{n-m})=\Ent(\varphi^{n-m})=(n-m)\Ent(\varphi)$.
		
		Thus, $\Ent(G)=|m-n|\cdot\Ent(\varphi)$.
	\end{proof}
	
	The next example comes from the Senior Thesis of Anna Cole \cite{Coleslaw} and motivates the study of commuting piecewise monotone interval maps and homeomorphisms. We will continue this study in an upcoming paper \cite{AM-preprint}.
	
	\begin{example}\label{ex:Anna} 
		Let \[\varphi(x)= \begin{cases}
			2x & \mbox{ if $ 0\leq x< \frac{1}{2} $}\\
			\frac 32-x & \mbox{ if $ \frac{1}{2} \leq x  \leq 1 $},\\
		\end{cases}
		\]
		and let $\eta\colon\left[\frac 12,1\right]\to\left[\frac 12,1\right]$ be any homeomorphism for which $\eta(\frac 12)=\frac 12$, $\eta(1)=1$, and $\eta(\frac 32-x)=\frac 32-\eta(x)$ for all $x\in\left[\frac 12,1\right]$.
		
		We define a homeomorphism $h\colon I\to I$ as:
		\[h(x)= \begin{cases}
			0 & \mbox{ if $ x=0$}\\
			
			\frac{\eta(2^nx)}{2^n} & \mbox{ if $ \frac{1}{2^{n+1}} \leq x \leq \frac{1}{2^n} $ and $ n\geq 0$.}
		\end{cases}
		\]
		
		See Figure~\ref{fig:Anna}.
		
		We now show that $\varphi$ and $h$ commute. First, $\varphi\circ h(0)=0=h\circ \varphi(0)$. Next, assume $x\in\left[\frac 12,1\right]$, so $h(x)=\eta(x)\in\left[\frac 12,1\right]$, and $\varphi(x)=\frac 32-x\in\left[\frac 12,1\right]$. Thus $\varphi\circ h(x)=\varphi(\eta(x))=\frac 32-\eta(x)$, and $h\circ \varphi(x)=\eta(\frac 32-x)$. Since we assumed $\eta(\frac 32-x)=\frac 32-\eta(x)$ for all $x\in\left[\frac 12,1\right]$, we get $\varphi\circ h(x)=h\circ \varphi(x)$ for all $x\in\left[\frac 12,1\right]$. Finally, let $n\geq 1$, and $x\in\left[\frac{1}{2^{n+1}},\frac{1}{2^n}\right]$. Then $h(x)=\frac{\eta(2^nx)}{2^n}\in\left[\frac{1}{2^{n+1}},\frac{1}{2^n}\right]$, and $\varphi(x)=2x\in\left[\frac{1}{2^n},\frac{1}{2^{n-1}}\right]$. Thus, $\varphi\circ h(x)=\frac{\eta(2^nx)}{2^{n-1}}=\frac{\eta(2^{n-1}\cdot 2x)}{2^{n-1}}=h(2x)=h(\varphi(x))$.
		
		Let $a,b\geq 0$, and $n,m\in\N$, and let $f=h^a\circ \varphi^n$, and $g=h^b\circ\varphi^m$. Then $f$ and $g$ are kin (which are not cousins unless $a=b=0$). Given a map $G\colon\varprojlim(I,f)\to\varprojlim(I,f)$, $G((x_0,x_1,x_2,\ldots))=(g(x_1),g(x_2),\ldots)$, we get
		$$\Ent(G)=|m-n|\cdot\Ent(\varphi).$$
		Since $\Ent(\varphi)=0$, we get $\Ent(G)=0$.
	\end{example}
	
	\begin{figure}[ht!]
		\begin{tikzpicture}[scale=8]
			\draw (0,0)--(0,1)--(1,1)--(1,0)--(0,0);
			\draw[dashed, thin] (0,0)--(1,1);
			
			\newcommand*\List{{1/256, 1/128, 1/64, 1/32, 1/16, 1/8, 1/4, 1/2, 1}}
			\foreach \i in {0,...,6} {
				\draw[dashed] (\List[\i],\List[\i+1]) rectangle (\List[\i+1],\List[ \i+2]);
			}			\foreach \i in {0,...,7} {
				\draw[dashed] (\List[\i],\List[\i]) rectangle (\List[\i+1],\List[ \i+1]);
			}
			
			\draw[red,thick] (0,0)--(0.5,1)--(1,0.5);
			
			\foreach \i in {0,...,7}{
				\pgfmathparse{\List[\i]}
				\let\b\pgfmathresult		
				\pgfmathparse{\List[\i+1]}
				\let\c\pgfmathresult
				\pgfmathparse{(\c - \b)/4}
				\let\r\pgfmathresult
				\draw[thick, blue] (\List[\i], \List[\i])--(\List[\i]+\r, \List[\i]+1/3*\List[\i])--(\List[\i]+\r+\r+\r, \List[\i]+2/3*\List[\i])--(\List[\i+1], \List[\i+1]);
			}	
		\end{tikzpicture}
		\caption{Maps $\varphi\colon I\to I$ (in red) and $h\colon I\to I$ (in blue) from Example~\ref{ex:Anna}. Here, $\eta\colon[\frac 12,1]\to[\frac 12,1]$ is given by $\eta(\frac 12)=\frac 12$, $\eta(\frac 58)=\frac 23$, $\eta(\frac 78)=\frac 56$, $\eta(1)=1$, and extended by linearity. Note that $\eta(\frac 32-x)=\frac 32-\eta(x)$ for all $x\in[\frac 12,1]$, and $\eta$ is a homeomorphism.}
		\label{fig:Anna}
	\end{figure}
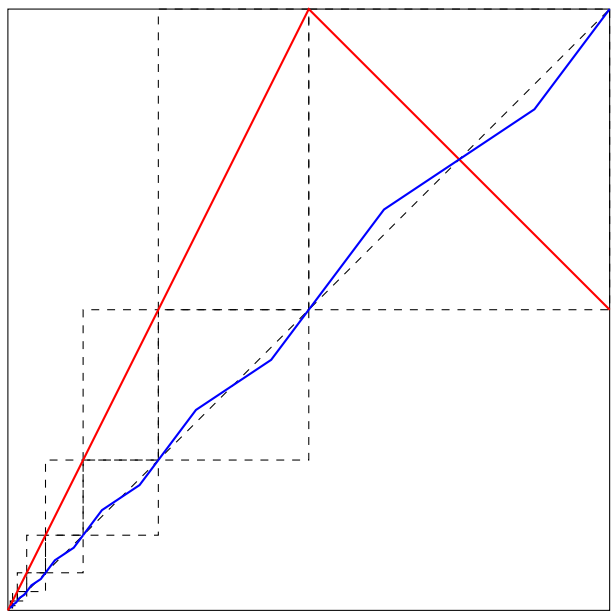

\begin{thebibliography}{A}
		
		\bibitem{Adler} R.\ Adler, A.\ Konheim,  M.\ McAndrew,   {\em  Topological Entropy}, Trans. A.M.S. {\bf 114} (2) (1965), 309--319. 
		
		\bibitem{AliKoo} A.\ Alikhani-Koopaei, {\em On common fixed points, periodic points and recurrent points of continuous functions}, Int.\ J.\ Math.\ {\bf 39} (2003) 2465--2473.
		
		\bibitem{AM-entropy} A.\ Anu\v si\'c, C.\ Mouron, {\em Topological entropy of diagonal maps on inverse limit spaces}, Topol. Methods Nonlinear Anal. {\bf 59} (2022), 867--895. 
		
		\bibitem{AM-commuting} A.\ Anu\v si\'c, C.\ Mouron, {\em Strongly commuting interval maps}, Fund.\ Math. {\bf 257} (2022), 39--68.
		
		\bibitem{AM-preprint} A.\ Anu\v si\'c, C.\ Mouron, {\em Interval maps which commute with homeomorphisms}, preprint 2025.
		
		\bibitem{Baxter} G.\ Baxter, {\em On fixed points of the composite of commuting functions}, Proc.\ Amer.\ Math.\ Soc.\ {\bf 15} (1964), 851--855.
		
		\bibitem{Bellamy80} D.\ P.\ Bellamy, {\em  tree-like continuum without the fixed-point property}, Houston J.\ Math.\ {\bf 6} (1980), no. 1, 1--13.
		
		\bibitem{Bing69} R.\ H.\ Bing, {\em The elusive fixed point property}, Amer.\ Math.\ Monthly {\bf 76} (1969), 119--132.
		
		\bibitem{Boronski} J.\ Boronski, {\em A note on fixed points of abelian actions in dimension one}, Proc.\ Amer.\ Math.\ Soc.\ {\bf 147} (2019) (4), 1653--1655. 
		
		\bibitem{Boyce} W.\ M.\ Boyce, {\em Commuting functions with no common fixed point}, Trans.\ Amer.\ Math.\ Soc.\ {\bf 137} (1969), 77--92.
		
		\bibitem{RFBrown} R.\ F.\ Brown, {\em A good question won't go away: An example of mathematical research}, American Mathematical Monthly, \black{ {\bf 128(1)} (2021), 62--68.}
		
		\bibitem{CanovasLinero} J.\ C\'anovas, A.\ Linero, {\em On the dynamics of compositions of commuting interval maps}, J.\ Math.\ Anal.\ Appl.\ {\bf 305} (2005), 296--303.
		
		\bibitem{Cohen} H.\ Cohen, {\em On fixed points of commuting functions}, Proc.\ Amer.\ Math.\ Soc.\ {\bf 15} (1964), 293--296.
		
		\bibitem{Coleslaw} A.\ Cole, {\em Commutativity of nonopen and open maps}. Senior Thesis, Rhodes College (2022).
		
		\bibitem{Folkman} J.\ H.\ Folkman, {\em On functions that commute with full functions}, Proc.\ Amer.\ Math.\ Soc.\ {\bf 17} (1966), 383--386.
		
		\bibitem{GrincSnoha} M.\ Grin\v c, L.\ Snoha, {\em Jungck theorem for triangular maps and related results}, Appl.\ Gen.\ Topol.\ {\bf 1} (1) (2000) 83--92.
		
		\bibitem{Hagopian07} C. L. Hagopian, {\em An update on the elusive fixed-point property}, Open problems in topology II (Elliott Pearl, ed.), Elsevier B. V., Amsterdam, 2007, pp. 263--277.
		
		\bibitem{HoHerGut} L.\ Hoehn, R.\ Hern\'andez-Guti\'errez, {\em A fixed-point-free map of a tree-like continuum induced by bounded valence maps on trees}, Colloq.\ Math.\ {\bf 151} (2018), no. 2, 305--316.
		
		\bibitem{Huneke} J.\ P.\ Huneke, {\em On common fixed points of commuting continuous functions on an interval}, Trans.\ Amer.\ Math.\ Soc.\ {\bf 139} (1969), 371--381.
		
		\bibitem{Isbell} J.\ R.\ Isbell, {\em Research problems: Commuting wrappings of trees}, Bull.\ Amer.\ Math.\ Soc.\ {\bf 63} (1957), no. 6, 419.
		
		\bibitem{Joichi} J.\ T.\ Joichi, {\em On functions that commute with full functions and common fixed points}, Nieuw Arch.\ Wisk.\ (3) {\bf 14} (1966), 247--251.
		
		\bibitem{KelTen} J.\ P.\ Kelly, T.\ Tennant, {\em Topological entropy of set-valued functions}, Houston J.\ Math.\ {\bf 43} (1) (2017), 263--282.
		
		\bibitem{Linero} A.\ Linero, {\em Common fixed points for commuting Cournot maps}, Real Anal.\ Exchange {\bf 28} (1) (2002/3), 121--143.
		
		\bibitem{McDowell} E.\ L.\ McDowell, {\em Coincidence Values of Commuting Functions}, Top.\ Proc.\ {\bf 34} (2009), 365--384.
		
		\bibitem{MisSl} M.\ Misiurewicz, W.\ Szlenk, {\em Entropy of piecewise monotone mappings}, Studia Math.\ {\bf 67} (1980), 45--63. 
		
		\bibitem{Mou} C.\ Mouron, {\em Dynamics of commuting homeomorphisms of chainable continua}, Colloq.\ Math.\ {\bf 121} (2010), 63--77.
		
		\bibitem{OvRog} L.\ G.\  Oversteegen, J.\ T.\ Rogers, Jr., {\em An  inverse  limit description of an atriodic tree-like continuum and an induced map without a fixed point}, Houston J.\ Math.\ {\bf 6} (1980), no. 4, 549--564.
		
		\bibitem{OvRog2} L.\ G.\  Oversteegen, J.\ T.\ Rogers, Jr., {\em Fixed-point-free maps on tree-like continua}, Topology Appl.\ {\bf 13} (1982), no. 1, 85--95. 
		
		\bibitem{Ritt} J.\ F.\ Ritt, {\em Permutable rational functions}, Trans.\ Amer.\ Math.\ Soc.\ {\bf 25} (1923), no. 3, 399--448.
		
		\bibitem{Walters} P.\ Walters, {\em An Introduction to Ergodic Theory}, Graduate Texts in Math. {\bf 79} (1982) Springer, New
		York.
		
		\bibitem{Ye} X.\ Ye, {\em Topological entropy of the induced maps of the inverse limits with bonding maps}, Topology Appl.\ {\bf 67} (1995), 113--118.
		
	\end{thebibliography}
\end{document}